\documentclass[reqno]{amsart}
\input{FancyMath.sty}
\usepackage{newunicodechar}
\usepackage{standalone}
\usepackage[utf8]{inputenc}
\usetikzlibrary{arrows.meta}

\title[A ribbon partial order for links \& minimality detection via Heegaard Floer]
{
A ribbon partial order for links and \\ minimality detection via Heegaard Floer
}

\makeatletter
\@namedef{subjclassname@1991}{2020 Mathematics Subject Classification}
\makeatother

\author{\sc Gary D.~Dunkerley}
\address{Department of Mathematics, University of Georgia}
\email{garydunkerley@uga.edu}

\date{\today}

\keywords{
strong ribbon concordance,
ribbon concordance minimality, 
link Floer homology
knot and link detection
}
\subjclass[2020]{Primary 57K10; Secondary 57K18}

\begin{document}

\begin{abstract}
We prove that strong ribbon concordance induces a partial order on links in $S^3$, extending a theorem of Agol. Using results from knot Floer homology, we certify minimality under the ribbon partial order for a handful of knots and give the first examples of ribbon minimal knots that are not transfinitely nilpotent, resolving a question of Tagami. Using a mixture of classical techniques and recent Heegaard Floer detection results for links, we give several infinite families of links whose members are minimal under the partial order induced by strong ribbon concordance.
\end{abstract}

\maketitle

\tableofcontents

\section{Introduction}
\subsection{A partial order from strong ribbon concordance}
Two oriented links $L_0$ and $L_1$ in $S^3$ are \textit{strongly concordant} if there is a disjoint union of $n$ smooth, properly embedded annuli in $S^3 \times I$ going between them
$$
\begin{gathered}
C : \bigsqcup_{i=1}^{n} S^{1} \times I \hookrightarrow S^{3} \times I \\ C \cap (S^3\times \{0\})= L_0 \qquad C \cap (S^3 \times \{1\}) = \overline{L_1}
\end{gathered}
$$
where $\overline{L_1}$ denotes the mirror of $L_1$.
Morse theory allows one to specify embedded surfaces constructively as a union of 0-handles (minima), 1-handles (saddles), and 2-handles (maxima). 
We call $C$ a \textit{strong ribbon concordance} if it admits a handle decomposition consisting solely of  0- \& 1-handles or, equivalently, if there is a Morse function on ${S^3 \times I}$ whose restriction to the image of $C$ has no local maxima. In particular, reversing the direction of a non-trivial, strong ribbon concordance gives another strong concordance, but one that is not ribbon.
In the setting of knots, this asymmetry is reflected in many measures of complexity. If there is a ribbon concordance from ${ J }$ to ${ K }$, then...

\begin{itemize}
\item ...the Alexander polynomial for $J$ divides that of $K$. \cite{Gil84}
\item ...the Seifert genus of $K$ is greater than or equal to that of $J$. \cite{OS04a} \cite{Zem19a}
\item ...a quotient of $\pi_1(S^3 \setminus \nu K )$ contains $\pi_1(S^3 \setminus \nu J)$ as a subgroup. \cite{Gor81}
\item ...if $K$ is fibered, then so is $J$. \cite{Miy18}
\item ...the ribbon concordance induces an injection of the knot Floer and Khovanov homology of $J$ into that of $K$. \cite{Zem19a} \cite{LZ19}
\end{itemize}

We write ${ L' \leq L }$ if there exists a strong ribbon concordance from ${ L' }$ to ${ L }$. It is not hard to see that $\leq$ is a preorder on the set of knots in $S^3$. Gordon \cite{Gor81} conjectured the preorder $\leq$ is actually a partial order; this was recently proven by Agol \cite{Agol22}. We extend this result to the multicomponent case.

\begin{theorem}\label{thm:linkRibbonOrder}
Strong ribbon concordance induces a partial order ${ (\leq) }$ on oriented links in $S^3$.
\end{theorem}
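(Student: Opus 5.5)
Reflexivity and transitivity are formal, so the real content is antisymmetry. For reflexivity we take the product concordance $L \times I$, which has no critical points at all. For transitivity we stack a strong ribbon concordance from $L$ to $L'$ on top of one from $L'$ to $L''$ and rescale the $I$-coordinate. The union is again a disjoint union of annuli pairing components, since each component of $L$ is joined to a unique component of $L'$ and then to a unique component of $L''$. The stacked Morse function has only index $0$ and $1$ critical points on the surface. So the plan is to prove: if $L \le L'$ and $L' \le L$, then $L$ and $L'$ are isotopic as oriented links.

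For antisymmetry I would follow Agol's argument for knots \cite{Agol22} and check that each ingredient survives with several components. Agol's proof goes roughly as follows. A ribbon concordance $C$ from $J$ to $K$ gives, via Gordon \cite{Gor81}, a surjection $\pi_1(S^3\setminus \nu K)\to \pi_1((S^3\times I)\setminus \nu C)$ and an injection of $\pi_1(S^3\setminus\nu J)$ into the latter. Agol then uses representation varieties into $SU(2)$, or more precisely the character-variety ``size'' together with the residual finiteness and profinite arguments, to show that a cycle $J\le K\le J$ forces all these maps to be isomorphisms. From this he concludes that the concordance is a product.

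I would verify each step for a multi-component $C$.
\begin{enumerate}
\item Gordon's $\pi_1$ results use only the handle structure of the complement of a ribbon surface. The complement of $C$ is built from the complement of $L_0$ by adding, for each $0$-handle/$1$-handle pair, a $1$-handle and a $2$-handle. This works verbatim for links.
\item The counting/representation-variety argument applies to any finitely generated residually finite group. Link groups are residually finite, since $3$-manifold groups are residually finite.
\item The final step deduces from $\pi_1$-isomorphisms that the ribbon concordance has no $0$-$1$ handle pairs and is isotopic to a product. For knots Agol uses that a ribbon concordance with $\pi_1$-isomorphic ends is a product, via Gordon's result and the classification of the ends by their complements together with peripheral data.
\end{enumerate}

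Step (3) is the main obstacle. For links the ends must be identified as oriented links, not merely as complements, and the component correspondence must be respected. To handle it, I would first use a numerical invariant to rule out extra handles. For this I would use the injectivity of link Floer homology under ribbon concordance (Zemke's argument extends to links, giving $\widehat{HFL}(L_0)$ as a summand of $\widehat{HFL}(L_1)$). Then $L\le L'\le L$ forces equal total ranks, so each inclusion is an isomorphism. Combining this with Agol's conclusion that the complement of $C$ is a product $h$-cobordism with isomorphic peripheral structure, I would show that the number of $0$-handles must be zero. If that fails, I would fall back on Agol's original route: show that the composite self-concordance $L\to L'\to L$ induces an automorphism of the profinite completion preserving meridians componentwise, and conclude as he does that the ribbon disks in each stage are trivial. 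The sanity check here is that each band must reconnect a trivial unknot component to the same annulus, so the componentwise meridian data is preserved.
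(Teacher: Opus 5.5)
Reflexivity, transitivity, and your steps (1)--(2) are fine and match the opening of the paper's Case~i. Gordon's lemma together with Agol's $SO(N)$ representation-variety argument gives $\pi_1(S^3\setminus\nu L)\cong\pi_1(X)$ for the complement $X$ of the composite self-concordance $L\to L'\to L$. Hence also $\pi_1(S^3\setminus\nu L)\cong\pi_1(X_1)$, where $X_1$ is the complement of $C'\colon L'\to L$. You are also right that residual finiteness holds for all link groups.

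\textbf{The first gap is your step (3).} It misidentifies what has to be proved. Neither Gordon nor Agol shows that the concordance is a product or has no $0$/$1$-handle pairs. For a given concordance that is false, since one can always insert a cancelling birth/band pair, and it is far more than is needed. Your $\widehat{\mathrm{HFL}}$ replacement does not help. Equal total ranks only show that Zemke's maps are isomorphisms. That says nothing about handles, and it cannot give $L=L'$ because $\widehat{\mathrm{HFL}}$ is not a complete link invariant. The ``profinite automorphism'' fallback is not an argument.

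What is actually needed is a 3-manifold step:
\begin{enumerate}
\item Gordon's lemma gives an injection $\pi_1(S^3\setminus\nu L')\hookrightarrow\pi_1(X_1)\cong\pi_1(S^3\setminus\nu L)$.
\item You must show this injection respects the peripheral structure \emph{component by component}. The paper does this by choosing a disk in the band diagram that meets each component of $L'$ in a single unnested arc and misses all handles. Over this disk $C'$ is a product, which yields disks in $X_1$ identifying the $j$th meridian and longitude of $L'$ with those of the corresponding component of $L$.
\item Waldhausen's theorem then turns this into a covering map of complements. The covering must have degree one, i.e.\ it is a meridian-preserving homeomorphism, and hence $L=L'$.
\end{enumerate}
Your ``sanity check'' about bands reconnecting unknots does not establish the componentwise peripheral matching in step 2.

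\textbf{The second gap is split links, which your proposal never addresses.} Waldhausen's theorem requires Haken, in particular irreducible, complements, so the argument above only works for non-split links. The paper treats split links separately:
\begin{itemize}
\item Proposition~\ref{prop:splitGeneric} shows that a strong ribbon concordance ending at a split link can be isotoped to be split by a product $S^2\times I$. Consequently, $L\le L'\le L$ with one link split forces both to be split.
\item The splitting spheres can be matched, so the composite self-concordance is split.
\item One then cuts along $S^2\times I$, caps off each side with $B^3\times I$, and inducts on the number of components. The non-split argument and Agol's theorem for knots handle the pieces.
\end{itemize}
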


\begin{remark}\label{rem:weakConcordance}
There is an alternative notion of link concordance that appears in the literature---two links are called \textit{weakly concordant} if there is a smooth, connected surface of genus-0 between them which is properly embedded in $S^{3}\times I$. One might say two links are ``\textit{weakly ribbon concordant}" if there is a weak concordance which is also a ribbon surface. Weak ribbon concordance gives a preorder on the set of links which is not antisymmetric and therefore not a partial order, see Figure \ref{fig:weakConcordanceBandDiagram}.
\end{remark}

\subsection{Ribbon minimality detection} 
We say that a link $L$ in ${ S^{3} }$ is \textit{ribbon minimal} if $L' \leq L$ implies $L' = L$.
There are many natural questions one could ask about ribbon minimal knots and links.
\begin{question}\label{quest:generalizedSliceRibbonConj}
Given a link ${ L }$, how many ribbon minima are in its smooth concordance class? In particular...
\begin{enumerate}
\item\label{itm:AtLeastOne} Does every smooth link concordance class contain a ribbon minimum?
\item\label{itm:MoreThanOne} Are there any smooth link concordance classes with two or more ribbon minima?
\end{enumerate}
\end{question}

In the knot setting, there has been substantial progress on Question \ref{quest:generalizedSliceRibbonConj}.\ref{itm:AtLeastOne}, see \cite{BS25b} and \cite{BHS26}. 
The restriction of Question \ref{quest:generalizedSliceRibbonConj}.\ref{itm:MoreThanOne} to smooth concordance classes of knots is sometimes called the \textit{Generalized Slice-Ribbon Conjecture}. It is presently unclear if the question becomes tractable even when expanded to \textit{any} strong concordance class of links, but if it were, a path to the answer likely passes through a humbler question.

\begin{question}\label{question:minima}
What knots and links are minimal with respect to the strong ribbon concordance partial order?
\end{question}

Gordon \cite{Gor81} gave the first minimality certification result for knots, proving  that knots which are \textit{transfinitely nilpotent} (Definition \ref{def:TransNilp}) and $\mathbb{Q}$\textit{-anisotropic} (Definition \ref{def:Qaniso}) are ribbon concordance minimal. 
Most previously known ribbon minimal knots are either verified as such using Gordon's criterion or confirmed by other means but conjectured to satisfy it regardless. 
Tagami classified which 2-bridge knots are ribbon minimal, yielding examples of ribbon minimal knots which satisfy one of Gordon's conditions but not the other, see Example \ref{exmp:TagamiQisotropic}.
Tagami also asked (\cite{Tag23}, Question 3.3) if there are examples of ribbon concordance minima which are not transfinitely nilpotent. We resolve Tagami's question in the affirmative.

\begin{theorem}\label{thm:minimalKnots}
The knots $Wh^{+}(T_{2,3},2)$, ${ Wh^{-}(T_{2,3},2) }$, and ${ 15n_{43522} }$ are ribbon concordance minimal.
\end{theorem}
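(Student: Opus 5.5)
Suppose $J \leq K$ where $K$ is one of $Wh^{\pm}(T_{2,3},2)$ or $15n_{43522}$. The plan is to show that $J$ has the same knot Floer homology as $K$, and then use a knot Floer detection result to conclude $J = K$. The listed ribbon obstructions control $J$: Zemke's theorem makes $\widehat{HFK}(J)$ a summand of $\widehat{HFK}(K)$, bigrading preserved, so its total rank is at most that of $K$. Gilmer's result gives $\Delta_J \mid \Delta_K$. Seifert genus satisfies $g(J)\le g(K)$, and Miyazaki's theorem says that if $K$ is fibered, so is $J$.

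The first step is to record $\widehat{HFK}$ of each target knot, which is small.
\begin{itemize}
\item For the twisted Whitehead doubles $Wh^{\pm}(T_{2,3},2)$, the satellite formula of Hedden gives genus $1$ and $\Delta = 1$ or a small genus-one polynomial (for twist parameter $2$, $\Delta_K = 2t-3+2t^{-1}$ or its analogue, up to sign conventions).
\item The top group $\widehat{HFK}(K,1)$ has low rank, e.g.\ $2$.
\end{itemize}
Since $g(J)\le 1$ and $\Delta_J\mid\Delta_K$ with $\Delta_K$ irreducible, either $\Delta_J=1$ or $\Delta_J=\Delta_K$.

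The second step is to rule out the degenerate case $\Delta_J=1$. If $J$ is trivial, then $K$ is ribbon, hence slice. That is impossible because $\tau(K)\neq 0$ (Hedden computes $\tau(Wh^{+}(T_{2,3},t))=1$ for $t<1$), or because $\Delta_K$ is not of Fox–Milnor form. If $J$ is nontrivial of genus $1$, its top Floer group must inject into that of $K$ at the same Alexander grading. The ranks and Maslov gradings, together with the Euler characteristic constraint $\chi=\Delta_J$, then force $\widehat{HFK}(J)$ to equal all of $\widehat{HFK}(K)$ or else produce a contradiction. Once $\Delta_J=\Delta_K$, the Euler characteristic and the injection already give equal ranks in each Alexander grading wherever the count is tight.

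The third step applies detection. Genus-one knots with $\widehat{HFK}$ of total rank at most some small bound have been classified (Baldwin–Dey–Vela-Vick, Binns–Dey, and Baldwin–Sivek type results). Twisted Whitehead doubles of the trefoil with these Floer groups are among the knots that detection theorems pin down, since the genus-one top group of rank $2$ forces $J$ to be a specific satellite. For $15n_{43522}$ I would do the analogous computation with SnapPy/HFK calculator data. I expect it has a distinctive $\widehat{HFK}$, for instance thin with a small top group, possibly of a form known to be detected, and I would invoke that detection result.

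The main obstacle is the case analysis in the second step: excluding proper summands $\widehat{HFK}(J)\subsetneq\widehat{HFK}(K)$ that are realized by some other knot. For each candidate bigraded subgroup, I would first check symmetry ($\widehat{HFK}(J,a)\cong\widehat{HFK}(J,-a)$ with Maslov shift), genus detection (the top group is nonzero in grading $g(J)$), and fiberedness detection (the top group has rank $1$ iff fibered). Rank parity and $\delta$-grading constraints should eliminate what remains. If a candidate survives, I would fall back on Gordon's group-theoretic injection $\pi_1(S^3\setminus\nu J)\hookrightarrow$ a quotient of $\pi_1(S^3\setminus\nu K)$, or on Khovanov homology injectivity, to exclude it.
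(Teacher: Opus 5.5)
Your overall strategy (Zemke injectivity, Gilmer divisibility, Euler-characteristic bookkeeping, then $\widehat{\mathrm{HFK}}$ detection) is the paper's. However, the step ruling out $J=U$ fails for $Wh^{+}(T_{2,3},2)$.

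Its Alexander polynomial is $-2t+5-2t^{-1}\doteq(2t-1)(t-2)$, not $2t-3+2t^{-1}$. This polynomial is reducible, so the dichotomy $\Delta_J\in\{1,\Delta_K\}$ has to come from the fact that a linear factor cannot be the Alexander polynomial of a knot, not from irreducibility. More seriously, it \emph{is} of Fox--Milnor form, with $f(t)=2t-1$. Hedden's formula gives $\tau(Wh^{+}(T_{2,3},2))=0$, because the twisting $2$ is not less than $2\tau(T_{2,3})=2$; your own parenthetical only covers $t<1$. So neither obstruction you name prevents $K$ from being ribbon. The paper instead uses $s(Wh^{+}(T_{2,3},2))=2$ (Hedden--Ording).

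Your Euler-characteristic reasoning does correctly reduce $\Delta_J=1$ to $J=U$ here. Every generator in Alexander gradings $\pm1$ has odd Maslov grading, and every generator in grading $0$ has even Maslov grading, so there is no internal cancellation.

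For $Wh^{-}(T_{2,3},2)$ and $15n_{43522}$ there are two further gaps.

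First, these two knots have identical $\widehat{\mathrm{HFK}}$. It is not thin: Alexander grading $0$ contains $\mathbb{Q}^4_{(-1)}\oplus\mathbb{Q}_{(0)}$. Baldwin--Sivek only detect membership in the pair. So even $\widehat{\mathrm{HFK}}(J)\cong\widehat{\mathrm{HFK}}(K)$ yields only $J\in\{Wh^{-}(T_{2,3},2),15n_{43522}\}$. You must still exclude a ribbon concordance from one to the other. Since $s$ and $\tau$ agree on them, the paper needs a finer smooth concordance invariant: $s^{odd}_{Sq^1,-}(Wh^{-}(T_{2,3},2)\#\overline{15n_{43522}})=-2$, computed with \texttt{KnotJob}.

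Second, because of the cancellation in grading $0$, $\Delta_J=\Delta_K$ does not force equality of Floer groups. The rank-$7$ subgroup $\mathbb{Q}^2_{(0)}$, $\mathbb{Q}^3_{(-1)}$, $\mathbb{Q}^2_{(-2)}$ in Alexander gradings $1,0,-1$ survives every formal test you list (symmetry, genus, non-fiberedness, parity), because it is exactly $\widehat{\mathrm{HFK}}(\overline{5_2})$. Your fallback to Gordon's lemma or Khovanov injectivity is not a concrete argument. The paper disposes of this case using $\widehat{\mathrm{HFK}}$-detection of $\overline{5_2}$ together with $\tau(\overline{5_2})\neq 0=\tau(K)$.
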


\begin{theorem}\label{thm:notTransfinitelyNilpotent}
${ Wh^{+}(T_{2,3},2) }$ is neither ${ \mathbb{Q} }$-anisotropic nor transfinitely nilpotent. Moreover, ${ Wh^{-}(T_{2,3},2) }$ is \newline ${ \mathbb{Q} }$-anisotropic, but not transfinitely nilpotent.
\end{theorem}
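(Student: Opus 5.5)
We would verify Theorem \ref{thm:notTransfinitelyNilpotent} directly from Gordon's definitions (Definitions \ref{def:TransNilp} and \ref{def:Qaniso}), treating the two properties separately. Both properties are determined by the Alexander module and the group of the knot exterior. So the first step is to record, for $K_\pm = Wh^{\pm}(T_{2,3},2)$, a presentation of $\pi_1(S^3\setminus \nu K_\pm)$ and a Seifert matrix.

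A twisted Whitehead double of the trefoil is a satellite with a genus-one pattern. We will take the standard genus-one Seifert surface: a plumbing of an untwisted-or-$2$-twisted band along the companion $T_{2,3}$ with a small $\pm$-clasp band. This gives a $2\times 2$ Seifert matrix $V_\pm$ whose entries are governed by the twisting parameter $2$ and the clasp sign. Because the companion's knotting does not affect the Seifert form of a satellite with winding number zero, $V_\pm$ coincides with the Seifert matrix of the corresponding twist knot. From $V_\pm$ we read off $\Delta_{K_\pm}(t)=\det(V_\pm - tV_\pm^{T})$, up to units.

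\textbf{Anisotropy.} Gordon's condition asks whether the rational Blanchfield/Seifert-type form on the Alexander module admits no nontrivial self-annihilating submodule. Equivalently, we need no metabolizer over $\mathbb{Q}[t^{\pm1}]$ compatible with the symmetric pairing $V+V^T$. For a genus-one surface, isotropy reduces to whether some nonzero rational vector $x$ satisfies $x^T V x = 0$ and generates an invariant submodule. Concretely, for the sign where $\Delta$ factors over $\mathbb{Q}$ as $f(t)f(t^{-1})$, we expect to exhibit an explicit isotropic vector; for the other sign, $\Delta$ should be irreducible and non-symmetric-factoring, giving anisotropy. We expect $Wh^{+}$ to be the isotropic one, as the statement asserts, since its twist-knot analogue is the (algebraically) slice stevedore-type case. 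We will check this by computing the discriminant of the quadratic form $x^TV_\pm x$ and testing whether it is a rational square.

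\textbf{Failure of transfinite nilpotence.} A knot group $G$ is transfinitely nilpotent if the transfinite lower central series $G_\omega \supseteq G_{\omega+1}\supseteq\cdots$ eventually reaches the trivial group. For any knot, $G_2=G_\omega=[G,G]$ when the Alexander polynomial is nontrivial in the relevant sense. The key tool will be the satellite structure: the companion exterior $S^3\setminus \nu T_{2,3}$ embeds as a $\pi_1$-injective piece along the incompressible satellite torus. Its group, $\langle x,y\mid x^2=y^3\rangle$, is perfect-at-the-commutator level in the sense that its commutator subgroup is free and lies in every term of the transfinite lower central series of $G$. Indeed, the meridian of the companion solid torus bounds the pattern's Seifert surface disjointly (winding number zero), so it lies in $[G,G]$. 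We then argue inductively that the image of $\pi_1$ of the companion exterior lies in $G_\alpha$ for every ordinal $\alpha$. The reason is that it is generated by conjugates of elements that are commutators of elements already in $G_\alpha$ with $G$, as happens in the untwisted Whitehead double argument, where $G_\omega$ is nontrivial and stabilizes.

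The main obstacle is precisely this stabilization step: showing that the intersection $G_\infty$ is nontrivial. Our first approach will be to prove that $G_\omega = G_{\omega+1}$ and that $G_\omega$ contains the nontrivial companion subgroup. For this we will use that $G_\omega$ is normally generated by the companion peripheral data and that $[G,G_\omega]$ recovers those generators via the relation coming from the clasp. If that direct computation is unwieldy, we will instead use a representation argument, in the spirit of how Gordon shows the criterion. We would find a nonabelian quotient of $G$, for example onto a group built from the trefoil group via collapsing the pattern solid torus, in which the image of $G_\omega$ is a nontrivial perfect subgroup. Nontriviality of the transfinite intersection then follows.
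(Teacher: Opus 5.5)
\textbf{The transfinite nilpotence half has a genuine gap: you work with the wrong lower central series.} The anisotropy half reaches the right answer; see the last paragraph.

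Definition~\ref{def:TransNilp} concerns the series of the commutator subgroup $G'=[G,G]$ regarded as a group in its own right, i.e.\ $\gamma_{\alpha+1}(G')=[\gamma_\alpha(G'),G']$. You instead study $G_{\alpha+1}=[G_\alpha,G]$. Since $G_{ab}\cong\mathbb{Z}$, that series is constantly $G'$ from the first commutator on (Lemma~\ref{lem:stabilizationIfZabelianization}). So the criterion you aim for, $G_\infty\neq 1$, holds for every nontrivial knot, including fibered knots, which Gordon shows are transfinitely nilpotent.

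Reading $G_\alpha$ as $\gamma_\alpha(G')$ does not rescue your inductive claim either. The full companion group $H=\pi_1(S^3\setminus\nu T_{2,3})$ is not contained in $\gamma_1(G')=G''$. The companion meridian misses the genus-one Seifert surface, which lies in the pattern solid torus. It links the annulus core $\alpha$ once and the clasp core $\beta$ zero times, so it represents the dual vector $\alpha^*$ in $G'/G''$. That module is the cokernel of $tV-V^{T}$ (up to transpose) on the basis $\alpha^*,\beta^*$. Now $\alpha^*$ lies in the image only if $\Delta_K$, which has span two, divides the relevant column of the adjugate. That column contains the diagonal entry $(t-1)\,\mathrm{lk}(\beta,\beta^{+})=\pm(t-1)$, so $\alpha^*\neq 0$.

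Your quotient fallback is also invalid, because transfinite nilpotence does not pass to quotients. The trefoil group maps onto the $(2,3,5)$ triangle group $A_5$, carrying the residually nilpotent $G'\cong F_2$ onto a nontrivial perfect group. Relatedly, $[H,H]\cong F_2$ is not ``perfect at the commutator level.''

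\textbf{The missing idea is to use the lower central series of $H$ itself, rather than trying to trap $H$ inside the series of $G'$.} Your structural inputs are exactly right: $H$ injects into $G$ across the incompressible satellite torus, and $H\le G'$ because the winding number is zero. Then $\gamma_\alpha(H)\le\gamma_\alpha(G')$ for every ordinal $\alpha$ (Lemma~\ref{lem:rnilInheritance}). Since $H_{ab}\cong\mathbb{Z}$, $\gamma_\alpha(H)=[H,H]$ for all $\alpha\ge 1$, and this is nontrivial because $H$ is nonabelian. So it is $[H,H]$, not $H$, that survives in every term.

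This is the paper's argument, except for how the embedding is obtained. The paper extracts $\langle b,c\mid b^2cb^{-1}c\rangle$ from a SnapPy presentation and proves it injects via Brodski\u{\i}'s local indicability theorem and Howie's Freiheitssatz. That subgroup is in fact the trefoil group: with $w=b^2c$ the relator becomes $w^2=b^3$. Your satellite-torus route is more conceptual. It shows that any winding-number-zero satellite with nontrivial companion fails to be transfinitely nilpotent.

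On anisotropy: Gordon's pairing is Milnor's skew-symmetric pairing, not $V+V^T$. Every $t$-invariant line is therefore self-annihilating, and in genus one the criterion is simply whether $\Delta_K$ has a rational root. Your discriminant test on $x^TVx$ agrees only because of genus one. With $a=\det V$, one has $\Delta_K\doteq at^2+(1-2a)t+a$, and $-\det(V+V^T)=1-4a$ is exactly its discriminant: $9$ for $K_+$ and $-7$ for $K_-$. The paper reads this off directly from $\Delta_{K_+}=(2t-1)(t-2)$ and the irreducibility of $\Delta_{K_-}$.
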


\begin{figure}[htbp]
  \centering
  \begin{subfigure}{0.32\textwidth}
    \centering
    \includegraphics[width=0.8\linewidth]{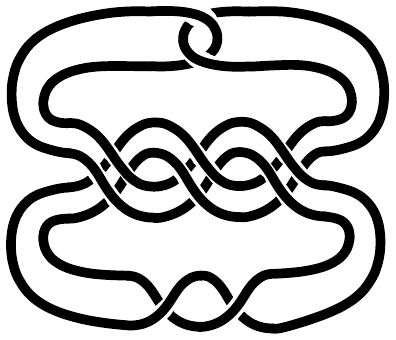}
    \caption{$Wh^+(T_{2,3},2)$}
  \end{subfigure}
  \hfill
  \begin{subfigure}{0.32\textwidth}
    \centering
    \includegraphics[width=0.8\linewidth]{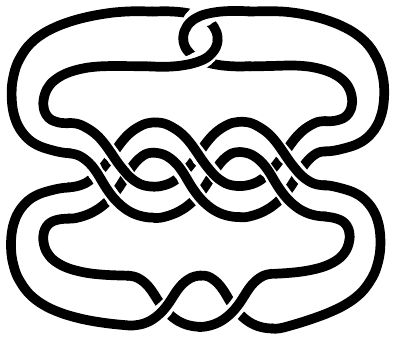}
    \caption{$Wh^-(T_{2,3},2)$}
  \end{subfigure}
  \hfill
  \begin{subfigure}{0.32\textwidth}
    \centering
    \includegraphics[width=0.8\linewidth]{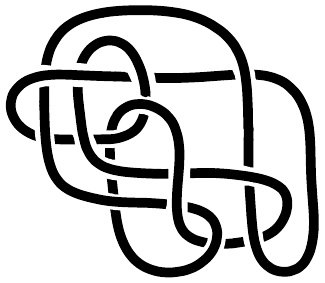}
    \caption{$15n_{43522}$}
  \end{subfigure}

  \caption{}
  \label{fig:detectedNonfiberedKnots}
\end{figure}

The proof of Theorem \ref{thm:minimalKnots} makes use of recent technical results from knot Floer homology.
Knot Floer homology, defined independently by Rasmussen \cite{Ras03} and Ozsv\'{a}th and Szab\'{o} \cite{OS04b}, is an invariant which assigns a knot $K$ in $S^3$ to a ${ (\mathbb{Z}\oplus \mathbb{Z}) }$-graded vector space ${ \widehat{\mathrm{HFK}}(K) }$.
Let $\Sigma \hookrightarrow S^{3} \times I$ be a smooth, properly embedded surface between two knots ${ K_{0} }$ and ${ K_{1} }$; knot Floer homology is functorial in the sense that it associates ${ \Sigma }$, together with a decoration, to a graded linear map ${ F_{\Sigma} : \widehat{\mathrm{HFK}}(K_{0}) \rightarrow \widehat{\mathrm{HFK}}(K_{1}) }$. \cite{Juh16, JTZ21, Zem19b}
Zemke proved \cite{Zem19a} that if there is a ribbon concordance  ${C : J \rightarrow K }$, then the functorially induced map ${F_{C} :  \widehat{\mathrm{HFK}}(J) \hookrightarrow \widehat{\mathrm{HFK}}(K) }$ is a grading-preserving inclusion.
Baldwin and Sivek \cite{BS25a} show ${ \widehat{\mathrm{HFK}} }$ detects ${ K_{0} = Wh^{+}(T_{2,3},2) }$ and detects membership in the set 
${ \left\{ K_{1} =Wh^{-}(T_{2,3},2), K_{2} = 15n_{43522}\right\} }$. \newline
For ${ K \in \{K_{0},K_{1},K_{2}\} }$, we obtain a minimality result for ${ K }$ by supposing there is some ${ J \leq K }$ and then exploiting conditions on the Alexander polynomial of $J$ and structural properties of ${ \widehat{\mathrm{HFK}}(K) }$ to force the ribbon-induced inclusion of ${ \widehat{\mathrm{HFK}}(J) }$ into it to be an isomorphism. This strategy quickly yields the result for ${ Wh^{+}(T_{2,3},2) }$ --- the proof for the remaining pair is only slightly complicated by the fact that ${ \widehat{\mathrm{HFK}} }$ only detects membership in ${ \left\{Wh^-(T_{2,3},2),15n_{43522}\right\} }$. 

\begin{remark}
In principle, a similar strategy could be used to detect ribbon minimality using other link TQFTs. 
For example, Khovanov homology also associates ribbon concordances to injective maps and there are also a handful of links detected by Khovanov homology. 
Recent work of Lobb \cite{Lobb26} explores a possible direction for a Khovanov minimality program. 
Unfortunately, one cannot na\"{i}vely implement our strategy from ${ \widehat{\mathrm{HFK}} }$ in the Khovanov setting--- Khovanov homology decategorifies to the Jones polynomial which, unlike the Alexander polynomial, is not known to have any relationship with ribbon concordance. 
\end{remark}

For links with many components, some analysis of strong ribbon concordances of split links (see Proposition \ref{prop:splitGeneric}) immediately yields
\begin{proposition}
Suppose ${ L }$ is a split disjoint union of non-split sublinks ${ L_{i} }$. Then ${ L }$ is ribbon concordance minimal if and only if each ${ L_{i} }$ is ribbon concordance minimal. 
\end{proposition}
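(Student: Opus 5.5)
The plan is to reduce everything to Proposition \ref{prop:splitGeneric}. I take that proposition to say the following: if $L = L_1 \sqcup \cdots \sqcup L_k$ is a split union of non-split sublinks and $L' \leq L$, then $L'$ is itself a split union $L'_1 \sqcup \cdots \sqcup L'_k$ with $L'_i \leq L_i$ for each $i$. Concretely, the strong ribbon concordance restricts, after isotopy, to strong ribbon concordances $L'_i \to L_i$ taking place in disjoint balls.

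The heuristic behind this is as follows. Run the concordance backwards from $L$. It is then a sequence of band moves and deaths of unknotted unlinked components, so $L'$ together with some trivial components is obtained from $L$ by fission bands. Each concordance annulus connects one component of $L'$ to one component of $L$. Grouping annuli according to which $L_i$ their top component lies in, one uses the splitting spheres of $L$ and the strongness of the concordance to separate the pieces.

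For the \emph{if} direction, suppose each $L_i$ is minimal and $L' \leq L$. By Proposition \ref{prop:splitGeneric}, $L' = \bigsqcup L'_i$ with $L'_i \leq L_i$. Minimality of each $L_i$ gives $L'_i = L_i$. Since split unions of pieces in disjoint balls are well defined up to isotopy, $L' = L$.

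For the \emph{only if} direction, suppose $L$ is minimal and $L'_j \leq L_j$ for some $j$. Form $L' = L'_j \sqcup \bigsqcup_{i\neq j} L_i$. Place the ribbon concordance for the $j$-th piece inside $B_j \times I$, where $B_j$ is a ball containing $L_j$ and disjoint from the other components, and take product annuli for the other components. This gives a strong ribbon concordance $L' \to L$, so $L' \leq L$. Minimality of $L$ gives $L' = L$. It remains to recover $L'_j = L_j$. Applying Proposition \ref{prop:splitGeneric} to the equality $L' = L$ (as the trivial concordance), or using uniqueness of split decompositions into non-split pieces (a consequence of the sphere theorem / prime-type decomposition of link complements), the non-split pieces must match up. Hence $L'_j$ is isotopic to $L_j$. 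One subtlety is that $L'_j$ might itself be split; in that case the piece count of $L'$ exceeds that of $L$, contradicting $L' = L$. Alternatively, use Theorem \ref{thm:linkRibbonOrder}: antisymmetry together with $L'_j \leq L_j$ suffices once one shows $L_j \leq L'_j$. That follows by restricting the identity concordance $L \to L'$ via Proposition \ref{prop:splitGeneric}, matching components.

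The main obstacle is component bookkeeping in the last step. The identification $L' = L$ must send the components of $L_j$ to those of $L'_j$, rather than permuting blocks between different $L_i$ that happen to be isotopic. I would handle this by noting that a strong concordance matches components through the annuli. Restricting to the annuli ending on $L_j$ isolates the correct block, and uniqueness of the split decomposition then identifies $L'_j$ with $L_j$ as oriented links.
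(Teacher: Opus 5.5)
Your proposal is correct and takes the paper's route. The paper only asserts that the statement follows immediately from Proposition \ref{prop:splitGeneric}, iterated as in Case ii of the proof of Theorem \ref{thm:linkRibbonOrder}, and your two directions spell out that reduction. Your final bookkeeping worry needs no appeal to the annuli (and restricting the product concordance from an isotopy would not isolate $L'_j$ anyway): each maximal non-split piece has irreducible complement, so the multiset of isotopy types of these pieces is an isotopy invariant, and cancelling the common pieces $L_i$, $i\neq j$, gives $L'_j\cong L_j$ however the isotopy permutes blocks.
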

When ${ L }$ is not split, the minimality of ${ L }$ appears to be completely independent from minimality of its sublinks. For example, every proper sublink of a Brunnian link is ribbon concordance minimal, but there are examples of Brunnian links that are strongly ribbon concordant to an unlink. On the other hand, it is reasonable to suspect there are ribbon minimal links containing components which are not ribbon minimal as knots, see Conjecture \ref{conj:W4} and the accompanying Remark \ref{rem:W4minimality} for a conjectural example. 

We prove minimality for two infinite families of links, the first of which is obtained using classical techniques and generalizes a result of Boninger and Greene.
\begin{theorem}\label{thm:BonGreeneGeneralization}
Strongly quasipositive, fibered links are strong ribbon minimal. 
\end{theorem}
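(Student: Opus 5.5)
Suppose $L'\leq L$, with $L$ strongly quasipositive and fibered, via a strong ribbon concordance $C$ built from $k$ births (0-handles) and $k$ saddles (1-handles). The plan is to show $k=0$, so that $C$ is a product and $L'=L$. The key invariant is the Euler characteristic of a maximal-Euler-characteristic Seifert surface, $\chi(L)$.

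\textbf{Step 1: $\chi(L')\ge \chi(L)$.} Fiberedness of $L$ gives $\chi(L)=\chi(F)$, where $F$ is the fiber surface. Since $L$ is strongly quasipositive, $F$ is a quasipositive surface. By Rudolph's slice--Bennequin results, the slice Euler characteristic $\chi_4(L)$ then equals $\chi(F)$: $F$ pushed into $B^4$ is a maximal-$\chi$ slice surface. Now take any Seifert surface $S'$ for $L'$, push it into $B^4$, and glue on $C$. Gluing the annuli does not change the Euler characteristic, so this produces a slice surface for $L$ with $\chi=\chi(S')$. Hence $\chi(S')\le\chi_4(L)=\chi(L)$, and so $\chi(L')\le\chi(L)$.

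For the reverse inequality I would use Gordon/Gilmer-style arguments adapted to links. Turn $C$ upside down to get a concordance from $L$ to $L'$ with only 1- and 2-handles. Alternatively, use the link Floer analogue of Zemke's injection to get $\widehat{\mathrm{HFL}}(L')\hookrightarrow \widehat{\mathrm{HFL}}(L)$ preserving Alexander gradings. Since $\widehat{\mathrm{HFL}}$ detects the Thurston norm, this gives $\chi(L')\ge\chi(L)$. Thus $\chi(L')=\chi(L)$.

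\textbf{Step 2: fiberedness.} I would then use Miyazaki's fiberedness inheritance for ribbon concordance, extended to links with the group-theoretic input of Gordon's result \cite{Gor81}, to conclude that $L'$ is fibered. Its fiber $F'$ then satisfies $\chi(F')=\chi(F)$.

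\textbf{Step 3: the main obstacle, ruling out $k>0$.} Glue $F'\cup C$ to obtain a surface $\Sigma\subset B^4$ bounded by $L$ with $\chi(\Sigma)=\chi(F)$. The most promising route is Floer-theoretic, via the top Alexander grading. For fibered links this group is rank one and is detected, so the inclusion is an isomorphism there. Composing $F_C$ with the map for the reversed concordance gives the identity (Zemke). This implies that the doubled concordance is a trivial concordance up to the relevant Floer data, and in particular that the Floer classes of the monodromies agree. From that I would argue that the fibrations of the complements match, and hence that the fundamental groups of the complements are isomorphic via the map that $C$ induces. Gordon's surjectivity $\pi_1(S^3\setminus L)\twoheadrightarrow\pi_1(\text{exterior of }C)$, combined with the injectivity of $\pi_1(S^3\setminus L')$ into it, then forces the group to be unchanged. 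For fibered links, Hopfian and residually finite groups let Gordon's argument conclude that $C$ is a product, so $k=0$.

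The delicate point throughout is the link generalization of Gordon's group criterion. If it resists, I would fall back on the fact that the fibered strongly quasipositive surface is the unique minimal-genus quasipositive surface, together with Boninger--Greene's technique. Their technique shows that each saddle of $C$ would have to lie in $F$ and cancel a birth, which contradicts minimality of the handle count.
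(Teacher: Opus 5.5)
Your Step 1 is the paper's argument: Rudolph's $\chi_3(L)=\chi_4(L)$, concordance invariance of $\chi_4$, and Ni's grading result with $\widehat{\mathrm{HFL}}$ ribbon injectivity together give $\chi_3(L')=\chi_3(L)$. Your Step 2 matches the paper's appeal to Sun's link version of Miyazaki. In fact fiberedness of $L'$ comes for free: once $\chi_3(L')=\chi_3(L)$, the top Alexander gradings coincide and the injection forces rank one there.

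\textbf{The gap is Step 3.} It carries all the content of the theorem and contains no actual argument.
\begin{itemize}
\item Zemke's identity $F_{\overline{C}}\circ F_C=\mathrm{id}$ holds for \emph{every} ribbon concordance, so it cannot distinguish a nontrivial $C$ from a product.
\item A rank-one top Alexander grading carries no monodromy information, and no known result makes Floer maps force two fibrations to ``match''.
\item Your group-theoretic sentence presupposes exactly what must be shown: that Gordon's surjection $\pi_1(S^3\setminus\nu L)\twoheadrightarrow\pi_1(X)$ ($X$ the exterior of $C$) is injective. An abstract isomorphism of link groups together with $\pi_1(S^3\setminus\nu L')\hookrightarrow\pi_1(X)$ would not give this: $F_2$ surjects onto $\mathbb{Z}\ast\mathbb{Z}/2$, which contains $F_2$.
\item Gordon's argument never shows $C$ is a product. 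Moreover, $k=0$ is the wrong goal, since cancelling birth--saddle pairs can always be inserted; minimality only asks for $L'=L$.
\item Your fallback misdescribes Boninger--Greene. Their proof is precisely the genus-equality plus Miyazaki/Gordon route, with no saddle-cancellation step.
\end{itemize}

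What you are missing is the paper's Proposition~\ref{prop:fiberRibbonConcordance}, a link version of Gordon's Lemmas 3.2 and 3.4, and its proof is classical rather than Floer-theoretic. Let $Y_0,Y_1$ be the complements of $L',L$.
\begin{enumerate}
\item Take $\omega\in H^1(X;\mathbb{Z})$ restricting to the fiber classes at both ends and pass to infinite cyclic covers, so each $\widetilde{Y_i}$ is homotopy equivalent to a fiber.
\item Milnor duality makes the image of $H^1(\widetilde{X};\mathbb{Q})\to H^1(\partial\widetilde{X};\mathbb{Q})$ Lagrangian. Hence $\dim H_1(\widetilde{X};\mathbb{Q})=\tfrac12\bigl(\dim H_1(\widetilde{Y_0};\mathbb{Q})+\dim H_1(\widetilde{Y_1};\mathbb{Q})\bigr)$.
\item The fibers have equal Euler characteristic, so the surjection $H_1(\widetilde{Y_1})\to H_1(\widetilde{X})$ is an isomorphism. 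This holds integrally because $H_1$ of a fiber is free.
\item Duality together with $H_1(\widetilde{X},\partial\widetilde{X})=0$ gives $H_2(\widetilde{X};\mathbb{Z})=0$, so Stallings' theorem applies.
\item Residual nilpotence of the free group $\pi_1(\widetilde{Y_1})$ then makes $\pi_1(\widetilde{Y_1})\to\pi_1(\widetilde{X})$ an isomorphism, hence $\pi_1(Y_1)\to\pi_1(X)$ is an isomorphism.
\item The peripheral-structure argument and Waldhausen's theorem give a meridian-preserving homeomorphism of complements, so $L'=L$. Waldhausen applies because fibered links are non-split, so their complements are Haken.
\end{enumerate}
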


\begin{corollary}
The following infinite families of links are ribbon concordance minimal.
\begin{itemize}
\item Torus links
\item Links of isolated singularities for a complex curve
\item The ``cabling" satellite pattern links ${ C_{p,q} }$
\item Special $L$-space links (see \cite{CL23}) 
\end{itemize}
\end{corollary}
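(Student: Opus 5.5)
The corollary reduces to Theorem \ref{thm:BonGreeneGeneralization}: for each listed family we check that every member is both strongly quasipositive and fibered, and then minimality follows at once. So the plan is simply to verify these two properties family by family, citing the standard literature rather than proving anything new.

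\emph{Links of isolated singularities.} Milnor's fibration theorem shows these links are fibered. Rudolph (and, in the form we need, Boileau--Orevkov) showed that links of isolated plane curve singularities are transverse $\mathbb{C}$-links bounding quasipositive fiber surfaces, and in fact are strongly quasipositive.

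\emph{Torus links.} Positive torus links $T_{p,q}$ are the links of the singularities $x^p=y^q$, so they are covered by the previous case. For negative torus links we have a choice. Either we adopt the usual convention that orientations are chosen so that $T_{p,q}$ is a positive braid closure, or we note that the ribbon partial order is compatible with mirroring: the mirror of a strong ribbon concordance is still a strong ribbon concordance. Hence the mirror of a ribbon minimal link is ribbon minimal, and the negative torus links follow from the positive ones. We also need to handle torus links with non-coherent orientations on their components. The cleanest route is to restrict to the orientations arising as closures of positive braids, which are fibered and strongly quasipositive by Stallings and Rudolph; I expect the paper's convention to be exactly this.

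\emph{Cable links $C_{p,q}$.} Here we need to pin down the paper's definition. If $C_{p,q}$ denotes the pattern link in the solid torus together with its meridian/axis (a $T_{p,q}$-type braid closure union the braid axis), then this is again a positive braid closure, or equivalently the link of a singularity. So it is fibered and strongly quasipositive, via the Milnor fibration or Stallings' theorem for homogeneous braids together with Rudolph.

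\emph{Special $L$-space links.} By definition in \cite{CL23}, or via their main results, these are $L$-space links satisfying the relevant condition, and Cavallo--Liu show they are fibered and strongly quasipositive. We would cite their results directly.

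The main obstacle is bookkeeping over conventions: orientations of torus links, the exact meaning of $C_{p,q}$, and whether ``special'' in \cite{CL23} already includes strong quasipositivity and fiberedness. If a family contains members that fail strong quasipositivity with one orientation, the fallback is the mirror-invariance of minimality described above, or restricting the statement to the positive orientations.
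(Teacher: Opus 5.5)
Your proposal is correct and follows the paper's route exactly. The paper gives no separate proof: the corollary is read off from Theorem~\ref{thm:BonGreeneGeneralization}, since each listed family, with its positive-braid or complex orientation, is fibered and strongly quasipositive (the special $L$-space links by \cite{CL23}).

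Your extra observation that mirroring is an automorphism of $(\leq)$, so that mirrors such as negative torus links are also minimal, is correct and covers a point the paper leaves unstated. Restricting to coherent orientations is also the right reading: anti-parallel $T(2,2n)$ with $|n|\geq 2$ bounds an $n$-twisted annulus with Alexander polynomial $\pm n(t-1)$, so it is not fibered and is not covered by the theorem.
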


There is a natural extension of knot Floer homology called \textit{link Floer homology} and denoted ${ \widehat{\mathrm{HFL}} }$. There have also lately been many link detection results using ${ \widehat{\mathrm{HFL}} }$, see \cite{BD22}, \cite{BM24}, \cite{BD24}, and \cite{Binns25}.
A version of Zemke's ribbon inclusion result also holds for ${ \widehat{\mathrm{HFL}} }$ and, imitating the strategy from the knot case using detection results from \cite{BM24}, we can give an alternative proof that 
2-component torus links are ribbon minimal.
\begin{theorem}\label{thm:HopfLinksMinimal}
Let ${ H_{n}^{-} }$ denote the 2-component torus link ${ T(2,2n) }$ with components oriented oppositely. Link Floer homology detects ribbon minimality of ${ H_{n}^{-} }$.
\end{theorem}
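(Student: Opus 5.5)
Suppose $L' \leq L := H_n^-$ via a strong ribbon concordance $C$. The goal is to show $\widehat{\mathrm{HFL}}(L') \cong \widehat{\mathrm{HFL}}(H_n^-)$ as multi-graded groups, and then invoke the detection result of \cite{BM24}, namely that link Floer homology detects $T(2,2n)$ with this orientation, to conclude $L' = H_n^-$. Since $C$ is a disjoint union of annuli, $L'$ is a two-component link and $C$ gives a component correspondence. The link Floer version of Zemke's ribbon inclusion then gives a grading-preserving injection
\[
F_C : \widehat{\mathrm{HFL}}(L') \hookrightarrow \widehat{\mathrm{HFL}}(H_n^-),
\]
which preserves both Alexander multi-gradings (one per component, matched via $C$) and the Maslov grading.

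The argument then rests on two decategorified constraints on $L'$.

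\textbf{First constraint: linking number.} The linking number is a strong concordance invariant, so $\mathrm{lk}(L') = \mathrm{lk}(H_n^-) = -n$ (up to the sign convention). In particular $L'$ is nonsplit.

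\textbf{Second constraint: Alexander polynomial.} I expect the two-variable Alexander polynomial $\Delta_{L'}(x,y)$ to divide $\Delta_{H_n^-}(x,y)$, by the link analogue of Gilmer's result that a ribbon concordance exhibits $\pi_1$ of the complement of $L'$ as a subgroup of a quotient \cite{Gor81}. Alternatively, one can work directly with the Euler characteristic of $\widehat{\mathrm{HFL}}$. For $T(2,2n)$ the polynomial is essentially $(x^n y^n - 1)/(xy-1)$ times units, and it is supported on the diagonal.

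\textbf{Key step: a counting argument.} The group $\widehat{\mathrm{HFL}}(H_n^-)$ is small: it has rank $4n$, or something comparable, supported near the diagonal with a specific Maslov pattern. The injection $F_C$ forces $\widehat{\mathrm{HFL}}(L')$ to be a graded subspace of it. Three facts then pin it down:
\begin{itemize}
\item Its graded Euler characteristic must be $\Delta_{L'}$, up to the factor $(x^{1/2}-x^{-1/2})(y^{1/2}-y^{-1/2})$.
\item It is symmetric under the link Floer symmetry.
\item It must be nonzero in the top Alexander gradings, since $\widehat{\mathrm{HFL}}$ detects the Thurston norm.
\end{itemize}
The Thurston norm of the complement should also not decrease. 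From the analogue of the genus bound under ribbon concordance, I would show that the extremal Alexander gradings of $L'$ match those of $H_n^-$.

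\textbf{Expected main obstacle.} The hardest step is ruling out a proper subspace, i.e. a group with the same extremal gradings but missing interior generators. Since each Maslov/Alexander bidegree of $\widehat{\mathrm{HFL}}(H_n^-)$ is one-dimensional, a proper subspace would change the Euler characteristic. My plan for excluding this is to combine the linking-number constraint, which fixes $\Delta_{L'}(1,1)$-type evaluations via the Torres conditions, with divisibility, forcing $\Delta_{L'} = \Delta_{H_n^-}$. Matching Euler characteristics, together with single-rank bidegrees, then gives the isomorphism, and detection finishes the proof.
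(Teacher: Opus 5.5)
Your route is the paper's: ribbon injectivity, invariance of the linking number, Gilmer divisibility combined with the Torres condition to force $\Delta_{L'}\doteq\Delta_{H_n^-}$, absence of cancellation in $\chi$, and detection by \cite{BM24}. However, the step you flag as the main obstacle rests on a false claim, and the Alexander-polynomial step is missing two details.

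The groups $\widehat{\mathrm{HFL}}(H_n^-,\vec a)$ are not all one-dimensional. Multiplying $\Delta_{H_n^-}$ by $(t_1^{1/2}-t_1^{-1/2})(t_2^{1/2}-t_2^{-1/2})$ gives coefficients of absolute value $2$ in $n-1$ diagonal multigradings once $n\ge 2$, so those summands are $\mathbb{F}^2$. One-dimensionality is also not the right criterion: two rank-one pieces of opposite Maslov parity in one Alexander multigrading could be dropped together without changing $\chi$. What you need, and what the paper uses, is that in each Alexander multigrading $\widehat{\mathrm{HFL}}(H_n^-)$ is supported in a single Maslov grading, because $T(2,2n)$ is alternating and hence thin. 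Then each coefficient of $\chi$ is $\pm$ the rank in that multigrading, so a graded subspace with the same $\chi$ is the whole group.

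The same observation shows $\chi(\widehat{\mathrm{HFL}}(L'))\neq 0$, i.e.\ $\Delta_{L'}=\Delta_{L'}^{tor}$. You need this before Gilmer's theorem \cite{Gil84}, which is a statement about $\Delta^{tor}$, gives $\Delta_{L'}\mid\Delta_{H_n^-}$. Alternatively, Torres with $\ell=-n\neq 0$ gives the same nonvanishing.

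Finally, the value at $(1,1)$ does not suffice. With $s=t_1t_2$ in the paper's normalization, $\Delta_{H_n^-}\doteq\prod_{d\mid n,\,d>1}\Phi_d(s)$. So divisibility only gives $\Delta_{L'}\doteq\prod_{d\in S}\Phi_d(s)$ for some subset $S$. Since $\Phi_d(1)=1$ whenever $d$ is not a prime power, $(1,1)$ cannot detect such factors; already for $n=6$ it cannot tell whether $\Phi_6(s)$ divides $\Delta_{L'}$.

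Use the full specialization $\Delta_{L'}(t,1)\doteq\frac{t^n-1}{t-1}\Delta_{L'_1}(t)$ instead. It reads $\prod_{d\in S}\Phi_d(t)\doteq\prod_{d\mid n,\,d>1}\Phi_d(t)\cdot\Delta_{L'_1}(t)$, which forces $S$ to contain every divisor. Done this way, your argument is slightly cleaner than the paper's. The paper asserts $\Delta_L\in\{1,\Delta_{H_n^-}\}$ and invokes Davis' theorem to exclude $1$. The degree count instead handles intermediate divisors for composite $n$ and excludes $1$ directly when $n\ge 2$. Your Thurston-norm/extremal-grading step is unnecessary.
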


Recall that the $n$\textit{-twisted Whitehead links} $W_n$ are 2-component links obtained as the union of the $n^{th}$ twist knot ${ T_{n} }$ with an unknot ${ \mu }$, examples are seen in Figure \ref{fig:linkExamples}. 
Using detection results from \cite{BD24}, we obtain the following. 

\begin{theorem}\label{thm:WhiteheadLinks}
With the possible exceptions of ${ W_{4} }$ and ${\overline{W_{4}}= W_{-5} }$, the $n$-twisted Whitehead links $W_n$ are ribbon minimal.
\end{theorem}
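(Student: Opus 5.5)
Suppose $L \leq W_n$, i.e.\ there is a strong ribbon concordance $C$ from a 2-component link $L$ to $W_n$. We will show that $L = W_n$ by the same strategy as in the knot case.

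\emph{Step 1 (inclusion).} By the link Floer version of Zemke's theorem, $C$ induces an injection $\widehat{\mathrm{HFL}}(L) \hookrightarrow \widehat{\mathrm{HFL}}(W_n)$. This injection preserves the Maslov grading and the multi-Alexander grading; the latter makes sense because $C$ is a union of annuli respecting the component ordering.

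\emph{Step 2 (each component).} Each annulus of $C$ is itself a ribbon concordance of knots. So the components $L_1, L_2$ of $L$ satisfy $L_1 \leq T_n$ and $L_2 \leq \mu$.
\begin{itemize}
\item The unknot is ribbon minimal (by genus, since Seifert genus is monotone), so $L_2$ is an unknot.
\item Twist knots are 2-bridge, so Tagami's classification applies. Alternatively, Gilmer's divisibility, applied to the quadratic Alexander polynomial of $T_n$, forces either $\Delta_{L_1} = \Delta_{T_n}$ or $\Delta_{L_1} = 1$.
\end{itemize}

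\emph{Step 3 (Euler characteristic).} Next we constrain the multivariable Alexander polynomial of $L$. Torres conditions apply, and the linking number is preserved under concordance, so $\mathrm{lk}(L)=\mathrm{lk}(W_n)=0$. Combining this with the ribbon divisibility results for links (the Gilmer/Kawauchi-type statement, from the $\pi_1$ surjection picture), the graded Euler characteristic of $\widehat{\mathrm{HFL}}(L)$ must be a factor of $\Delta_{W_n}$ compatible with these constraints. Since $\widehat{\mathrm{HFL}}(W_n)$ is thin and small, with rank concentrated in a few bigradings, a subspace with the correct Euler characteristic supported in those gradings should be forced to be the whole group. We then check two further constraints:
\begin{itemize}
\item symmetry of $\widehat{\mathrm{HFL}}(L)$ under the conjugation symmetry;
\item the rank bounds in the extremal Alexander gradings, which detect Thurston norm and fiberedness.
\end{itemize}
Together these should show that the inclusion is an isomorphism, so $\widehat{\mathrm{HFL}}(L) \cong \widehat{\mathrm{HFL}}(W_n)$ as graded groups.

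\emph{Step 4 (detection).} Apply the result of \cite{BD24} that $\widehat{\mathrm{HFL}}$ detects $W_n$ (possibly up to a small finite set) to conclude $L = W_n$. If detection only holds up to a set of candidates, as in the $Wh^-(T_{2,3},2)$ versus $15n_{43522}$ case, rule out the other candidate by classical means: for example compare components, Alexander polynomials, or show there is no ribbon concordance between them, using Agol's theorem (Theorem \ref{thm:linkRibbonOrder}) for antisymmetry.

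\emph{Main obstacle.} The hard step is Step 3, forcing the inclusion to be surjective. The risk is that a proper graded subspace of $\widehat{\mathrm{HFL}}(W_n)$ could have an admissible Euler characteristic, for instance if $L$ were a split link or the unlink, whose $\widehat{\mathrm{HFL}}$ sits in the central gradings. The first attempt would be to use the nonvanishing of $\widehat{\mathrm{HFL}}$ in the top Alexander grading of the $\mu$-component. The Thurston norm of $L$'s complement is bounded by that of $W_n$, and the linking number is zero; one would rule out the split and unlink cases by showing that a ribbon concordance from a split link to a non-split $W_n$ cannot have the required multivariable Alexander polynomial (for $W_n$ this is nonzero, while it vanishes for split links), and Kawauchi-type divisibility preserves nonvanishing. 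Where this grading argument fails, namely for the exceptional $W_4$ and $W_{-5}$, the graded group is presumably too degenerate to pin down. That is consistent with their exclusion from the statement.
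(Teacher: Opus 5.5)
There is a genuine gap in Step 3, and it is the step that carries the content for $n>1$. Your claim that a subspace of $\widehat{\mathrm{HFL}}(W_n)$ with the same graded Euler characteristic must be the whole group only holds when no Alexander multigrading contains generators of opposite Maslov parity. That is the situation for $n\in\{0,1\}$ (and their mirrors), and there your outline essentially reproduces the paper's Case i.

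For $n>1$, however, $\widehat{\mathrm{HFL}}(W_n)$ is neither thin nor small:
\begin{itemize}
\item $\Delta_{W_n}$, and hence $\chi(\widehat{\mathrm{HFL}}(W_n))=(t_1-2+t_1^{-1})(t_2-2+t_2^{-1})$, is independent of $n$, with total absolute coefficient $16$.
\item The rank grows linearly in $n$. By the component-removal spectral sequence discussed below, it is at least $2\,\mathrm{rank}\,\widehat{\mathrm{HFK}}(T_n)=2(2n+1)$.
\end{itemize}
So for large $n$ most of the group consists of cancelling pairs. Deleting a conjugation-symmetric family of such pairs gives proper subspaces with exactly the right Euler characteristic; the paper even remarks that $\widehat{\mathrm{HFL}}(W_4)$ has subspaces of Euler characteristic zero. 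Neither the symmetry nor the extremal-grading ranks (Thurston norm, fiberedness) constrain these interior gradings. So ``should be forced'' cannot be turned into a proof with the tools you list.

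The missing idea is the one your Step 2 gestures at but never uses. By Abe--Tagami, the twist knot $T_n$ is ribbon minimal for $n\neq 4,-5$, so the component of $L$ corresponding to $T_n$ is isotopic to $T_n$. The paper feeds this into the Ozsv\'ath--Szab\'o component-removal spectral sequence, obtained by collapsing the $a_2$-grading of $\widehat{\mathrm{HFL}}(L)$. Its $E_\infty$ page must be $\widehat{\mathrm{HFK}}(T_n)\otimes(\mathbb{F}_{(-1)}\oplus\mathbb{F}_{(0)})$. Comparing ranks grading by grading with the possible $E_2$ pages coming from symmetric subspaces of $\widehat{\mathrm{HFL}}(W_n)$ forces $\widehat{\mathrm{HFL}}(L)\cong\widehat{\mathrm{HFL}}(W_n)$, and detection then finishes.

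This also corrects your explanation of the exceptions. $W_4$ and $W_{-5}$ are not excluded because their Floer groups are especially degenerate. They are excluded because $T_4=6_1$ is ribbon, so the corresponding component of $L$ could a priori be an unknot, and then the spectral sequence gives no useful lower bound.

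A smaller point: your assertion that nonvanishing of $\Delta$ passes down a ribbon concordance is true, but it needs an argument. One route is twisted homology over $\mathbb{Q}(t_1,t_2)$ combined with the handle structure of the concordance complement. Gilmer's theorem concerns $\Delta^{tor}$ and does not give it. For $n\in\{0,1\}$ the paper sidesteps this by observing that $\widehat{\mathrm{HFL}}(W_n)$ has no subspace of Euler characteristic zero.
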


\begin{figure}[htbp]
  \centering
  \begin{subfigure}{0.32\textwidth}
    \centering
    \includegraphics[width=0.9\linewidth]{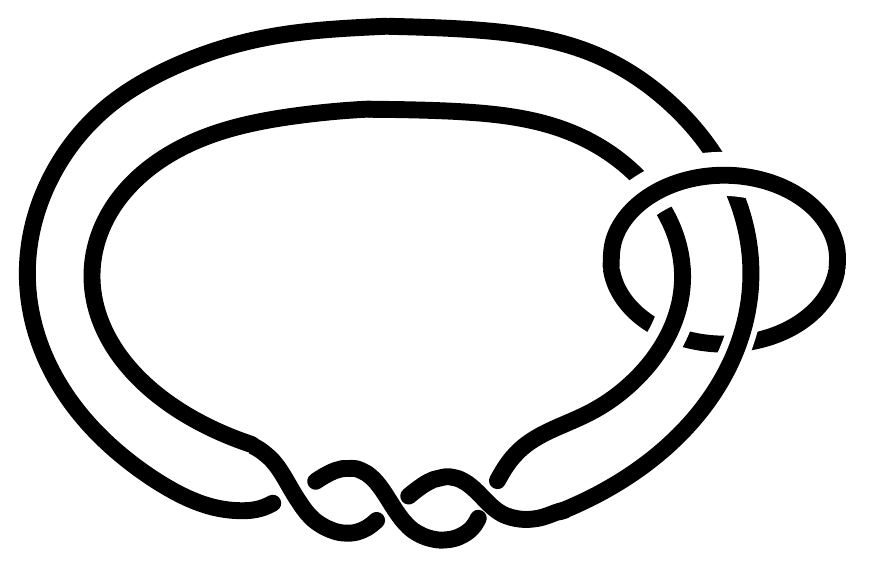}
    \caption{$C_{2,3}$}
    \label{fig:23CablingLink}
  \end{subfigure}
  \hfill
  \begin{subfigure}{0.32\textwidth}
    \centering
    \includegraphics[width=0.9\linewidth]{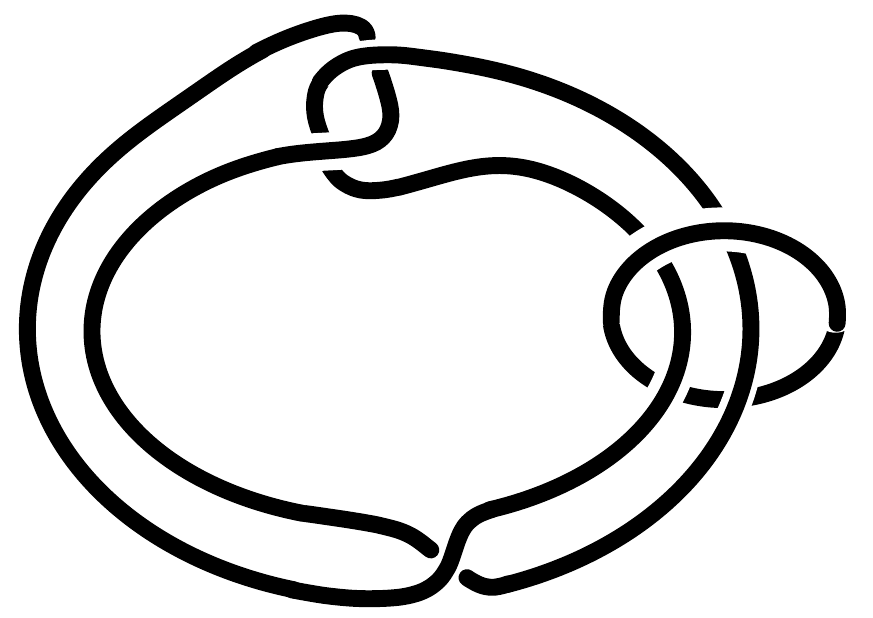}
    \caption{$W_1$}
    \label{fig:WhiteheadW1}
  \end{subfigure}
  \hfill
  \begin{subfigure}{0.32\textwidth}
    \centering
    \includegraphics[width=0.9\linewidth]{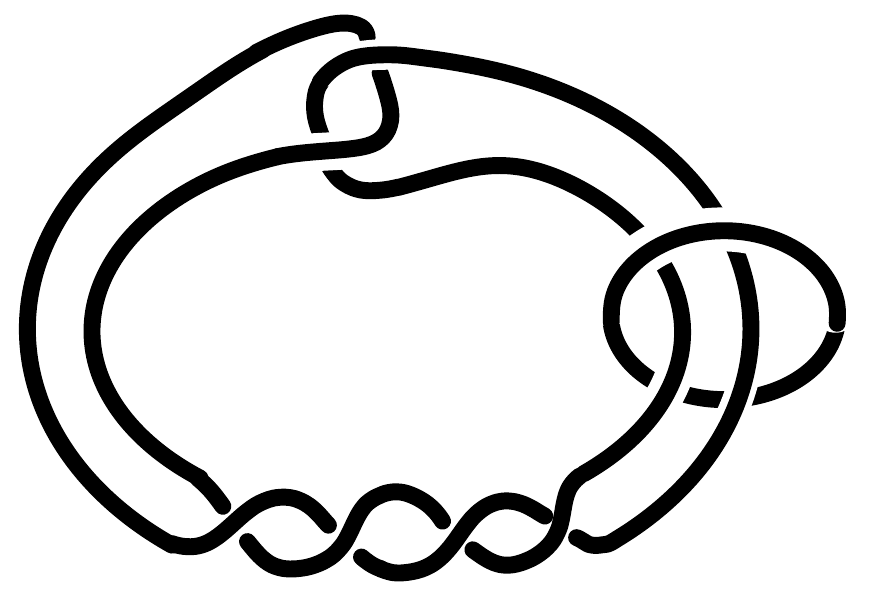}
    \caption{$W_4 = \overline{W_{-5}}$}
    \label{fig:WhiteheadW4}
  \end{subfigure}

  \caption{Figures \ref{fig:23CablingLink} and \ref{fig:WhiteheadW1} depict examples of satellite pattern links we prove are ribbon minimal. We conjecture that the link seen in Figure \ref{fig:WhiteheadW4} is also ribbon minimal.}
  \label{fig:linkExamples}
\end{figure}

\begin{conjecture}\label{conj:W4}
The link ${ W_{4} }$ is ribbon minimal. 
\end{conjecture}

The complication with ${ W_{4} }$ is that it contains the ribbon knot ${ T_{4} = 6_{1} }$ as a component. 
Although we were unable to answer the above using link Floer homology, we can impose serious constraints on a hypothetical ${ L \leq W_{4} }$ and tie Conjecture \ref{conj:W4} to a recent conjecture of Meier and Miller \cite{MM25}. 

\begin{theorem}\label{thm:componentsOfLConcordantToW4}
Suppose ${ L \leq W_{4} }$ and let ${ K }$ be the component of ${ L }$ which corresponds to ${ T_{4} \subset W_{4} }$. 
Then $L$ is not split, ${ K }$ is either the unknot or ${ T_{4} }$. Moreover, if $K=T_4$ then ${ L = W_{4} }$.
\end{theorem}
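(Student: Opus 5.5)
Suppose $C\colon L \to W_4$ is a strong ribbon concordance. A strong concordance consists of annuli, one per component, so $L$ has two components $K$ and $K'$. The annuli give ribbon concordances $K \to T_4$ and $K' \to \mu$. Since $\mu$ is the unknot, Gordon's result (the Alexander polynomial of $K'$ divides that of $\mu$) together with Zemke's injection $\widehat{\mathrm{HFK}}(K')\hookrightarrow\widehat{\mathrm{HFK}}(U)\cong\mathbb{F}$ forces $K'$ to be unknotted.

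For $K$, use Zemke's grading-preserving inclusion $\widehat{\mathrm{HFK}}(K)\hookrightarrow \widehat{\mathrm{HFK}}(T_4)$ and Gilmer's divisibility $\Delta_K \mid \Delta_{T_4}$. Since $T_4=6_1$ has $\Delta_{T_4}=(2t-1)(t-2)$ up to units and symmetrization, and Alexander polynomials of knots satisfy $\Delta(1)=\pm1$, the only possibilities are $\Delta_K=1$ or $\Delta_K=\Delta_{T_4}$. Also $g(K)\le g(T_4)=1$.
\begin{itemize}
\item If $\Delta_K=\Delta_{T_4}$, then $\widehat{\mathrm{HFK}}(K)$ has Euler characteristic $\Delta_{T_4}$ and injects into the thin group $\widehat{\mathrm{HFK}}(T_4)$ of total rank $9$. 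Grading considerations, with each Alexander grading occupying a single Maslov grading, force the inclusion to be an isomorphism. Since $\widehat{\mathrm{HFK}}$ detects the twist knots of genus one (or at least $6_1$, by Baldwin--Sivek-type results), we get $K=T_4$.
\item If $\Delta_K=1$, then $\widehat{\mathrm{HFK}}(K)$ has rank $1$ in Alexander grading $0$ plus possibly cancelling pairs in gradings $\pm1$. The inclusion into $\widehat{\mathrm{HFK}}(T_4)$ bounds this rank. I would try to rule out the extra genus-one summands by noting that a nontrivial genus-one knot with $\Delta=1$ would need $\widehat{\mathrm{HFK}}$ in grading $1$ to have rank at least $2$, and then checking compatibility with the Maslov gradings of $T_4$. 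If $K$ is unknotted this case is finished. If it cannot be settled, I would fall back on genus detection: rank-$1$ top group means fibered, and the only fibered genus-one knots are the trefoils and the figure-eight, whose Alexander polynomials are not $1$. I expect this step to be the main obstacle.
\end{itemize}

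Non-splitness: suppose $L$ is split. By Proposition \ref{prop:splitGeneric}, a strong ribbon concordance from a split link should behave compatibly with the splitting, so the linking structure of $W_4$ would be constrained. Concretely, I would use the link Floer inclusion $\widehat{\mathrm{HFL}}(L)\hookrightarrow\widehat{\mathrm{HFL}}(W_4)$ together with the multivariable Alexander polynomial divisibility (a Gilmer-type statement for links). A split link has vanishing multivariable Alexander polynomial, equivalently $\widehat{\mathrm{HFL}}$ of a split link carries an extra tensor factor $V$ of rank $2$. Comparing Thurston-norm/genus detection in the two-variable gradings, $W_4$ has nondegenerate Thurston polytope since it is hyperbolic, while $L$ would have to occupy symmetric gradings that do not embed. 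Alternatively, $\pi_1$: Gordon's result makes $\pi_1$ of the complement of $L$ embed in a quotient related to $W_4$. I would pursue the $\widehat{\mathrm{HFL}}$ polytope argument first.

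Finally, if $K=T_4$, then both components of $L$ agree with those of $W_4$, and the concordance is componentwise ribbon concordance between identical knots. I would argue that the inclusion $\widehat{\mathrm{HFL}}(L)\hookrightarrow \widehat{\mathrm{HFL}}(W_4)$ must be an isomorphism. The idea is that the component knot Floer groups and the linking number are fixed, and the spectral sequence from $\widehat{\mathrm{HFL}}$ to $\widehat{\mathrm{HFK}}$ of the components, together with the symmetry of $\widehat{\mathrm{HFL}}$, pins down the total rank. Then the detection of $W_n$ by $\widehat{\mathrm{HFL}}$ from \cite{BD24} gives $L=W_4$.
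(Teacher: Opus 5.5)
\textbf{Gap: the non-splitness step.} Your treatment of the components is sound and agrees with the paper. The $\mu$-component is forced to be unknotted by the injection into $\widehat{\mathrm{HFK}}(U)$, and for $K$ Gilmer's theorem gives $\Delta_K\in\{1,\Delta_{T_4}\}$. The case $\Delta_K=1$ is not actually an obstacle. Since $\widehat{\mathrm{HFK}}(T_4)$ is thin, each Alexander grading lies in a single Maslov grading, so a graded subspace has no internal cancellation. Euler characteristic $1$ therefore forces rank one in Alexander grading $0$, and unknot detection finishes. Your last step ($K=T_4\Rightarrow L=W_4$ via the component-removal spectral sequence and \cite{BD24}) is also the paper's route. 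The paper, however, runs it only after pinning down $\chi(\widehat{\mathrm{HFL}}(L))$ through $\Delta_L=\Delta_{W_4}$, and that equality needs $\Delta_L\neq 0$ together with Davis' theorem.

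Proposition \ref{prop:splitGeneric} does not help here, because it concerns a split \emph{target}. Here the target $W_4$ is non-split and only the source is hypothetically split. Split sources below non-split targets genuinely occur: an unlink lies below any link bounding disjoint ribbon disks, for example the Brunnian links mentioned in the introduction. So no general principle forces the concordance to respect a splitting of $L$.

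Your $\widehat{\mathrm{HFL}}$ polytope argument points the wrong way. The inclusion $\widehat{\mathrm{HFL}}(L)\hookrightarrow\widehat{\mathrm{HFL}}(W_4)$ only places the polytope of $L$ inside that of $W_4$, and a degenerate polytope fits inside a nondegenerate one. Concretely, for split $L=K\sqcup U$ one has $\widehat{\mathrm{HFL}}(L)\cong\widehat{\mathrm{HFK}}(K)\otimes V$, supported in gradings $(a_1,0)$. As the paper points out, $\widehat{\mathrm{HFL}}(W_4)$ has Euler-characteristic-zero pieces in exactly those gradings. Grading bookkeeping therefore cannot exclude a split $L$ whose $T_4$-component has become an unknot. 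At best it yields, via Ni's Thurston-norm detection, that $\Delta_L=0$ forces $L$ to be split, which is the reverse implication. Your $\pi_1$ alternative has the same problem, since a free product can embed in a quotient of a link group.

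The missing idea is a concordance invariant that vanishes on split links but not on $W_4$. The paper uses the Sato--Levine invariant, with $\beta(W_4)=1$; equivalently you could use $\bar\mu(1122)$ or the $z^3$-coefficient of the Conway polynomial of this linking-number-zero link. Since $L$ is strongly concordant to $W_4$, this immediately shows $L$ is not split. The paper then combines non-splitness with Ni's formula to get $\Delta_L\neq 0$, and Gilmer, Torres and Davis give $\Delta_L=\Delta_{W_4}$, which feeds the final identification.
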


\begin{remark}\label{rem:W4minimality}
Theorem \ref{thm:componentsOfLConcordantToW4} says ${ W_{4} }$ is ribbon minimal if and only if the twist knot component ${ T_{4} \subset W_{4} }$ is not ribbon concordant to the unknot in the complement of ${ \mu \times I \subset S^{3} \times I }$. This could perhaps be answered via a careful account of how many ribbon disks the knot ${ T_{4}= 6_{1} }$ has up to isotopy rel. boundary and modulo local knotting. It is conjectured in \cite{MM25} that ${ 6_{1} }$ has finitely many distinct slice disks modulo local knotting. 
\end{remark}

\section{Acknowledgments}

We thank Fraser Binns, Jen Hom, and JungHwan Park for comments on an earlier draft and suggested references. The author also thanks his advisor, Akram Alishahi, for her support and guidance. 
The author acknowledges the assistance of ChatGPT in locating the transfinite nilpotence obstruction strategy used in the proof of Theorem~\ref{thm:notTransfinitelyNilpotent}.  
This work was partially supported by the National Science Foundation under the grants DMS-2238103 and DMS-2342252.
\section{Background}
\subsection{3-manifold topology}\label{sec:background3manifold}

Let $Y$ be an orientable 3-manifold, possibly with boundary. It is \textit{irreducible} if, given any embedding $\phi: S^2 \hookrightarrow Y$, its image bounds an embedded 3-ball. 
A surface ${ \Sigma \subset Y^{3} }$ is \textit{compressible} if there is a simple-closed curve in ${ \Sigma }$ representing a non-zero class in ${\pi_{1}( \Sigma) }$ which bounds a properly embedded disk in ${ \overline{Y^{3} \setminus \nu\Sigma } }$. Otherwise, we call the surface \textit{incompressible}. Given $\partial Y \neq \emptyset$, we say it is \textit{boundary irreducible} if $\partial Y$ is incompressible. An irreducible 3-manifold $Y$ is \textit{Haken} if it contains a properly embedded, incompressible surface with trivial normal bundle (aka ``two-sided") which is not a sphere and, if $\partial Y \neq \emptyset$, is not boundary parallel. 

Letting $Y$ and $Y'$ be 3-manifolds, a map $\phi : \pi_1(Y) \rightarrow \pi_1(Y')$ \textit{respects the peripheral structure} if for each component $F' \subset \partial Y'$, there is a component $F \subset \partial Y$ such that $\phi(i_{\ast}(\pi_1(F)))$ is contained in a subgroup conjugate to $i_{\ast}(\pi_1(F'))$. We now have all the language required to state a version of a classical theorem of Waldhausen.

\begin{theorem}[\cite{Wal68}, Corollary 6.5]\label{thm:Waldhausen}
If $M$ and $N$ are Haken 3-manifolds with boundary and ${\phi: \pi_1(N) \rightarrow \pi_1(M)}$ is an injection which respects the peripheral structure, then there is a covering map $f: N \rightarrow M$ which induces $\phi$. 
Moreover, if $\phi$ is an isomorphism respecting the peripheral structure, then there is a homeomorphism $f: N \rightarrow M$ whose induced map on ${ \pi_{1} }$ is $\phi$.
\end{theorem}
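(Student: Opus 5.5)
The plan is to follow Waldhausen's original strategy: first realize $\phi$ by a map of pairs $(N,\partial N)\to(M,\partial M)$, then straighten that map by induction along a hierarchy. I take the isomorphism statement as the core and derive the covering statement from it.

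\emph{Step 1: build a map of pairs inducing $\phi$.} Since $N$ and $M$ are Haken with nonempty boundary, they are irreducible with infinite fundamental group (or are balls), so they are aspherical. In particular $M$ is a $K(\pi_1 M,1)$.
\begin{enumerate}
\item For each boundary component $F'\subset\partial N$, the peripheral hypothesis says $\phi(i_\ast\pi_1 F')$ lies, up to conjugation, in $i_\ast\pi_1 F$ for some component $F\subset\partial M$. Since $F$ is aspherical (it is incompressible and not a sphere), I realize this restriction by a map $F'\to F$.
\item I then extend over $N$ skeleton by skeleton. The obstructions lie in $H^{k}(N,\partial N;\pi_{k-1}M)$, and these vanish for $k\ge 3$ by asphericity of $M$. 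The $1$- and $2$-skeleta are handled because the map on $\pi_1$ is already prescribed by $\phi$.
\end{enumerate}
This gives $f:(N,\partial N)\to(M,\partial M)$ with $f_\ast=\phi$.

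\emph{Step 2: the isomorphism case.} Assume $\phi$ is an isomorphism. I induct on the length of a hierarchy for $M$.
\begin{enumerate}
\item Choose a two-sided incompressible, non-boundary-parallel surface $S\subset M$, as provided by the Haken hypothesis.
\item Make $f$ transverse to $S$, so that $f^{-1}(S)$ is a properly embedded surface in $N$.
\item Using irreducibility of $N$ and $\pi_1$-injectivity of $f$, simplify $f^{-1}(S)$ by a homotopy of $f$. Compressing disks are removed by surgery. Sphere components are removed because they bound balls. Boundary-compressions are removed using the peripheral condition.
\item After these moves, $f^{-1}(S)$ is incompressible.
\item Argue that $f$ restricts to a map $f^{-1}(S)\to S$ which is $\pi_1$-surjective and has degree one. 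By the classical result for surfaces (Nielsen and Kneser), it is homotopic to a homeomorphism, and $f^{-1}(S)$ has a single component.
\item Cut $N$ along $f^{-1}(S)$ and $M$ along $S$. The cut pieces are again Haken, and the restricted maps induce isomorphisms on $\pi_1$ compatible with the new boundary patterns.
\item Continue down the hierarchy to a union of balls, where the map is homotopic rel boundary to a homeomorphism by Alexander's trick.
\item Reglue to obtain a homeomorphism $N\to M$ inducing $\phi$ up to conjugation.
\end{enumerate}

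\emph{Step 3: the injective case.} Let $\widetilde M\to M$ be the cover corresponding to $\phi(\pi_1N)$, and lift $f$ to $\tilde f:N\to\widetilde M$, which is a $\pi_1$-isomorphism.
\begin{enumerate}
\item Use the peripheral condition and a degree argument on $H_3(N,\partial N)$ to show $\tilde f$ has nonzero degree onto a compact piece. From this I deduce that $\widetilde M$ is compact, i.e.\ the cover has finite index.
\item Then $\widetilde M$ is Haken, and Step 2 gives a homeomorphism $N\cong\widetilde M$. Composing with the projection $\widetilde M\to M$ gives the required covering map $N\to M$ inducing $\phi$.
\end{enumerate}

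\emph{Main obstacle.} I expect the hardest part to be the inductive step in Step 2: showing that $f^{-1}(S)$ can be made connected and mapped homeomorphically onto $S$, while keeping control of boundary patterns so that the induction hypothesis genuinely applies to the cut pieces. My first approach would be to track the degree of $f$ on each cut piece and use $\pi_1$-surjectivity to rule out extra components of $f^{-1}(S)$.
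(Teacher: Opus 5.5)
The paper gives no proof of this statement. It is quoted from \cite{Wal68}, and only the isomorphism clause is ever used: in the proof of Theorem~\ref{thm:linkRibbonOrder}, $(\iota_{1/2})_\ast$ is shown to be an isomorphism before Waldhausen is invoked. Judged on its own, your Step~3 has a gap that cannot be closed, because the covering clause is false as stated. Let $N=T^2\times I$, let $M$ be the trefoil complement, and let $\phi$ be the inclusion of the peripheral subgroup $\mathbb{Z}^2=\pi_1(\partial M)$ into $\pi_1(M)$. Both manifolds are Haken, since a vertical annulus in $T^2\times I$ is essential. The map $\phi$ is injective because $\partial M$ is incompressible. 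Each boundary torus of $N$ is carried \emph{onto} the peripheral subgroup of $M$, so $\phi$ respects the peripheral structure under any reading of the definition. But a covering between compact manifolds has finitely many sheets. Every finite-index subgroup of the trefoil group meets its free commutator subgroup $F_2$ in a nonabelian free group, so $\phi(\pi_1N)\cong\mathbb{Z}^2$ cannot have finite index, and no covering $N\to M$ exists. The map of pairs realizing $\phi$ is the projection $T^2\times I\to T^2=\partial M$, which has degree $0$. This is precisely what your ``degree argument on $H_3(N,\partial N)$'' would have to exclude, and nothing in the hypotheses excludes it. Waldhausen's covering theorem additionally assumes that $f|_{\partial N}\colon\partial N\to\partial M$ is already a covering map. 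Even then it has an exceptional case in which $N$ is an $I$-bundle and $f$ deforms into $\partial M$, and the example above is exactly that case. You should drop the covering clause, or import those hypotheses together with the exception, rather than try to prove it.

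For the isomorphism clause, Step~2 follows Waldhausen's hierarchy strategy, but it leaves out the mechanism that makes the induction close. That is why item~5 has no argument behind it, and two things are missing. First, $f$ must be homotoped so that $f|_{\partial N}\colon\partial N\to\partial M$ is a homeomorphism. This needs $\phi$ to match boundary components bijectively and to carry each $\pi_1F'$ onto a conjugate of the corresponding $\pi_1F$; then Nielsen's theorem for closed surfaces applies. Mere containment, as in the paper's definition, does not give this directly; note also that the paper's quantifiers run from $\partial M$ to $\partial N$, not as in your Step~1. Your Step~1 also uses incompressibility of $\partial M$. That is a hypothesis in \cite{Wal68}, but it is not part of the paper's definition of Haken. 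Second, the statement carried through the induction should not be ``$f_\ast$ is an isomorphism.'' It should be: ``if $f_\ast$ is injective and $f|_{\partial N}$ is a covering, then $f$ is homotopic rel $\partial N$ to a covering, up to the $I$-bundle exception.'' Once $f^{-1}(S)$ is incompressible, the boundary-covering hypothesis, fed into the two-dimensional analogue of the same theorem, makes $f|_{f^{-1}(S)}\colon f^{-1}(S)\to S$ deformable to a covering. The hypothesis is inherited by the cut pieces, whose new boundary consists of copies of $f^{-1}(S)$ and of $S$. This avoids having to show that $f^{-1}(S)$ is connected or that the cut pieces are $\pi_1$-isomorphic, which is the step you flag as the main obstacle. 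At the end, the covering $N\to M$ restricts to the homeomorphism $f|_{\partial N}$ on the boundary, so it is one-sheeted.
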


A link is a smooth, locally flat embedding of a disjoint union of circles into an orientable 3-manifold. A knot is a link with a single component. Unless otherwise specified, all links in this note are assumed to be embedded in ${ S^{3} }$. Any link in ${ S^{3} }$ bounds an orientable, connected \textit{Seifert surface} and we define
$$\chi_3(L) = \max \left\{\chi(F) \: |\; \substack{F \text{ a Seifert} \\ \text{surface for }L}\right\} $$
Gordon and Luecke showed \cite{GL89} that knots in $S^3$ are determined by the orientation-preserving homeomorphism type of their complements; this is false for links with ${ n>1 }$ components. 
However, given an orientation-preserving homeomorphism $f: S^3 \setminus \nu L_0 \rightarrow S^3 \setminus \nu L_1$ and a meridian $\mu_i$ for each component of $L_0$, if the isotopy classes of the $\mu_i$ are preserved by $f$, then the two links are isotopic. See \cite{Lac16} and references contained therein.

\subsection{Infinite cyclic covers of link complements}\label{sec:infiniteCyclicCovers}

Let ${ L = \sqcup_{i=1}^{n} L_{i} }$ be a link in ${ S^{3} }$, let ${ \mu_{i} }$ denote the meridian of ${ L_{i} }$, and let $Y := S^3 \setminus \nu L$.
Each ${ \mu_{i} }$ represents a class in ${ \pi_{1}(Y) }$; consider the group homomorphism ${ \phi : \pi_{1}(S^{3 } \setminus \nu L) \rightarrow \mathbb{Z}^{n} }$ defined by  assigning ${ [\mu_{i}] }$ to the ${ i^{th} }$ element in the standard basis of ${ \mathbb{Z}^{n} }$. The \textit{maximal abelian cover} ${ p : \widetilde{Y^{\infty}} \rightarrow  Y }$ is the covering space associated to ${ \phi }$: in particular, the group of deck transformations is ${ \mathbb{Z}^{n} }$ and 
$${ p_{\ast}(\pi_{1}(\widetilde{Y^{\infty}})) \cong \ker(\phi) \leq \pi_{1}(S^{3} \setminus \nu L) }$$The kernel of ${ \phi }$ is sometimes called the \textit{augmentation group} of ${ L }$.

\begin{definition}\label{defn:AlxPolys}

The Alexander module of a link $L$ in $S^3$ is $A_L :=  H_1(\widetilde{Y^{\infty}}, \mathbb{Z}[t_1^{\pm 1},\dots, t_n^{\pm 1}])$ and the classical multivariable Alexander polynomial $\Delta_L$ is the order ideal of $A_L$, denoted $ord(A_L)$. 
Let $TA_L$ denote the torsion submodule of $A_L$, then the \textit{torsion Alexander polynomial} is defined to be 
$$\Delta_L^{tor} := \begin{cases} ord(TA_L) & TA_L \neq 0 \\ 
1 & \text{else}\end{cases}$$
\end{definition}
We require a classical result of Torres relating the Alexander polynomial of a link to those of its sublinks.
\begin{theorem}[The Torres Condition, \cite{Tor53}, pg. 1]\label{thm:TorresCondition}
Let $L = L_1 \sqcup \cdots \sqcup L_n$ be a link with $n\geq 2$ components. Set $\ell_i = \ell k (L_i,L_n)$ and let $L' = L \setminus L_n$. 
The Alexander polynomial of $L$ satisfies the conditions 
$$\Delta_L(t_1, \cdots, t_{n-1},1) = \begin{cases}
\frac{t_1^{\ell_1}-1}{t_1-1} \Delta_{L'}(t_1) & n = 2 \\
(\prod_{i=1}^{n-1} t_i^{\ell_i} -1) \Delta_{L'}(t_1, \cdots, t_{n-1}) & n > 2

\end{cases}$$

\end{theorem}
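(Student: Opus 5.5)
Since this is Torres' classical theorem, I would not follow his original Fox-calculus argument line by line. Instead I would organize the proof through Reidemeister--Milnor torsion and its multiplicativity under gluing. All equalities are up to units $\pm t_1^{a_1}\cdots t_{n}^{a_n}$.

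\emph{Setup.} Let $Y_L = S^3\setminus \nu L$ and $Y_{L'} = S^3\setminus\nu L'$. Then
$$Y_{L'} = Y_L \cup_{T} V,$$
where $V=\nu L_n$ is a solid torus and $T=\partial V$. The identification I would use is Milnor's: for the abelian coefficient system $\phi:\pi_1(Y_L)\to\mathbb{Z}^n$, the torsion satisfies $\tau(Y_L)=\Delta_L$ when $n\ge 2$, and $\tau(Y_K)=\Delta_K(t)/(t-1)$ when $K$ is a knot.

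\emph{Gluing.} Compose $\phi$ with the specialization $t_n\mapsto 1$. This is exactly the coefficient system of $Y_{L'}$ restricted to $Y_L$, since the meridian of $L_n$ dies in $Y_{L'}$. The Mayer--Vietoris short exact sequence of twisted cellular chain complexes over $\mathbb{Q}(t_1,\dots,t_{n-1})$ then gives
$$\tau(Y_{L'})\,\tau(T)=\tau(Y_L)|_{t_n=1}\,\tau(V).$$
The factors are computed as follows:
\begin{itemize}
\item $\tau(T)=1$, because the Euler characteristic vanishes and the twisted homology is acyclic whenever the coefficient system on $T$ is nontrivial.
\item $\tau(V)=(h-1)^{-1}$, where $h=\prod_{i<n}t_i^{\ell_i}$ is the image of the core $L_n$. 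Homologically, $L_n$ winds $\ell_i$ times around the meridian of $L_i$.
\end{itemize}
Hence
$$\Delta_L(t_1,\dots,t_{n-1},1)=(h-1)\,\tau(Y_{L'}).$$
For $n=2$ this equals $(t_1^{\ell_1}-1)\Delta_{L'}(t_1)/(t_1-1)$. For $n>2$, $L'$ still has at least two components, so $\tau(Y_{L'})=\Delta_{L'}$, which gives the second line of the statement.

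\emph{Main obstacle.} The delicate step is justifying that specialization commutes with taking torsion. In general, evaluating $\tau(Y_L)\in\mathbb{Q}(t_1,\dots,t_n)$ at $t_n=1$ is not the same as computing the torsion for the specialized coefficients; the two agree only when the specialized complex remains acyclic. I would handle this in two cases.
\begin{itemize}
\item \emph{Generic case, $h\neq 1$.} I would work directly at the chain level with a CW structure on $Y_L$ having a single $0$-cell and compatible with $T$. The chain complex over $\mathbb{Z}[t^{\pm1}]$ specializes, and its torsion specializes provided the specialized complex is acyclic. Acyclicity follows from the gluing sequence together with acyclicity of $Y_{L'}$ (for knots, after removing the $(t-1)$ factor in the standard way).
\item \emph{Degenerate case, $h=1$ (all $\ell_i=0$).} The right-hand side vanishes, so I would check $\Delta_L(t_1,\dots,t_{n-1},1)=0$ directly. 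I would use the Fox Jacobian of a Wirtinger presentation: after setting $t_n=1$, the row/column relation coming from the longitude of $L_n$ becomes dependent, forcing the relevant minor to vanish.
\end{itemize}
Alternatively, one could bypass the degenerate case by a continuity argument, but the Fox-calculus verification is the route I would try first.
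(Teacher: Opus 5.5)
\textbf{Different route from the paper.} The paper gives no proof of this statement. It simply quotes \cite{Tor53}, where the argument is Fox free differential calculus on a presentation of the link group. Your route through Milnor--Turaev torsion and the gluing $Y_{L'}=Y_L\cup_T V$ is a genuinely different and legitimate approach; it is essentially Turaev's proof. Your bookkeeping is right. With $h=\prod_{i<n}t_i^{\ell_i}$ the image of the core of $V$, the values $\tau(T)=1$, $\tau(V)=(h-1)^{-1}$ and $\tau(Y_{L_1})=\Delta_{L_1}/(t_1-1)$ produce exactly both lines of the statement. This route explains conceptually where the factor $h-1$ comes from, and it treats $n=2$ and $n>2$ uniformly. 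The cost is reliance on Milnor's identification $\tau(Y_L)=\Delta_L$, plus the specialization problem you flagged. As written, the proposal does not fully resolve that problem.

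\textbf{The gap.} In the generic case you derive acyclicity of the specialized complex of $Y_L$ from ``acyclicity of $Y_{L'}$.'' For $n>2$, the complex $C_*(Y_{L'};\mathbb{F})$ with $\mathbb{F}=\mathbb{Q}(t_1,\dots,t_{n-1})$ is acyclic only when $\Delta_{L'}\neq 0$. It fails, for instance, whenever $L'$ is split, which is exactly the situation this paper cares about. In that subcase, and in the case $h=1$, you need the \emph{converse} statement: non-acyclicity forces $\Delta_L(t_1,\dots,t_{n-1},1)=0$. The principle ``torsion specializes when the specialized complex is acyclic'' does not give this. Your Fox-calculus sketch for $h=1$ (``the longitude row becomes dependent'') is not yet an argument either. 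For $n=2$ there is no issue: a knot complement is always acyclic over $\mathbb{Q}(t_1)$ because $\Delta_{L_1}(1)=\pm1$, so no ``removal of $(t-1)$'' is needed.

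\textbf{How to fix it.} Make the chain-level formula explicit, so that specialization commutes with torsion unconditionally.
\begin{itemize}
\item Collapse $Y_L$ to a 2-complex with one $0$-cell, $m$ one-cells and $m-1$ two-cells, for example a Wirtinger complex from a connected diagram.
\item Let $e$ be a $1$-cell mapping to $t_1$, not to $t_n$, since $t_n-1$ dies under specialization.
\item Then $\tau(Y_L)=\det(\partial_2^{\hat e})/(t_1-1)$, where $\partial_2^{\hat e}$ deletes the column of $e$.
\item Because $t_1-1$ survives $t_n\mapsto 1$, the specialized complex is acyclic if and only if the specialized minor is nonzero, that is, if and only if $\Delta_L(t_1,\dots,t_{n-1},1)\neq 0$.
\end{itemize}
With the convention that $\tau=0$ for non-acyclic complexes, evaluation and torsion now agree in all cases. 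Both degenerate cases therefore reduce to showing $H_*(Y_L;\mathbb{F})\neq 0$, which Mayer--Vietoris for $Y_{L'}=Y_L\cup_T V$ gives directly.
\begin{itemize}
\item If $h\neq 1$ and $\Delta_{L'}=0$, then $T$ and $V$ are acyclic, so $H_*(Y_L;\mathbb{F})\cong H_*(Y_{L'};\mathbb{F})\neq 0$.
\item If $h=1$, the local system on $T$ is trivial, so $H_2(T;\mathbb{F})\cong\mathbb{F}$. Since $H_3(Y_{L'};\mathbb{F})=0$ (compact with nonempty boundary) and $H_2(V;\mathbb{F})=0$, exactness makes $H_2(T;\mathbb{F})\to H_2(Y_L;\mathbb{F})$ injective. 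Hence $\Delta_L(t_1,\dots,t_{n-1},1)=0=(h-1)\,\tau(Y_{L'})$.
\end{itemize}
With these two repairs your argument is complete.
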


The relationship between the (torsion) Alexander polynomial and ribbon concordance was established by Gilmer. 
\begin{theorem}[\cite{Gil84}]\label{thm:GilmersTheorem}
If $L_0 \leq L_1 $, then $\Delta^{tor}_{L_0}$ divides $\Delta^{tor}_{L_1}$.
\end{theorem}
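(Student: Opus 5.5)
We would follow Gilmer's handle-theoretic argument. Throughout, Alexander modules are computed over $\Lambda=\mathbb{Z}[t_1^{\pm1},\dots,t_n^{\pm1}]$ using the covers induced from $\phi$. Let $C$ be a strong ribbon concordance from $L_0$ to $L_1$, and let $X_i=S^3\setminus\nu L_i$ and $W=(S^3\times I)\setminus \nu C$. Because $C$ consists of annuli, the meridians of corresponding components are homologous in $W$. Hence $\phi$ extends over $\pi_1(W)$, and we get compatible $\mathbb{Z}^n$-covers of $X_0$, $X_1$ and $W$.

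\emph{Step 1: handle structure of $W$.} Since $C$ has only $0$- and $1$-handles, $W$ is obtained from $X_0\times I$ by attaching one $1$-handle for each minimum and one $2$-handle for each saddle. Each component of $C$ is an annulus, so $\chi(C)=0$, and the numbers of minima and saddles are both equal to some $k$. Turning the cobordism upside down, $W$ is obtained from $X_1\times I$ by attaching $k$ $2$-handles and $k$ $3$-handles.

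\emph{Step 2: relative homology.} The relative cellular chain complex $C_*(W,X_0;\Lambda)$ is therefore $0\to\Lambda^k\xrightarrow{\partial}\Lambda^k\to 0$, in degrees $2$ and $1$. Since $C$ is a concordance, the inclusions induce isomorphisms on integral homology. So the augmentation of $\det\partial$ is $\pm1$, and in particular $\det\partial=:f\neq 0$. Hence $H_2(W,X_0;\Lambda)=0$ and $H_1(W,X_0;\Lambda)=\operatorname{coker}\partial$ is torsion of order $f$. On the other side, $H_*(W,X_1;\Lambda)$ is concentrated in degrees $2,3$, so $H_1(W,X_1;\Lambda)=0$ and $H_1(X_1;\Lambda)\to H_1(W;\Lambda)$ is surjective. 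Equivariant Poincar\'e--Lefschetz duality, $H_*(W,X_1;\Lambda)\cong \overline{H^{4-*}(W,X_0;\Lambda)}$, together with the universal coefficient spectral sequence, identifies the kernel of this surjection with a torsion module of order $\bar f$ (up to units). Similarly, the sequence for $(W,X_0)$ together with the fact that $\pi_1(X_0)\to\pi_1(W)$ is injective (Gordon) should show that $H_1(X_0;\Lambda)\to H_1(W;\Lambda)$ is injective with torsion cokernel of order $f$.

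\emph{Step 3: pass to torsion submodules and multiply orders.} From the two short exact sequences
$$0\to A_{L_0}\to H_1(W;\Lambda)\to T_f\to 0,\qquad 0\to T_{\bar f}\to A_{L_1}\to H_1(W;\Lambda)\to 0,$$
where $T_f$ and $T_{\bar f}$ are torsion, we restrict to torsion submodules. Orders are multiplicative in short exact sequences of torsion modules, and ranks agree after tensoring with the fraction field. Combining these yields
$$\Delta^{tor}_{L_1}\doteq \Delta^{tor}_{L_0}\cdot g\cdot \bar g$$
for some $g$ dividing $f$, and hence the divisibility $\Delta^{tor}_{L_0}\mid\Delta^{tor}_{L_1}$.

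\emph{Main obstacle.} We expect the hardest point to be Step 3, specifically controlling how the torsion cokernel $T_f$ can interact with the free part of $A_{L_0}$. Taking torsion submodules is only left exact: a torsion element of $H_1(W;\Lambda)$ could map nontrivially to $T_f$ while coming from a non-torsion class, which would break multiplicativity. We would handle this by working with orders of torsion submodules via localization. Tensoring with the fraction field $Q(\Lambda)$ kills $T_f$ and $T_{\bar f}$, so the ranks of $A_{L_0}$, $H_1(W;\Lambda)$ and $A_{L_1}$ agree. With equal ranks, the induced maps restrict to maps of torsion submodules whose kernels and cokernels are subquotients of $T_f$ and $T_{\bar f}$. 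Multiplicativity of orders, applied to these restricted sequences, then gives the divisibility.
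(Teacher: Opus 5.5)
The paper gives no proof of this statement; it is quoted from \cite{Gil84}. Your proposal instead gives a self-contained argument using the relative handle structure of the concordance exterior and equivariant homology over $\Lambda$. Its core is sound.
\begin{itemize}
\item From the two-term complex you get $H_2(W,X_0;\Lambda)=0$ and $H_1(W,X_0;\Lambda)=\mathrm{coker}\,\partial$, of order $f$.
\item From the upside-down decomposition and duality you get $H_1(W,X_1;\Lambda)=0$, and $H_2(W,X_1;\Lambda)\cong\overline{\mathrm{coker}\,\partial^{T}}$ is torsion of order $\bar f$. This can be read off the transposed complex; no spectral sequence is needed.
\item Write $H=H_1(W;\Lambda)$. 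Then $TA_{L_0}=A_{L_0}\cap TH$, with $TH/TA_{L_0}\hookrightarrow H/A_{L_0}\cong H_1(W,X_0;\Lambda)$.
\item The map $TA_{L_1}\to TH$ is onto, with kernel $K=\ker(A_{L_1}\to H)$, because $K$ is torsion.
\item Multiplicativity of orders over the Noetherian UFD $\Lambda$ then gives $\Delta^{tor}_{L_1}\doteq\Delta^{tor}_{L_0}\cdot g\cdot h$ with $g\mid f$ and $h\mid\bar f$. This is the claimed divisibility.
\end{itemize}
Compared with the bare citation, your route makes the multicomponent, possibly non-torsion case explicit. When $A_{L_0}$ is torsion it also recovers the sharper factorization $\Delta^{tor}_{L_1}\doteq\Delta^{tor}_{L_0}\,f\bar f$: then $H_2(X_0;\Lambda)=0$, hence $H_2(W;\Lambda)=0$, so $K\cong H_2(W,X_1;\Lambda)$ and $TH=H$.

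Several statements in your write-up need correcting, though the divisibility survives once they are fixed.
\begin{itemize}
\item \emph{Injectivity of $A_{L_0}\to H$.} This is immediate from $H_2(W,X_0;\Lambda)=0$ in the long exact sequence. Gordon's $\pi_1$-injectivity is beside the point and would not suffice, since injectivity of $\pi_1(\widetilde{X_0})\to\pi_1(\widetilde{W})$ does not pass to abelianizations.
\item \emph{The order of $K$.} You only know that $K$ is a quotient of $H_2(W,X_1;\Lambda)$. Asserting that it has order $\bar f$ requires $H_2(W;\Lambda)\to H_2(W,X_1;\Lambda)$ to vanish. You have not shown this, and it is unclear once $A_{L_0}$ has positive rank, since then $H_2(X_0;\Lambda)\neq 0$.
\item \emph{The formula with $g\bar g$.} This is unsupported for the same reason. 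The two correction factors come from different exact sequences, and you have no duality argument relating them; what the argument gives is $g\cdot h$ as above.
\item \emph{Surjectivity of $TA_{L_1}\to TH$.} This is the fact that actually makes the order computation work: lift a torsion class and multiply by an annihilator to land in the torsion module $K$. A nonzero cokernel there would break the divisibility, so state it explicitly rather than folding it into ``kernels and cokernels are subquotients.''
\end{itemize}
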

\begin{remark}
For $K$ a knot, one has $A_K = TA_K$ and therefore $\Delta_K=\Delta_K^{tor}$. This is not the case for links in general and, when $A_L$ is not torsion, one has $\Delta_L = 0$. This happens, for example, if $L$ is split.
\end{remark}

Note that a cohomology class ${ \eta \in H^{1}(Y;\mathbb{Z}) }$ defines a group homomorphism ${ \mathbb{Z}^{n} \rightarrow \mathbb{Z} }$, so one may compose ${ \eta \circ \phi : \pi_{1}(S^{3} \setminus \nu L) \rightarrow \mathbb{Z} }$ and use this to obtain an infinite cyclic cover ${ \pi_{\eta} : \widetilde{Y^{\eta}} \rightarrow Y }$  satisfying \newline ${ (\pi_{\eta})_{\ast}\pi_{1}(\widetilde{Y^{\eta}}) = \ker(\eta\circ \phi) }$. 
In particular, ${ H_{1}(\widetilde{Y^{\eta}}, \pi_{\eta}^{-1}(p);\mathbb{Z}) }$ is a ${ \mathbb{Z}[t^{\pm 1}] }$-module where the action of ${ t }$ is induced by a generator of the group of deck transformations.

Recall that a link $L$ is \textit{fibered} if its complement is the total space of a fiber bundle over the circle 
$$F \rightarrow  S^{3} \setminus \nu L \rightarrow S^{1}$$
and each fiber ${ F }$ is a Seifert surface for ${ L }$. Letting ${ \eta }$ be a generator of ${ H^{1}(S^{1};\mathbb{Z}) }$, its pullback ${ \pi^{\ast}(\eta) \in H^{1}(S^{3}\setminus \nu L) }$ is called the \textit{fiber class} of ${ S^{3}\setminus \nu L }$. Letting ${ C : L_{0} \rightarrow L_{1} }$ be a ribbon concordance, we adopt the notation
$${ Y_{i} := S^{3} \setminus \nu L_{i} \qquad  X := (S^{3} \times I) \setminus \nu C \qquad \iota_{i} : Y_{i} \xrightarrow{\text{inclusion}} X   }$$
Choosing a cohomology class ${ \omega \in H^{1}(X;\mathbb{Z}) }$, one can likewise construct a cover ${ \pi_{\omega} :  \widetilde{X^{\omega}} \rightarrow X }$ of ${ X }$.
Letting ${ \eta_{i} := \iota_{i}^{\ast}(\omega) }$, the covers ${ \widetilde{Y_{i}^{\eta_{i}}} }$ include along maps ${ \widetilde{\iota_{i}} : \widetilde{Y_{i}^{\eta_{i}}} \rightarrow \widetilde{X^{\omega}} }$.
Recent work of Sun \cite{Sun26} tells us that if ${ C : L_{0} \rightarrow L_{1} }$ is a strong ribbon concordance, then ${ L_{1} }$ being fibered implies ${ L_{0} }$ is fibered. Moreover, he shows that there is a class ${ \omega \in H^{1}(X;\mathbb{Z}) }$ such that ${ \iota_{i}^{\ast}(\omega) }$ is the fiber class of ${ Y_{i} = S^{3}\setminus \nu L_{i} }$. 
Hence, to a strong ribbon concordance ${ C }$ of links terminating at a fibered link, there is a naturally associated infinite cyclic cover ${ \widetilde{X^{\omega}} }$ which we call the \textit{fibered cover of the concordance complement}.

We recall a version of Poincar\'{e}-Lefschetz duality for infinite cyclic covers which was described by Milnor \cite{Mil68}.

\begin{theorem}[Milnor duality]\label{thm:MilnorDuality}
Fix a field ${ \mathbb{F} }$.
Let ${ M }$ be a closed ${ n }$-manifold and ${ \widetilde{M} }$ an orientable infinite cyclic covering of ${ M }$. 
If ${ H_{\ast}(\widetilde{M}; \mathbb{F}) }$ is finitely generated then the cup product yields an orthogonal pairing
$$\smile \;:\;  H^{i}(\widetilde{M};\mathbb{F}) \otimes H^{n-i-1}(\widetilde{M}; \mathbb{F}) \rightarrow H^{n-1}(\widetilde{M};\mathbb{F}) \cong \mathbb{F}    $$
Moreover, if ${ M }$ is a compact ${ n }$-manifold with ${ \partial M \neq \emptyset }$, then we have a relative version of the above pairing
$$\smile \;:\;  H^{i}(\widetilde{M};\mathbb{F}) \otimes H^{n-i-1}(\widetilde{M}, \partial \widetilde{M}; \mathbb{F}) \rightarrow H^{n-1}(\widetilde{M},\partial \widetilde{M};\mathbb{F}) \cong \mathbb{F}    $$

\end{theorem}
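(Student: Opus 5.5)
The statement to prove is Milnor duality (Theorem \ref{thm:MilnorDuality}). I would reduce it to ordinary Poincar\'e--Lefschetz duality on a compact fundamental domain, together with a Wang-type exact sequence. Let $t$ generate the deck group of $\widetilde{M}$, and choose a closed, two-sided hypersurface $V\subset M$ dual to the class defining the cover. Cut $M$ along $V$ to get a compact cobordism $W$ from $V$ to $V$. Then $\widetilde{M}=\bigcup_{k\in\mathbb{Z}} t^kW$, with consecutive copies glued along lifts of $V$.

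\textbf{Step 1: a finite window carries everything.} Since $H_*(\widetilde M;\mathbb{F})$ is finite dimensional, it is torsion over the PID $\mathbb{F}[t^{\pm1}]$. Using the Mayer--Vietoris (Wang) sequence
$$\cdots\to H_*(V)\otimes\mathbb{F}[t^{\pm1}]\xrightarrow{\,t\,i_+-i_-\,}H_*(W)\otimes\mathbb{F}[t^{\pm1}]\to H_*(\widetilde M)\to\cdots,$$
I would show that for a large enough window $W_N=\bigcup_{|k|\le N}t^kW$ the following holds. The inclusion $W_N\hookrightarrow\widetilde M$ is surjective on $H_*$. Moreover, $H_*(\widetilde M)$ is identified both with $H_*(W_N)$ modulo the classes supported near the two boundary ends $\partial_\pm W_N$, and with the image of $H_*(W_N)\to H_*(W_{N'})$ for $N'\gg N$.

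\textbf{Step 2: duality on the window and passage to the limit.} Apply Lefschetz duality to the compact manifold $W_N$ with boundary $\partial_+W_N\sqcup\partial_-W_N$ (together with the lift of $\partial M$ in the relative case). Let $W_N^+ = W_N\cup(\text{everything above})$ and let $W_N^-$ denote the part below. The key claim is
$$H^i(\widetilde M)\cong H^i(W_N^+\cup W_N^-\text{-end collars})\ \text{stably},$$
which I would establish from the fact that each end has homology stabilizing to $H_*(\widetilde M)$ (from Step 1). Duality then pairs $H^i(\widetilde M)$ with $H^{n-1-i}$ of the complementary end, and that group is identified with $H^{n-1-i}(\widetilde M)$ in turn. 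Concretely, $\widetilde M$ has two ends, each homotopy-equivalent in $\mathbb{F}$-homology to $\widetilde M$ itself. So $\widetilde M$ behaves like a ``homology $(n-1)$-manifold'' and the pairing lands in $H^{n-1}(\widetilde M)\cong\mathbb{F}$ (resp.\ its relative version). The fundamental class comes from $[V]$, i.e.\ from the boundary of a large window. Nondegeneracy follows because the window pairing is perfect over a field and the stabilization isomorphisms are compatible with cup products.

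\textbf{Step 3: the relative case.} For $\partial M\ne\emptyset$, I would run the same argument with $\widetilde{\partial M}$ carried along, using Lefschetz duality $H^i(W_N)\otimes H^{n-i}(W_N,\partial W_N)\to\mathbb{F}$ and the corresponding relative Wang sequence.

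The main obstacle is Step 2. One must carefully show that the cup product pairing on $\widetilde M$ agrees with the compact-window Lefschetz pairing transported through the stabilization isomorphisms, including the degree shift by one that comes from the connecting map to the ends. The first thing I would try is Milnor's original route. Since $H_*(\widetilde M;\mathbb{F})$ is finite dimensional, the closed-support and compact-support homology of $\widetilde M$ relate via a short exact sequence with $\mathbb{F}[t^{\pm1}]$-torsion. Poincar\'e duality for the noncompact manifold $\widetilde M$, namely $H^i(\widetilde M)\cong H^{lf}_{n-i}(\widetilde M)$, combined with the end-isomorphism $H^{lf}_{n-i}(\widetilde M)\cong H_{n-i-1}(\widetilde M)$ (the boundary map to an end), then gives the pairing directly.
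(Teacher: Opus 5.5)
The paper does not prove this theorem; it quotes it from Milnor \cite{Mil68}. Your final paragraph is essentially Milnor's argument, and it is the right route. Milnor works with compactly supported cohomology: he gets $H^{q+1}_c(\widetilde M)\cong H^q(\widetilde M)$ from the two ends and then uses $H^*_c\cong H_{n-*}$. Your version with locally finite homology is its dual. The problem is that the window argument of Steps 1--2 rests on two claims that are false as written, and the second is exactly the lemma your final paragraph needs.

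\emph{Step 1's first identification.} The claim that $H_*(\widetilde M)$ is $H_*(W_N)$ modulo classes supported near $\partial_\pm W_N$ fails already for $\widetilde M=V\times\mathbb{R}$. There $W_N=V\times[-N,N+1]$ is carried entirely by either boundary component, so the quotient is $0$ rather than $H_*(V)$. Your other description, the image of $H_*(W_N)\to H_*(W_{N'})$ for $N'\gg N$, is correct.

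\emph{Step 2's claim about the ends.} The half-infinite pieces $\widetilde M^+=\bigcup_{k\ge0}t^kW$ are not in general $\mathbb{F}$-homology equivalent to $\widetilde M$. Take $M$ a closed surface bundle over $S^1$ and $V$ a fiber with a trivial tube added. Then $H_1(\widetilde M^+)$ acquires an extra class from the tube, and that class dies in $\widetilde M$.

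\emph{The correct statement concerns the inverse system of ends, and it does not follow from Step 1.} The cellular chains of $\widetilde M^+$ are free of finite rank over $\mathbb{F}[s]$, where $s=t_*$. So $A=H_*(\widetilde M^+;\mathbb{F})$ is a finitely generated $\mathbb{F}[s]$-module with $A[s^{-1}]\cong H_*(\widetilde M;\mathbb{F})$.

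\begin{itemize}
\item Finite-dimensionality of $H_*(\widetilde M;\mathbb{F})$ forces $A$ to be torsion, hence finite-dimensional. Therefore $A=A_{\mathrm{nil}}\oplus A'$, with $s$ nilpotent on $A_{\mathrm{nil}}$ and invertible on $A'$.
\item Consequently $\varprojlim(\cdots\xrightarrow{s}A\xrightarrow{s}A)=A'\cong H_*(\widetilde M)$ via the natural map, and the system is Mittag-Leffler. The same holds for the other end.
\item Take the limit of the long exact sequences of $(\widetilde M,\widetilde M\setminus W_N)$. All terms are finite-dimensional, so there is no $\varprojlim^1$ term.
\item The limit sequence shows that $H_k(\widetilde M)\to H^{\mathrm{lf}}_k(\widetilde M)$ is zero and that $H^{\mathrm{lf}}_k(\widetilde M)\cong\ker\bigl(H_{k-1}(\widetilde M)^{\oplus2}\xrightarrow{\text{sum}}H_{k-1}(\widetilde M)\bigr)\cong H_{k-1}(\widetilde M)$.
\end{itemize}
This is your end-isomorphism, and it is where the hypothesis is genuinely used.

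\emph{The cup-product compatibility is then routine.}
\begin{itemize}
\item The end map is ``cut a locally finite cycle along $V_0$ and take the boundary.'' It therefore sends $[\widetilde M]^{\mathrm{lf}}$ to $\mu=[V_0]\in H_{n-1}(\widetilde M)$.
\item Naturality of $\partial$ with respect to cap products shows that the composite $H^i(\widetilde M)\cong H^{\mathrm{lf}}_{n-i}(\widetilde M)\cong H_{n-i-1}(\widetilde M)$ is $x\mapsto\pm\,x\cap\mu$.
\item Since $\langle y,x\cap\mu\rangle=\pm\langle x\cup y,\mu\rangle$, the pairing $(x,y)\mapsto\langle x\cup y,[V_0]\rangle$ is nondegenerate.
\item The case $i=n-1$ gives $H^{n-1}(\widetilde M)\cong H_0(\widetilde M)\cong\mathbb{F}$.
\end{itemize}

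\emph{Relative case.} Rather than carrying $\partial\widetilde M$ through a relative Wang sequence, use the form $H^{n-1-i}(\widetilde M,\partial\widetilde M)\cong H^{\mathrm{lf}}_{i+1}(\widetilde M)$ of Lefschetz duality. Combined with the same end-isomorphism, this needs only finite-dimensionality of $H_*(\widetilde M;\mathbb{F})$. The pairing becomes $\langle x\cup y,[V_0,\partial V_0]\rangle$.
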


\subsection{Band diagrams for ribbon surfaces}

We gave a definition of a strong concordance of links in the introduction; in this section we review the notion of a \textit{band diagram} presentation of a ribbon cobordism of links.

\begin{definition}\label{defn:bandDiagram} 
A \textit{band diagram for a ribbon surface} in $S^3 \times I$ is a triple 
$\mathcal{D} = (D, \mathbf{b}, c)$
such that 
\begin{itemize}
\item ${ D }$ is an oriented, planar link diagram for the split union of a link $L$ with the $k$-component unlink $U^k$ 
\item ${\mathbf{b} = \left\{b_{i} : I \times I \looparrowright \mathbb{R}^2 \right\} }$ is a collection of immersed rectangles with crossing decorations. Letting $A \triangle B$ denote the symmetric difference of two sets, the immersions $b_i$ satisfy properties we may summarize as: 
$$\overline{D \; \triangle \;\bigcup_{b_i \in \mathbf{b}} \mathrm{Im}(b_i|_{\partial})} \quad \text{ is an oriented link diagram.\footnote{$\triangle$ denotes the symmetric difference operation $A \triangle B := A \cup B \setminus (A \cap B)$}} $$
More explicitly, each immersion $b_i$ satisfies the following:  
\begin{itemize}
\item $b_i$ is transverse to $D$ and to itself. Transverse intersections of $b((0,1)\times I)$ with itself and with $D$ are decorated with "over" and "under" crossing data, see Figure \ref{fig:bandDiagramExamples}.
\item The images $b(I \times \{0\})$ and $b(I \times \{1\})$ (the ``attaching regions" of the 1-handle) are disjoint subarcs of $D$ whose orientation disagrees with that of $D$.
\end{itemize}

\item $c : \pi_0(D) \rightarrow \{\text{black},\text{gray}\}$ is a coloring of the components of $D$ such that 
$c^{-1}(\text{black})$ is a diagram for $L$. Similarly, $c^{-1}(\text{gray})$ is a diagram for $U^k$.
\end{itemize}
\end{definition}

It is a consequence of standard Morse theory that any band diagram encodes a properly-embedded ribbon surface in $S^3 \times I$, it is somewhat technical to see that the isotopy type of any properly-embedded ribbon surface in $S^3 \times I$ can be presented as a band diagram. We refer the interested reader to \cite{KSS82, CS93, HKM20} for an account of the ideas involved in the proof.

\begin{definition}\label{defn:MorseGraph}
The \textit{Morse graph} $\Gamma_{\mathcal{D}}$ of a band diagram $\mathcal{D} = (D,\mathbf{b},c)$ is an abstract graph defined as follows. 
The vertex set of $\Gamma_{\mathcal{D}}$ is identified with the components $C_i$ of $D$.
Two vertices $C$ and $C'$ are joined with an edge if there is a $b_i \in \mathbf{b}$ whose attaching regions lie in $C$ and $C'$ respectively.
\end{definition}

\begin{figure}[!h]
    \centering
    \begin{subfigure}[t]{0.40\linewidth}
        \centering
        \includegraphics[width=\linewidth]{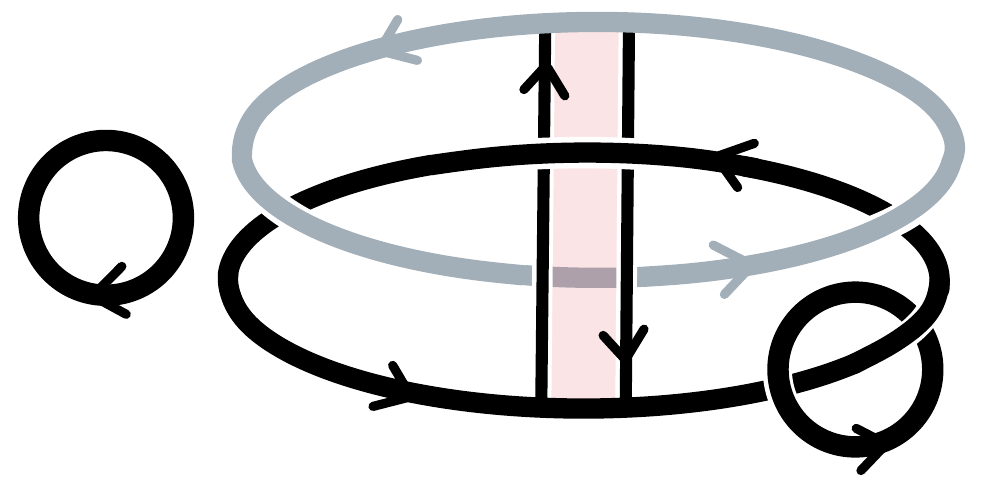}
        \caption{A band diagram representing a strong ribbon concordance between two 3-component links. Its Morse graph is $\quad \bullet\!-\!\bullet\quad\bullet\quad\bullet$.}
        \label{fig:bandDiagramExample}
    \end{subfigure}
    \hfill
    \begin{subfigure}[t]{0.48\linewidth}
        \centering
        \includegraphics[width=\linewidth]{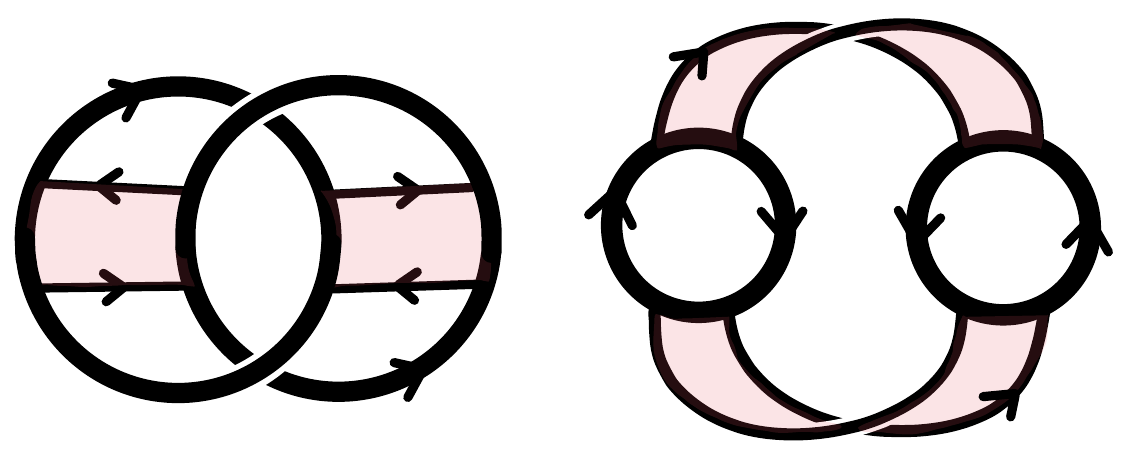}
        \caption{Two band diagrams representing properly embedded, twice-punctured annuli in $S^3 \times I$.
        The left diagram starts at the Hopf link and ends at the 2-component unlink, the right diagram goes the opposite direction. Both have Morse graph consisting of two vertices and two edges going between them.}
        \label{fig:weakConcordanceBandDiagram}
    \end{subfigure}

    \caption{}
    \label{fig:bandDiagramExamples}
\end{figure}

The following is a straightforward consequence of the definitions.
\begin{proposition}
A band diagram $\mathcal{D} = (D, \textbf{b}, c )$ represents a strong ribbon concordance if and only if ${\# \textbf{b} = \# c^{-1}(\text{gray})}$ and its associated Morse graph $\Gamma_{\mathcal{D}}$ is a forest.
\end{proposition}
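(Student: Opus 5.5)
We need to prove: a band diagram represents a strong ribbon concordance iff the number of bands equals the number of gray (minima) components and the Morse graph is a forest.

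The plan is to compute the topology of the ribbon surface $\Sigma$ encoded by $\mathcal{D}=(D,\mathbf{b},c)$ directly from its handle decomposition. Following the standard Morse-theoretic reading of a band diagram, $\Sigma$ is built as follows:
\begin{itemize}
\item start with the product cobordism $L\times[0,\epsilon]$ on the black components;
\item add one 0-handle (a disk) for each gray component, so there are $k=\#c^{-1}(\text{gray})$ of them;
\item attach one 1-handle (band) for each $b_i\in\mathbf{b}$;
\item finish with a product collar on the resulting link.
\end{itemize}
The key observation is that the Morse graph $\Gamma_{\mathcal{D}}$ is a deformation-retract model for $\Sigma$. Collapse each product annulus $L_j\times I$ to a vertex and each minimum disk to a vertex. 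Then $\Sigma$ deformation retracts onto a graph homotopy equivalent to $\Gamma_{\mathcal{D}}$, with the bands becoming edges. In particular the components of $\Sigma$ correspond to components of $\Gamma_{\mathcal{D}}$, and $\chi(\Sigma)=\chi(\Gamma_{\mathcal{D}})=\#\text{vertices}-\#\mathbf{b}$. The number of vertices is $n+k$, where $n$ is the number of components of $L$.

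For the forward direction, suppose $\Sigma$ is a strong concordance, i.e.\ a disjoint union of $n$ annuli each meeting both ends. Then $\chi(\Sigma)=0$, so $n+k-\#\mathbf{b}=0$. Each annulus has $\pi_1\cong\mathbb{Z}$ carried by its boundary. The retraction onto $\Gamma_{\mathcal{D}}$ sends the black vertex $L_j$ to a boundary circle, which the graph model collapses. So each component of $\Gamma_{\mathcal{D}}$, being homotopy equivalent to an annulus once the product circles are collapsed, must be contractible, i.e.\ a tree. Equivalently, each component of $\Sigma$ contains exactly one black vertex (it must meet the incoming end in exactly one circle). A component with $v$ vertices and $e$ edges then has Euler characteristic $0=\chi(\text{annulus})=v-e+(\text{correction}\ 0)$ after accounting for the product circle, and one checks the component graph has $v-e=1$. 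Hence $\Gamma_{\mathcal{D}}$ is a forest.

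I expect the main obstacle to be the bookkeeping between the abstract graph $\Gamma_{\mathcal{D}}$ (where a black vertex is a point) and the actual surface (where it is a circle). The cleanest fix is to use the counting formulation throughout:
\begin{itemize}
\item $\Gamma_{\mathcal{D}}$ is a forest iff $\#\mathbf{b}=(n+k)-\#\pi_0(\Gamma_{\mathcal{D}})$;
\item every component of $\Sigma$ meets $L$ (true for a concordance, since minima with bands only would give closed-off pieces meeting the top alone) iff every tree contains a black vertex.
\end{itemize}
Combined with $\#\mathbf{b}=k$, these force exactly $n$ trees, each containing exactly one black vertex.

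For the converse, assume $\#\mathbf{b}=k$ and $\Gamma_{\mathcal{D}}$ is a forest. Then it has $n+k-k=n$ components. I would additionally check that each tree contains exactly one black vertex. That is the step where the hypothesis may implicitly require that each tree meet $L$; this holds because a tree consisting only of gray vertices would produce a disk component with no incoming boundary, and by the count $n$ trees with $n$ black vertices it happens iff some tree has two black vertices. So I would argue via the orientation condition on bands: the attaching arcs disagree with $D$'s orientation. Alternatively I would note the intended reading is the component count. Given one black vertex per tree, each component of $\Sigma$ is the annulus $L_j\times I$ with disks attached along a tree of bands, which is again an annulus running from $L_j$ to one component of the output. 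Therefore $\Sigma$ is $n$ annuli with only 0- and 1-handles, i.e.\ a strong ribbon concordance.
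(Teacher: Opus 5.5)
\paragraph{The ``if'' direction.} As the statement is written, this direction cannot be proved, and your proposal does not close it. From $\#\mathbf{b}=k$ and the forest condition you correctly get that $\Gamma_{\mathcal{D}}$ has exactly $n$ trees. Nothing, however, forces each tree to contain exactly one black vertex, and the orientation condition on attaching arcs does not bear on this. For example, let $L=L_1\sqcup L_2$, take one gray unknot $U$, and a single band joining $L_1$ to $L_2$ coherently. Then $\#\mathbf{b}=1=\#c^{-1}(\text{gray})$ and $\Gamma_{\mathcal{D}}$ is one edge plus an isolated gray vertex, which is a forest. Yet the surface is a pair of pants together with a disk, not a strong concordance, even though both ends have two components. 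Your remark that an all-gray tree ``would produce a disk component with no incoming boundary'' only restates that this configuration is not a concordance; it does not rule it out. Your closing sentences do prove the converse under an extra hypothesis: every component of $\Gamma_{\mathcal{D}}$ contains a black vertex (equivalently, given the count, exactly one). Under that hypothesis, repeatedly absorbing a gray leaf disk attached by a single band shows each component is homeomorphic to $L_j\times I$. The paper gives no argument beyond calling the proposition a straightforward consequence of the definitions, and its statement omits this same hypothesis.

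\paragraph{The Euler characteristic count.} Your bookkeeping here is wrong in a way that contradicts the forward conclusion. A black vertex stands for $L_j\times I$, which retracts onto a circle, not a point. So $\Sigma$ is homotopy equivalent to $\Gamma_{\mathcal{D}}$ with a loop attached at each black vertex, and $\chi(\Sigma)=k-\#\mathbf{b}$, not $n+k-\#\mathbf{b}$. With your formula, $\chi(\Sigma)=0$ would give $\#\mathbf{b}=n+k$. Your later ``combined with $\#\mathbf{b}=k$'' then assumes exactly what the forward direction is supposed to prove. The clean forward argument is componentwise. Each component of a strong concordance is an annulus meeting the bottom level in one circle, so its subgraph $\Gamma_c$ has one black vertex, $g_c$ gray vertices and $e_c$ edges, with $0=\chi=g_c-e_c$. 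Hence $\Gamma_c$ is connected with $e_c=(1+g_c)-1$ edges, i.e.\ a tree (in particular with no loops or multiple edges). Summing $e_c=g_c$ over components gives $\#\mathbf{b}=k$.
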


\subsection{Heegaard Floer homology for knots and links}
The ``hat" knot Floer homology of a knot $K$ with coefficients in $R$ is a $\mathbb{Z}^2$-graded $R$-module 
$$\widehat{\mathrm{HFK}}(K;R) = \bigoplus\limits_{(m,a) \in \mathbb{Z}^2} \widehat{\mathrm{HFK}}_m(K,a; R).$$ 
The subscript $m$ ranges over values from the \textit{Maslov grading} $M$ and $a$ ranges over values from the \textit{Alexander grading} $A$. The graded Euler characteristic of $\widehat{\mathrm{HFK}}(K;\mathbb{Q})$ is the symmetric normalization of the Alexander polynomial: 
\begin{equation}\label{eqn:AlexanderPolyEulerChar}
\widetilde{\Delta}_K(t) = \sum\limits_{a\in \mathbb{Z}
} 
\left(\sum\limits_{m \in \mathbb{Z} 
} (-1)^m \mathrm{rank}\left(\widehat{\mathrm{HFK}}_m(K,a;\mathbb{Q})\right) \right)\cdot t^a 
\end{equation}

Let ${ L = \sqcup_{i=1}^{n} L_{i} }$ be an ${ n }$-component link in ${ S^{3} }$.  ${ L }$ can be unambiguously associated with a knot \newline ${ \kappa(L) \subset \#^{n-1}(S^{1} \times S^{2}) }$ through a process called \textit{knottification}, see \cite{OS04b}. 
The ``hat" 
knot Floer homology of ${ L }$, denoted ${ \widehat{\mathrm{HFK}}(L) }$, is knot Floer homology of its knottification inside of ${ \#^{n-1}(S^{1}\times S^{2}) }$:
$$ \widehat{\mathrm{HFK}}(L) := \widehat{\mathrm{HFK}}(\#^{n-1} (S^{1} \times S^{2}), \kappa(L)    ) $$

\begin{proposition}[\cite{Ni06}, Propositions 2.1-2]\label{prop:fiberGenusDetection}
Let ${ L }$ be an ${ n }$-component link in ${ S^{3} }$, then
\begin{enumerate}
\item The maximal supported Alexander grading of ${ \widehat{\mathrm{HFK}}(L) }$ is ${ \frac{n-\chi_{3}(L)}{2} }$
\item Moreover, ${ L }$ is fibered if and only if ${ \mathrm{rank} \left( \widehat{\mathrm{HFK}}\left( L, \frac{n-\chi_{3}(L)}{2} \right) \right) = 1 }$
\end{enumerate}
\end{proposition}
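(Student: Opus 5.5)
The plan is to reduce both parts of Proposition \ref{prop:fiberGenusDetection} to known genus and fiberedness detection results for null-homologous knots, via sutured Floer homology. The knottification $\kappa(L) \subset \#^{n-1}(S^1\times S^2)$ is obtained from $L$ by adding $n-1$ \emph{bands}: for each $i=1,\dots,n-1$ one attaches a $1$-handle to $S^3$ away from $L$ and joins $L_i$ to $L_{i+1}$ by a band running once over that handle. Let $S_i$ denote the belt sphere of the $i^{th}$ handle. Each $S_i$ meets $\kappa(L)$ in exactly two points, namely the two sides of the band, and this is what makes the two theories comparable.

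\textbf{Step 1 (surfaces correspond).} Let $F$ be a Seifert surface for $L$ with $\chi(F)=\chi_3(L)$. Adding the $n-1$ bands gives a Seifert surface $F'$ for $\kappa(L)$ with $\chi(F')=\chi(F)-(n-1)$. Hence
\[
g(F') = \tfrac{1-\chi(F')}{2} = \tfrac{n-\chi_3(L)}{2}.
\]
For the reverse inequality, take any Seifert surface $G$ for $\kappa(L)$ and put it transverse to the spheres $S_i$. By innermost disk surgery on closed curves of $G\cap S_i$ (which does not decrease $\chi$), we may assume each $G\cap S_i$ is a single arc joining the two points of $\kappa(L)\cap S_i$. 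Cutting $G$ along these arcs yields a Seifert surface for $L$ in $S^3$ with Euler characteristic $\chi(G)+(n-1)$. Together these show that the minimal genus of $\kappa(L)$ equals $\frac{n-\chi_3(L)}{2}$.

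\textbf{Step 2 (Floer side).} I would use Juhász's identification: for a minimal genus Seifert surface $F'$ of a null-homologous knot $K$,
\[
\widehat{\mathrm{HFK}}(K, g(F')) \cong SH(Y(F')),
\]
the sutured Floer homology of the sutured complement of $F'$. Here $\mathrm{SFH}$ vanishes above that grading, is nonzero when the sutured manifold is taut, and has rank $1$ if and only if it is a product. The cocores of the $n-1$ bands, capped off using the $S_i$, are product disks in $Y(F')$. Decomposing along them is a product disk decomposition, which by Juhász leaves $\mathrm{SFH}$ unchanged, so
\[
SH(Y(F')) \cong SH(Y(F)),
\]
where $Y(F)$ is the sutured complement of $F$ in $S^3$. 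By the same decomposition, $Y(F')$ is taut or a product exactly when $Y(F)$ is.

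\textbf{Step 3 (conclude).} The top grading statement follows once $Y(F)$ is taut, and fiberedness of $L$ is equivalent to $Y(F)$ being a product, hence to rank $1$. I expect the main obstacle to be tautness, in particular irreducibility of $Y(F)$ when $L$ is split. In that case I would first choose $F$ to be a boundary connected sum of minimal surfaces of the split pieces, tubed together through the separating spheres. Then I would check that the split case gives $\mathrm{SFH}$ of rank at least $2$, consistent with $L$ being non-fibered: a reducing sphere contributes a tensor factor $\mathbb{F}^2$, which one sees from the $S^1\times S^2$ summand of the knottification of a split link. The remaining care is in Step 1, making sure the innermost disk surgeries never disconnect the surface in a way that breaks the Euler characteristic count; I would handle this by discarding closed components, since discarding them only increases $\chi$.
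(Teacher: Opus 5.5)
The paper gives no proof of this statement; it quotes Ni. So your proposal has to stand on its own. Its non-split skeleton is sound. $S_i$ minus a neighborhood of the cocore $c_i$ is a product disk in $Y(F')$. Decomposing along these disks recovers $S^3\setminus\nu F$ with sutures along $L$. Juhász's identification of the top group with $\mathrm{SFH}$, together with tautness, then gives the top grading.

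\textbf{Gap 1: which surfaces count.} You never fix whether Seifert surfaces may be disconnected, and your split case makes the wrong choice.
\begin{enumerate}
\item Your reverse inequality in Step 1 cuts $G$ along the arcs $G\cap S_i$, and this can disconnect it. For the $2$-component unlink, $\kappa(U^2)$ bounds the disk $D_1\cup b\cup D_2$ in $S^1\times S^2$, and cutting gives two disks.
\item So Step 1 proves the formula with $\chi_3$ taken over surfaces that may be disconnected but have no closed components. This is the only reading under which (1) holds. With connected surfaces, $\chi_3(U^2)=0$ predicts top grading $1$, whereas $\widehat{\mathrm{HFK}}(U^2)\cong\widehat{\mathrm{HF}}(S^1\times S^2)$ sits in grading $0$.
\item The discrepancy is not confined to split links. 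The trefoil together with its $0$-framed push-off, oriented in parallel, is non-split. Two disjoint genus-one surfaces give $\chi=-2$, and cutting along the JSJ torus shows that no connected Seifert surface does as well.
\item Your Step 3 plan for split links therefore goes the wrong way. If you tube maximal surfaces of the pieces through a splitting sphere $\Sigma$, then $\Sigma$ meets $F$ in the belt circle of the tube, which is essential in $F$. The two halves of $\Sigma$ then compress $R_+$ and $R_-$.
\item Since this $F$ is connected, $S^3\setminus\nu F$ is irreducible. Juhász's vanishing theorem for irreducible, non-taut balanced sutured manifolds then gives $\mathrm{SFH}(S^3(F))=0$, not rank at least $2$.
\end{enumerate}
The fix is to take $F=F_1\sqcup\cdots\sqcup F_k$, a disjoint union of maximal surfaces of the non-split pieces. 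Then $S^3(F)$ is a connected sum of taut sutured manifolds, and $\mathrm{SFH}(S^3(F))\cong\bigotimes_j\mathrm{SFH}(S^3(F_j))\otimes(\mathbb{F}^2)^{\otimes(k-1)}$ is nonzero of rank at least $2$. This gives (1) for split links and shows they never have rank one.

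\textbf{Gap 2: the converse of (2) for non-split $L$.} The same convention leaves a hole here. Irreducibility, and hence tautness, of $S^3(F)$ survives when $F$ is disconnected, because a sphere separating components of $F$ would split $L$. So (1) is fine for non-split links. The problem is the criterion ``rank one if and only if product.'' That is Juhász's theorem for irreducible balanced sutured manifolds with $H_2(M)=0$.
\begin{enumerate}
\item When $F$ has $k$ components, $H_2(S^3\setminus\nu F)\cong\mathbb{Z}^{k-1}$, and $H_2(Y(F'))$ is likewise nonzero.
\item The example above shows you cannot assume a connected maximal surface exists.
\end{enumerate}
One repair:
\begin{enumerate}
\item Prove that $\#^{n-1}(S^1\times S^2)\setminus\kappa(L)$ is irreducible when $L$ is non-split, by an innermost-curve argument against the spheres $S_i$.
\item Apply Ni's fibered-knot detection theorem to $\kappa(L)$; it carries no $H_2$ hypothesis. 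Rank one then makes $Y(F')$ a product.
\item Product disk decompositions of a product are products, so $S^3(F)\cong R\times I$ with $R\cong F$.
\item Since $S^3\setminus\nu F$ is connected, $F$ is connected and $L$ is fibered.
\end{enumerate}

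A minor slip: discarding a sphere component lowers $\chi$ by $2$. What never decreases $\chi$ is a surgery followed by discarding.
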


In \cite{OS08a} a related link invariant called the \textit{link Floer homology} is introduced. 
Given ${ L = \sqcup_{i=1}^{n} L_{i} }$, set
$${ \mathbb{H}(L) := \prod\limits_{i=1}^{n} \left( \mathbb{Z} + \frac{ lk(L_{i}, L \setminus L_{i})}{2} \right). }$$ 
The ``hat"
link Floer homology of $L$ with coefficients in $R$ is a $\mathbb{Z} \oplus \mathbb{H}(L)$-graded $R$-module 
$$\widehat{\mathrm{HFL}}(L;R) = \bigoplus\limits_{(m,\vec{a}) \in \mathbb{Z} \oplus \mathbb{H}(L)} \widehat{\mathrm{HFL}}_m(K,\vec{a}; R)$$the subscript $m$ iterates over values ${ \mathbb{Z} }$ from the \textit{Maslov grading} $M$ and $\vec{a}= (a_1,\cdots, a_n)$ iterates over the lattice ${ \mathbb{H}(L) }$ of \textit{Alexander multigradings} ${ (A_{1},\dots, A_{n}) }$.  
In \cite{OS08a} it is shown that the multivariable Alexander polynomial of an $n$-component link ${ L }$ in ${ S^{3} }$ relates to its link Floer homology via the graded Euler characteristic:

\begin{equation}\label{eqn:LinkFloerEulerChar}
\chi(\widehat{\mathrm{HFL}}(L)):=\sum\limits_{\vec{a} \in \mathbb{H}} \left( \sum\limits_{m \in \mathbb{Z}} (-1)^m \mathrm{rank} \left( \widehat{\mathrm{HFL}}_m(L,\vec{a}) \right) \right) \cdot \left( \prod_{i=1}^n t_i^{a_i} \right) = \prod\limits_{i =1}^n (t_i^{1/2}-t_i^{-1/2}) \cdot \Delta_L(t_1 , \cdots, t_n)
\end{equation}

One can think of ${ \widehat{\mathrm{HFK}}(L) }$ as being obtained from ${ \widehat{\mathrm{HFL}}(L) }$ by collapsing the Alexander multigrading to a single Alexander grading and then shifting the Maslov grading up by ${ \frac{n-1}{2} }$ where ${ n }$ is the number of components of ${ L }$. Note that the relationship above, combined with Proposition \ref{prop:fiberGenusDetection} implies 
\begin{corollary}
${ \widehat{\mathrm{HFL}} }$ detects  ${ \chi_{3} }$ and fiberedness. 
\end{corollary}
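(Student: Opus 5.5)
The plan is to combine the two facts just recorded. By Equation \eqref{eqn:LinkFloerEulerChar} and the remark after it, $\widehat{\mathrm{HFK}}(L)$ is recovered from $\widehat{\mathrm{HFL}}(L)$ as a graded vector space. Proposition \ref{prop:fiberGenusDetection} then reads off $\chi_3(L)$ and fiberedness from $\widehat{\mathrm{HFK}}(L)$.

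Concretely, I take the $\mathbb{Z}\oplus\mathbb{H}(L)$-graded group $\widehat{\mathrm{HFL}}(L)$ and define a single Alexander grading by $A=\sum_{i=1}^n A_i$. I regroup the summands by the value of $A$ and shift the Maslov grading by $\frac{n-1}{2}$. The result is, as a $\mathbb{Z}^2$-graded vector space, $\widehat{\mathrm{HFK}}(L)$, by the collapsing relation stated above (see \cite{OS08a}). So $\widehat{\mathrm{HFL}}(L)$ determines the rank of $\widehat{\mathrm{HFK}}(L)$ in every Alexander grading. The number of components $n$ is also visible from $\widehat{\mathrm{HFL}}(L)$, since it is the rank of the lattice $\mathbb{H}(L)$ indexing the multigrading.

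Next I apply Proposition \ref{prop:fiberGenusDetection}(1). Let $a_{\max}$ be the maximal Alexander grading supported by the collapsed group, that is, the largest value of $A_1+\cdots+A_n$ over the support of $\widehat{\mathrm{HFL}}(L)$. Then $a_{\max}=\frac{n-\chi_3(L)}{2}$, so $\chi_3(L)=n-2a_{\max}$ is determined by $\widehat{\mathrm{HFL}}(L)$. For part (2), $L$ is fibered if and only if $\widehat{\mathrm{HFK}}(L,a_{\max})$ has rank one. This rank equals the total rank of $\bigoplus_{A_1+\cdots+A_n=a_{\max}}\widehat{\mathrm{HFL}}(L,\vec a)$, which is again determined by $\widehat{\mathrm{HFL}}(L)$. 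Hence links with isomorphic $\widehat{\mathrm{HFL}}$ have equal $\chi_3$ and agree on fiberedness.

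The only real obstacle is bookkeeping: checking that the collapse map is an isomorphism of graded groups and not merely an equality of Euler characteristics. Equation \eqref{eqn:LinkFloerEulerChar} alone would not suffice, because cancellation could hide the top grading. So I will cite the chain-level identification from \cite{OS08a}: the Heegaard diagram for $L$ with $2n$ basepoints, after forgetting the distinction among the $z$-basepoints, computes the knot Floer homology of $\kappa(L)$. I will also note that the Maslov shift and the half-integer offsets in $\mathbb{H}(L)$ do not affect which Alexander grading is maximal, nor the rank there.
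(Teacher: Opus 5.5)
Your proposal is correct and follows the paper's own route. You collapse the Alexander multigrading of $\widehat{\mathrm{HFL}}(L)$ to $A_1+\cdots+A_n$, shift the Maslov grading by $\frac{n-1}{2}$ to recover $\widehat{\mathrm{HFK}}(L)$, and then read off $\chi_3(L)$ and fiberedness from the top Alexander grading via Proposition~\ref{prop:fiberGenusDetection}. Your point that the graded Euler characteristic alone would not suffice, so the graded-group identification from \cite{OS08a} is what is needed, usefully sharpens the paper's one-line justification.
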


Zemke \cite{Zem19a} proved that ribbon concordance induces injections on all versions of knot Floer homology; the same strategy yields an analogous result for strong ribbon concordance of links in ${ S^{3} }$.

\begin{theorem}[\cite{Zem19a}]\label{thm:ribbonInjectivity}
Let ${ C : L_{0} \rightarrow L_{1} }$ be a decorated cobordism obtained from a strong ribbon concordance of links in ${ S^{3}\times I }$ such that each component of ${ C }$ is decorated with a single ${ \mathbf{w} }$ arc and a single ${ \mathbf{z} }$ arc. Then ${ \widehat{\mathrm{HFL}} }$ evaluates the decorated concordance ${ C }$ to a grading-preserving injection 
$$ F_{S^{3} \times I, C}  : \widehat{\mathrm{HFL}}(L_{0} ) \rightarrow \widehat{\mathrm{HFL}}(L_{1} )  $$
\end{theorem}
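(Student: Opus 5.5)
The plan is to run Zemke's doubling argument from \cite{Zem19a} componentwise, keeping track of the extra bookkeeping that the Alexander multigrading requires. Let $\overline{C} : L_1 \rightarrow L_0$ denote the concordance obtained by turning $C$ upside down, i.e.\ reflecting $S^3 \times I$ in the $I$-factor. Decorate $\overline{C}$ by reflecting the $\mathbf{w}$ and $\mathbf{z}$ arcs of $C$. The composite $\overline{C}\circ C$ is then a decorated strong concordance from $L_0$ to itself, and each of its $n$ annular components carries one $\mathbf{w}$ arc and one $\mathbf{z}$ arc. By functoriality of the link cobordism maps \cite{Juh16, Zem19b},
$$F_{\overline{C}} \circ F_C = F_{S^3\times I,\, \overline{C}\circ C}.$$
It therefore suffices to show that the right-hand side is the identity on $\widehat{\mathrm{HFL}}(L_0)$. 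This shows $F_C$ is injective.

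\textbf{Step 1: describe the double.} Use a band diagram for $C$ with $k$ gray unknots and $k$ bands whose Morse graph is a forest. The double $\overline{C}\circ C$ is isotopic rel boundary to the following surface. Start with the product $L_0 \times I$ together with $k$ disjoint unknotted $2$-spheres $S_1,\dots,S_k$, where $S_j$ is the double of the $j$-th minimum disk. Then attach $k$ tubes, the $j$-th tube being the double of the $j$-th band. Each tube joins a sphere to either $L_0\times I$ or another sphere. The spheres bound disjoint $3$-balls in $S^4$-like position, but the tubes may wind around $L_0 \times I$ and around the other spheres; this is the only nontrivial feature of the surface. Because the Morse graph is a forest, the tubes can be removed one at a time, starting from a leaf. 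At each stage the sphere being detached is joined by exactly one tube to the rest of the surface.

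\textbf{Step 2: remove tubes inductively.} For a leaf sphere $S$ tubed to a component $\Sigma_i$, apply Zemke's neck-cutting relation to the tube. Equivalently, use the bypass/stabilization relation: the map for a surface with a tube is the sum of the two maps obtained by compressing the tube and placing an extra $\mathbf{w}$ or $\mathbf{z}$ region on either side. In the hat theory, one of the two terms places the sphere $S$ in a single-colored region. An unknotted sphere bounding a ball, decorated with only one type of basepoint region, evaluates to the identity factor. The other term produces a sphere with a $\mathbf{w}$ arc type decoration that contributes zero, for instance because it introduces a factor of $U$ or $V$ that vanishes in the hat flavor.

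After compressing, the sphere $S$ is disjoint from the rest of the surface. It still bounds a $3$-ball, since the linking with $L_0\times I$ was only through the tube. The sphere is then discarded at cost $1$. Repeating this over all $k$ tubes leaves the product decorated cobordism $L_0\times I$ with one $\mathbf{w}$ arc and one $\mathbf{z}$ arc per component, which induces the identity map.

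\textbf{Step 3: gradings.} The grading change formulas of \cite{Zem19b} involve $\chi$ of the $\mathbf{w}$ and $\mathbf{z}$ subsurfaces and the homology classes of the components relative to the boundary. On each annular component of $C$, cut by one $\mathbf{w}$ arc and one $\mathbf{z}$ arc, both subsurfaces are disks. Since $H_2(S^3\times I)=0$, both the Maslov shift and every Alexander multigrading shift vanish componentwise. Hence $F_C$ preserves the grading by $\mathbb{Z}\oplus\mathbb{H}(L)$. This is also consistent because the linking numbers of $L_0$ and $L_1$ agree.

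\textbf{Main obstacle.} The step I expect to be hardest is Step 2 in the multi-component setting. One must ensure that, after the tubes are rechosen to avoid the decorations, the neck-cutting relation can be applied with the relevant decorations on the correct components, and that the tubes which wind around other components of $L_0\times I$ do not obstruct the compression. The first thing I would try is to argue as Zemke does that the homotopy type of a tube is irrelevant once the sphere is compressed off. The reason is that after compression the sphere bounds a ball disjoint from everything, because the original minimum disks were disjoint and unknotted, so the winding of the tube does not matter. The forest condition on the Morse graph is what makes the induction possible, so I will invoke it explicitly.
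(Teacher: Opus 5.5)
Your reduction matches the route the paper intends, but Step 2 does not work as written. The paper gives no argument of its own beyond citing Zemke and asserting that the doubling strategy carries over to links. The parts of your outline that are fine:
\begin{itemize}
\item reducing to $F_{\overline C\circ C}=\mathrm{id}$ by functoriality;
\item the tubed-sphere description of $\overline C\circ C$;
\item the grading count in Step 3. One $\mathbf w$-strip and one $\mathbf z$-strip per annulus gives $\tilde\chi(\Sigma_{\mathbf w})=\tilde\chi(\Sigma_{\mathbf z})=0$ on each component.
\end{itemize}
Step 2 carries all the content. In $S^3\times I$, Zemke's grading formula says a decorated surface shifts $(\mathrm{gr}_{\mathbf w},\mathrm{gr}_{\mathbf z})$ by $(\tilde\chi(\Sigma_{\mathbf w}),\tilde\chi(\Sigma_{\mathbf z}))$. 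A closed sphere component has no incoming or outgoing basepoints. So, however it is decorated, $\tilde\chi(S_{\mathbf w})+\tilde\chi(S_{\mathbf z})=\chi(S^2)=2$.

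Consequently, any term in which a sphere has been split off is shifted away from $(0,0)$ and cannot be the identity. A sphere in a single coloured region would shift by $(2,0)$ or $(0,2)$. A split sphere with one dividing circle is a birth followed by a death of a doubly based unknot, which induces the zero map. So no sphere is ever ``discarded at cost $1$''.

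For the same reason, the two-term relation you state is not homogeneous. Compressing a monochromatic tube raises $\tilde\chi$ of its region by $2$. Adding a small disk of the other colour shifts by $(1,-1)$ or $(-1,1)$. Both terms are therefore off by $(1,1)$ from the tubed surface, so the relation is false as written.

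What you need is a relation in which the identity comes from the tube itself, not from evaluating a detached sphere. It is convenient to give each $S_i$ one dividing circle, joined across its tube to the arcs of $L_0\times I$. Then the whole surface is decorated-isotopic to the product, and you must show that tubing in such a sphere leaves the map unchanged however the tube winds. One way is to write the tube as a band followed by its dual band and compute birth, band, dual band, death directly in Zemke's TQFT.

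This relation must also cope with a point your leaf-by-leaf induction ignores. The ball $B_i$ is in general pierced by the tubes of other spheres, so compressing $S_i$'s own tube does not make $S_i$ split at that stage. That input is what Zemke's argument supplies for knots and what the paper asserts carries over. Your Step 2 replaces it with an incorrect relation, so injectivity is not established.
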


\subsection{A survey of ribbon minimality results for knots}\label{sec:surveyRibbonMinimality}

We present a survey of the literature regarding ribbon minimality for knots, beginning with the minimality criterion from \cite{Gor81}.
Recall that the \textit{lower central series} of a group $G$ is a sequence of groups $\{\gamma_i\}_{i=0}^{\infty}$ defined recursively by
$$\gamma_0(G) = G ,\qquad  \gamma_n(G) = [\gamma_{n-1}(G), G]$$
One might observe in the above that ${ \gamma_{n}(G) \leq \gamma_{n-1}(G) }$ for any ${ n \in \mathbb{N}\cup \left\{0\right\} }$, and indeed one has ${ \gamma_{n}(G) = \bigcap\limits_{i < n} \gamma_{i}(G) }$. This final observation leads to the construction of the \textit{transfinite lower central series} by defining for an arbitrary limiting ordinal ${ \lambda }$
$${ \gamma_{\lambda}(G) = \bigcap\limits_{\alpha < \lambda} \gamma_{\alpha}(G). }$$
Given ${ \alpha }$ is some ordinal, let ${ \alpha+1 }$ denote its successor ordinal and similarly define 
$$ \gamma_{\alpha+1}(G) = [\gamma_{\alpha}(G), G ].  $$ 
\begin{definition}\label{def:TransNilp}
We say that a ${ G }$ is \textit{transfinitely nilpotent} if ${ \gamma_{\alpha}(G) = \left\{1\right\} }$ for some ordinal ${ \alpha }$.
Furthermore, we say a knot $K$ in ${ S^{3} }$ is \textit{transfinitely nilpotent} if, letting ${ G = \pi_{1}(S^{3}\setminus \nu K) }$, the commutator group  ${ [G,G] }$ is transfinitely nilpotent.\footnote{Any knot group ${ G }$ is transfinitely nilpotent; this follows from the fact that every knot group abelianizes to ${ \mathbb{Z} }$ and a simple group-theoretic lemma (Lemma \ref{lem:stabilizationIfZabelianization}). Critically, ${ G }$ being transfinitely nilpotent is independent from ${ [G,G] }$ being transfinitely nilpotent.} 
\end{definition}

\begin{remark}
The notion of \textit{residual nilpotence} is occasionally mentioned in the literature; it is stronger than the notion of transfinite nilpotence. In the language of the transfinite lower central series, ${ G }$ is residually nilpotent if ${ \gamma_{\aleph_{0}}(G) = \left\{1\right\} }$ where ${ \aleph_{0} }$ is the ordinal of countable infinity.
\end{remark}

Gordon \cite{Gor81} points out that 2-bridge knots and fibered knots are transfinitely nilpotent. Moreover, the property of transfinite nilpotence is preserved under taking cables and connected sums of transfinite nilpotent knots.

\begin{definition}\label{def:Qaniso}
Given a knot $K$ in $S^3$, let $\widetilde{Y}$ denote the infinite cyclic cover of $Y := S^3 \setminus \nu K$. Let $t \in \mathrm{Aut}(H^1(\widetilde{Y};\mathbb{Q}))$ denote the automorphism induced by a generator of the group of deck transformations. One says that $K$ is $\mathbb{Q}$\textit{-anisotropic} if $H^1(\widetilde{Y};\mathbb{Q})$ contains no non-trivial $t$-invariant subspace which self-annihilates under the non-singular skew symmetric duality pairing 
$H^1(\widetilde{Y};\mathbb{Q}) \times H^1(\widetilde{Y};\mathbb{Q}) \rightarrow \mathbb{Q}$.
\end{definition}

\begin{remark}\label{rem:QanisoExamplesAndNonExamples}
Note that if $\Delta_K$ is irreducible, then it follows that $H^1(\widetilde{Y}; \mathbb{Q}) \cong Hom(\mathbb{Q}[t^{\pm 1}] / \langle \Delta_K(t) \rangle ; \mathbb{Q})$ has no proper, non-trivial submodules, so any $t$-invariant subspace is all of $H^1(\widetilde{Y};\mathbb{Q})$ and $K$ will be $\mathbb{Q}$-anisotropic by definition.
On the other hand, if $\Delta_{K}$ contains a linear factor of multiplicity one, then ${ H^{1}(\widetilde{Y};\mathbb{Q}) }$ will contain a 1-dimensional ${ t }$-invariant subspace; it is immediate from the definition that skew-symmetric bilinear forms vanish on 1-dimensional subspaces and therefore $H^{1}(\widetilde{Y};\mathbb{Q})$ is ${ \mathbb{Q} }$-isotropic.
\end{remark}

Gordon (Theorem 1.3,  \cite{Gor81}) gave the first ribbon minimality certification result, proving that if $K$ is transfinitely nilpotent and $\mathbb{Q}$-anisotropic, then $K$ is ribbon minimal. 
Until recently, all known ribbon minima either satisfied Gordon's conditions or were conjectured to satisfy them. Gordon used his criteria to show that torus knots are minimal; Boninger and Greene (\cite{BG24}, Proposition 1.7) generalizes this result to knots which are positive \& fibered and conjectured that all positive knots are $\mathbb{Q}$-anisotropic. Their proof works for the more general case of strongly quasipositive fibered knots. They also prove minimality for \textit{special alternating} knots that are fibered and have leading coefficient of $\Delta_K$ which is a prime power; this result is obtained by showing special alternating knots are $\mathbb{Q}$-anisotropic.

Baker \cite{Ba16} proved that fibered knots supporting the tight contact structure for ${ S^{3} }$ are ribbon minimal and, in particular, ${ L }$-space knots are ribbon minimal. Recently, Hom and Park noticed (\cite{HP25}, Remark 3.2) that tight, fibered knots and certain iterated cables thereof are subsumed by a more general class of ribbon minima: knots which are fibered and ``${ \gamma_{0} }$-sharp".\footnote{There is a smooth concordance invariant ${ \gamma_{0} }$ which is derived from the immersed curves reformulation of bordered Heegaard Floer homology. The ${ \gamma_{0} }$-sharp knots are those knots whose Seifert genus is detected by ${ \gamma_{0} }$.}

In later work, Boninger \cite{Bon25} shows minimality of positive knots whose leading coefficient of $\Delta_K$ is a prime power. For the proof, he leverages a result of Mayland-Murasugi to conclude that $K$ satisfying the hypothesis are transfinitely nilpotent.
He then uses results on the knot Floer homology of pseudoalternating knots and $\widehat{\mathrm{HFK}}$-injectivity to show that $J\leq K$ implies they have the same degree in their Alexander polynomial, so Lemma 3.4.ii of \cite{Gor81} implies that $J = K$.

Finally, all 2-bridge knots are residually nilpotent. Leveraging Lisca's classification of which lens spaces are linearly independent in the ${ \mathbb{Q} }$-homology cobordism group,  Tagami \cite{Tag23} showed that a 2-bridge knot is ribbon concordance minimal if and only if it is a torus knot or is not smoothly concordant to a torus knot.
In subsequent work, Abe and Tagami  \cite{AT24} completely characterize when 2-bridge knots are ribbon concordant to torus knots, explicitly determining the minimality status for all such knots.

To the best of our knowledge, 
Tagami's result yielded the first confirmed examples of ribbon concordance minima that are not $\mathbb{Q}$-anisotropic. 
\begin{example}[\cite{Tag23}]\label{exmp:TagamiQisotropic}
The 2-bridge knot 
$K=13a_{880}$ has Alexander polynomial 
$${\Delta_K = (t-2)(2t-1)(3t^2-7t + 3).}$$ Letting ${ Y := S^{3}\setminus \nu K }$ and ${ \widetilde{Y} }$ be its infinite cyclic cover, since each of the factors of $\Delta_K$ are pairwise coprime, we conclude that 
$$H_{1}(\widetilde{Y}; \mathbb{Q}) \cong \mathbb{Q}_{(2)} \oplus \mathbb{Q}_{(1/2)} \oplus \mathbb{Q}[t^{\pm 1}] / (3t^2 -7t + 3). $$

The two copies of $\mathbb{Q}$ above are $t$-invariant with the subscript indicating the Eigenvalue of the action of $t$, the contravariant ${ t }$-action likewise fixes two 1-dimensional subspaces of ${ H^{1}(\widetilde{Y};\mathbb{Q}) }$ and we conclude that $K$ is $\mathbb{Q}$-isotropic. Since $g_4(K) = 1$, Tagami's theorem implies that if ${ K }$ were not minimal then it would be ribbon concordant to a trefoil as these are the only torus knots of slice genus 1. Since $\Delta_{T_{2,3}}$ does not divide $\Delta_K$, Gilmer's theorem \cite{Gil84} implies $K$ cannot be ribbon concordant to $T_{2,3}$.
\end{example}

\section{The ribbon partial order for links}

\subsection{Strong ribbon concordance and split links}
\begin{definition}
We say that a strong concordance $C$ in $S^3 \times I$ is \textit{split} if there is some smooth, proper embedding $S : S^2 \times I \hookrightarrow S^3 \times I$ such that for all $t \in [0,1]$, $S_t = S(-,t)$ separates at least two components of $C \cap (S^3\times\{t\} )$.
\end{definition}

\begin{proposition}\label{prop:splitGeneric}
Suppose there is a strong ribbon concordance $C: L_0 \rightarrow L_1$ and $L_1$ is a split link. Then there is a split, strong ribbon concordance $C': L_0 \rightarrow L_1$. 
\begin{proof}
Fix a Morse function $f: S^{3}\times [-1,2] \rightarrow [-1,2]$ whose restriction to ${ C }$ is self-indexing. Since $C$ is ribbon, its critical values are ${ 0 }$ and ${ 1 }$ only. 
We may assume that ${ C }$ is presented by a banded unlink diagram and, in particular, we may fix some small  ${ \epsilon > 0 }$ so that for ${ A \in \left\{[-1, -\epsilon], \; [\epsilon, 1-\epsilon], [1+\epsilon,2]\right\} }$, the subspace $C \cap f^{-1}(A)$ is a product. 
We conceive of a separating sphere for ${ L_{1} }$ as a fixed embedding ${\phi : S^{2} \rightarrow f^{-1}(\left\{2\right\}) }$ which is separating for ${ C \cap f^{-1}(\left\{2\right\}) }$. We obtain a (vertical) product embedding
$$ \begin{gathered}  S  : S^{2}\times [-1,2] \rightarrow S^{3}\times [-1,2] \\ S(p, t) = S_t(p) = (\phi(p),t) \end{gathered} $$
The image of ${ S_{t} }$ splits ${ S^{3} \times \{t\} }$ into two open 3-balls, we write ${ B_{t} }$ for one of the 3-balls and assume this choice is consistent for all ${ t }$ in the sense that ${ \bigcup_{t \in [-1,2]} B_{t} }$ is connected. For brevity, we abuse notation and write ${ B_{t}^{c}  }$ to represent the interior complementary 3-ball ${ f^{-1}(\{t\}) \setminus B_{t} }$.
We prove the proposition by constructing a $C'$ which is witnessed as split by the product embedding $S \times \mathrm{Id}$.

For each of the bands $\{b_i\}_{i=1}^n$ attached at time 1 in the ribbon concordance ${ C }$, one may obtain a dual band $b_i^{\ast} \subset b_{i}$ by thickening the co-core of ${ b_{i} }$. We now think about the cobordism described by attaching these dual bands to the terminal link $L_1$ of $C$.
There are two immediately relevant consequences of ${ C }$ being a strong ribbon concordance---
\begin{enumerate}
\item\label{itm:strongRibbonConsequenceA} The result of simultaneously attaching the bands $\{b_i^{\ast}\}_{i=1}^{n}$ splits off ${ n }$ unknots $U_i$, each of which is split from the rest of the link. This is forced by ${ \chi(C) = 0 }$ and the fact that ${ C }$ has the same number of connected components as the ${ L_{i} }$.
\item\label{itm:strongRibbonConsequenceB}  Each of the ${ b_{i}^{\ast} }$ must have attaching region lying in a single component of ${ L_{1} }$. Otherwise, the fact that ${ C }$ is ribbon will force it to be a ribbon surface between links with different numbers of components, which is forbidden by the definition of a strong concordance. 
\end{enumerate}
Using item (\ref{itm:strongRibbonConsequenceA}) above, we may write $\widehat{L_{0}} = L_0' \sqcup \bigsqcup_{i=1}^n U_i$ to denote the result of attaching the ${ \left\{b_{i}^{\ast}\right\}_{i=1}^{n} }$ to ${ L_{1} }$ where the link ${ L_{0}' }$ is isotopic to ${ L_{0} }$.

Recall that we chose a splitting sphere ${ S_{2} : S^{2} \hookrightarrow f^{-1}(\left\{2\right\}) }$ for ${ L_{1} }$. Using the assumed product structure for ${ C }$ away from neighborhoods of critical level sets, the restriction to ${ t \in [1+\epsilon,2] }$ of ${ S_{t} : S^{2}\times \{t\} \hookrightarrow S^{3}\times \left\{t\right\}}$ 
has image which is disjoint from ${ C \cap f^{-1}([1+\epsilon, 2]) }$. \textit{A priori} ${ \mathrm{Im}(S_{1}) }$ may intersect the bands ${ b_{i}^{\ast} }$, but item (\ref{itm:strongRibbonConsequenceB}) tells us that the attaching region of ${ b_{i}^{\ast} }$ is either contained in ${ B_1 }$ or in ${ B_1^{c} }$ as otherwise ${ S_{1+\epsilon} }$ would not be separating for ${C \cap  f^{-1}(\{1+\epsilon\}) }$.
We bootstrap this observation and prove that the dual bands $\{b_i^{\ast}\}_{i=1}^{n}$ can be chosen so that they lie entirely in the interiors of either $B_{1}$ or $B_{1}^{c}$.

Each of the unknot components ${ U_{i} }$ is split from the rest of ${ \widehat{L_{0}}}$, so there are spanning disks $\{\Delta_i\}_{i=1}^{n}$ which are pairwise disjoint and each $\Delta_i$ is disjoint from $\widehat{L_0} \setminus U_i$.
If the attaching region for $b_i^{\ast}$ lies in $B_{1}$ but the band intersects both $B_{1}$ and $B_{1}^c$, then the spanning disk $\Delta_i$ for $U_i$ implies that $b_i^{\ast} \cap B_{1}^c$ is a union of thickened, boundary parallel arcs. In particular, we may use $\Delta_i$ to isotope $b_{i}^{\ast}$ so that it lies in the interior of $B_{1}$. It is clear that the same holds if the attaching region for $b_i^{\ast}$ lies in $B_1^c$.

\begin{remark}
It is possible that the band $b_i^{\ast}$ contains arcs belonging to two different unknot components and, hence, there are two disks that may be used to slide the band into the interior of the 3-ball containing the attaching regions of $b_i^{\ast}$. The exact choice is irrelevant to our argument. 
\end{remark}

The embeddings ${ S_{1 \pm \epsilon} : S^{2} \times \left\{1 \pm \epsilon\right\} \rightarrow f^{-1}(\{1 \pm \epsilon\}) }$ are such that  ${\mathrm{Im}( S_{1 - \epsilon}) }$ is disjoint from $\widehat{L_0} = L_0' \sqcup \bigsqcup_{i=1}^n U_i \subset f^{-1}(\{1-\epsilon\})$ and ${\mathrm{Im}( S_{1 + \epsilon}) }$ will be a splitting sphere for a link ${ L_{1}' \subset f^{-1}(\{1+\epsilon\})}$ which is isotopic to ${ L_{1} }$.
Finally, note that if a disk $\Delta_i$ intersects $\mathrm{Im}(S_{0})= \mathrm{Im}(S_{1})$, then our work above tells us it must do so in a disjoint union of closed curves, and, performing surgery along innermost disks and removing introduced closed components, we obtain a $\Delta_i'$ such that $\Delta_i' \cap \mathrm{Im}(S_{0}) = \emptyset$ and $\partial \Delta_i' = \partial \Delta_i = U_i$.

Given a component ${ K \subset L_{1} }$, we let ${ b_{K}^{\ast} }$ denote the subset of ${ \left\{b_{i}^{\ast}\right\} }$ whose attaching regions lie on ${ K }$.
$\mathrm{Im}(S_{1+\epsilon}) = \partial B_{1+\epsilon}$ is separating for the link $L_1$ by assumption, meaning there is a component $K \subset L_1 \cap B_{1+\epsilon}$ and a component $J \subset L_1 \cap B_{1+\epsilon}^{c}$. 
Our dual bands $b_i^{\ast}$ were chosen so that there are distinct components ${ K' }$ and ${ J' }$ of ${ L_{0}' }$ such that
\begin{itemize}
\item attaching bands from ${ b_{K} }$ splits $K$ into a union $K' \cup \bigsqcup_{i=1}^{\# b_{K}} U_i \subset \widehat{L_{0}} \cap B_{1-\epsilon}$ 
\item attaching bands from ${ b_{J} }$ splits $J$ into a union $J' \cup \bigsqcup_{i=1}^{\# b_{J}} U_i \subset \widehat{L_{0}} \cap B_{1-\epsilon}^{c}$ 
\end{itemize}
\noindent
Finally, if $U_i \subset B_{1-\epsilon}$, the above tells us it bounds a disk $\Delta_i' \subset B_{0}$ and likewise for ${ U_{i} \subset B_{1-\epsilon}^{c} }$. 

We now construct a ribbon concordance $C' : L_0' \to L_1'$ which is witnessed as split by ${ S_{t} }$. 
Begin with $L_0'$ at ${ t = -1 }$. At ${ t = 0 }$, birth the ${ U_{i} }$ using disks ${ \Delta_{i}' }$ to obtain the link ${ \widehat{L_{0}} }$. 
At time $t = 1$, select a co-core of each $b_i^{\ast}$ and use this as the core of the dual $(b_i^{\ast})^{\ast}$ of the dual band $b_i^{\ast}$. In particular, $(b_i^{\ast})^\ast \subset b_i^{\ast}$ and since the latter is either in the interior of $B_{1}$ or $B_{1}^c$, we conclude $(b_i^{\ast})^{\ast}$ is as well. 
The resulting link $L_1'$ will be isotopic to $L_1$.
\end{proof}
\end{proposition}

The above immediately implies the following.\footnote{Corollary \ref{cor:splitRestriction} can also be obtained from a result of Gujral and Levine. \cite{GL22} It would be interesting to know if a version of Proposition \ref{prop:splitGeneric} also holds for strong, homotopy ribbon concordances of links.}

\begin{corollary}\label{cor:splitRestriction}
If $L_0 \leq L_1$ and $L_1$ is split, then $L_0$ is split.
\end{corollary}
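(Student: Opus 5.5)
The corollary follows directly from Proposition \ref{prop:splitGeneric}. Suppose $C : L_0 \rightarrow L_1$ is a strong ribbon concordance and $L_1$ is split. The proposition then supplies a split strong ribbon concordance $C' : L_0 \rightarrow L_1$. By definition, this means there is a proper embedding $S : S^2 \times I \hookrightarrow S^3 \times I$ such that, for every $t$, the sphere $S_t$ separates at least two components of $C' \cap (S^3 \times \{t\})$.

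The plan is to evaluate this at $t = 0$. The slice $C' \cap (S^3 \times \{0\})$ is exactly $L_0$, so $S_0$ is an embedded $2$-sphere in $S^3 \times \{0\} \cong S^3$. It is disjoint from $L_0$, since $S$ is disjoint from $C'$ in each level. It also has components of $L_0$ on both sides. Such a sphere is precisely a splitting sphere, so $L_0$ is split.

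The only point needing care is that "separates at least two components" really places nonempty sublinks of $L_0$ on both sides of $S_0$, and not just on one. This is guaranteed by the construction in Proposition \ref{prop:splitGeneric}. There $S_t = \phi \times \{t\}$ is a vertical product, and at $t=0$ the distinct components $K' \subset B_0$ and $J' \subset B_0^c$ of $L_0'$ lie on opposite sides. Since $L_0'$ is isotopic to $L_0$, carrying the sphere $\mathrm{Im}(S_0)$ along the ambient isotopy gives a splitting sphere for $L_0$ itself. I expect no real obstacle: all of the work was done in the proposition.
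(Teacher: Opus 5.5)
Your proposal is correct and follows the paper's approach: the paper states that the corollary is immediate from Proposition~\ref{prop:splitGeneric}, and you make that explicit by restricting the separating product sphere of the split concordance $C'$ to the bottom level, where it becomes a splitting sphere for $L_0' \simeq L_0$. The extra check you flag is largely built into the definition, since separating two components already puts one on each side of the sphere. Disjointness of the sphere from the link is the part that rests on the construction in the proposition, where the spheres $S_t$ are disjoint from $C'$ at every level.
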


\subsection{Proof of Theorem \ref{thm:linkRibbonOrder}}

We make heavy use of a classical lemma of Gordon.

\begin{lemma}[Lemma 3.1 \cite{Gor81}]\label{lem:GordonsLemma}
Suppose  $C : L_0 \rightarrow L_1$ is a strong ribbon concordance, then the maps induced by inclusion of $S^3 \setminus \nu L_i$ into $(S^3 \times I) \setminus \nu C$ are such that 

$$\pi_1(S^3 \setminus \nu L_0) \hookrightarrow \pi_1((S^3 \times I) \setminus \nu C) \twoheadleftarrow  \pi_1(S^3 \setminus \nu L_1)$$

\end{lemma}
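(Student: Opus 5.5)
Set $X := (S^3\times I)\setminus \nu C$ and $Y_i := S^3\setminus \nu L_i$, as in Section \ref{sec:infiniteCyclicCovers}. The lemma has two parts: $\pi_1(Y_1)\to\pi_1(X)$ is surjective and $\pi_1(Y_0)\to\pi_1(X)$ is injective. Both come from the handle structure of $X$ that the ribbon condition gives. I will read $C$ from $L_0$ upward, as a banded unlink diagram: $k$ local minima (births of the unknots $U^k$, split from $L_0$ and from each other) followed by $k$ bands.

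\emph{Surjectivity from the $L_1$ end.} Turn $C$ upside down. Relative to $Y_1\times I$, the complement $X$ is then built by attaching handles dual to those of $C$. A band of $C$ (index 1 on the surface) becomes a 2-handle of the complement when seen from the top. A local minimum of $C$, seen from above, is a local maximum of the reversed surface and contributes a 3-handle to the complement. There are no surface maxima, so the complement gets no 1-handles relative to the $L_1$ end. Thus $X \simeq Y_1\times I \cup (\text{2- and 3-handles})$. Adding 2- and 3-handles only imposes relations on $\pi_1$ and adds no generators, so $\pi_1(Y_1)\to\pi_1(X)$ is onto.

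\emph{Injectivity from the $L_0$ end.} From the bottom, each birth of an unknot $U_j$ that is split from everything adds a 1-handle to the complement. The group becomes $\pi_1(Y_0) * F_k$, with the free generators $x_j$ the meridians of the $U_j$. Each band then adds a 2-handle. Its relation equates the meridian of $U_j$ with a conjugate of the meridian of the component the band attaches to. So
\[
\pi_1(X) \cong \bigl(\pi_1(Y_0) * \langle x_1,\dots,x_k\rangle\bigr)\big/\langle\!\langle x_j = w_j m_{j} w_j^{-1}\rangle\!\rangle ,
\]
where $m_j$ is the meridian of a component already present and $w_j$ is a word. The Morse graph is a forest (the Proposition on band diagrams above) and $\#\mathbf{b}=k$. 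Together these let me order the bands so that each new relation involves exactly one new generator $x_j$, which lies in a tree hanging off a component of $L_0$. The resulting presentation is a sequence of one-relator amalgams / HNN-type steps in which each new letter appears exactly once. By Magnus's Freiheitssatz, or more directly by seeing each step as an amalgam of $G$ with $\mathbb{Z}$ along $\mathbb{Z}$ where $x_j\mapsto$ a conjugate of $m_j$, the inclusion of $\pi_1(Y_0)$ stays injective.

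\emph{The main obstacle.} The naive step ``$x_j$ is identified with a conjugate of $m_j$'' would make $x_j$ redundant, and then the map would be an isomorphism. That cannot be right: the word $w_j$ may itself involve other $x_i$, and the band can wind through the disks bounded by other unknots. The real relation therefore reads $x_j = w_j m_j w_j^{-1}$ with $w_j$ a word in all the generators, so tree-ordering alone does not give a clean free-product-with-amalgamation structure. This is exactly where Gordon's original argument works. I would follow his route. First, $X$ deformation retracts onto $Y_0$ union 1- and 2-handles with the same number of each. Second, pass to $\mathbb{Z}$-homology: $H_1(Y_0)\to H_1(X)$ is an isomorphism, since the forest condition makes each 2-handle kill exactly one new meridian class. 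Third, use the surjection from the $L_1$ side together with the fact that the double $X\cup_{Y_1}\overline{X}$ is a complement of a concordance, and $X$ is a retract of it, as in Gordon's Lemma 3.1 proof via the doubled ribbon concordance. The retraction $\pi_1(\text{double})\to\pi_1(X)$ restricted to $\pi_1(Y_0)$ gives a left inverse up to the identification $Y_0\hookrightarrow$ double, and this yields injectivity. I expect the careful verification of this retraction in the multi-component setting to be the hardest step. In particular, one must check that the reflected copy of $\pi_1(Y_0)$ retracts correctly when the bands connect several distinct components.
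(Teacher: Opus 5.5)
\textbf{Surjectivity.} Your argument is correct and is the standard one: relative to the $L_1$ end, the complement has only 2- and 3-handles, so $\pi_1(Y_1)\to\pi_1(X)$ is onto. The paper does not reprove this lemma; it quotes Gordon.

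\textbf{Injectivity: the gap.} The injectivity half does not go through, because neither ingredient you end up relying on can give it. An isomorphism $H_1(Y_0)\to H_1(X)$ says nothing about $\pi_1$. The fold retraction $D=X\cup_{Y_1}\overline{X}\to X$ is the identity on $X\supset Y_0$. The composite $\pi_1(Y_0)\to\pi_1(D)\to\pi_1(X)$ is therefore just $\iota_{0\ast}$ again, so no left inverse landing in $\pi_1(Y_0)$ appears. In fact $\pi_1(Y_1)$ surjects onto both halves, so van Kampen gives $\pi_1(D)\cong\pi_1(X)\ast_{\pi_1(Y_1)}\pi_1(\overline{X})\cong\pi_1(X)$; doubling adds no information at the level of $\pi_1$. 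What your sketch would really need is a homomorphism $\pi_1(X)\to\pi_1(Y_0)$ restricting to the identity on $\pi_1(Y_0)$. Combined with surjectivity, that would make $\pi_1(Y_0)$ a meridian-preserving quotient of $\pi_1(Y_1)$, which is strictly stronger than the lemma, and nothing in the doubling picture supplies it.

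\textbf{The missing idea.} What you need is an equation-solving argument over $G=\pi_1(Y_0)$, and your presentation is exactly the right input for it:
\[
\pi_1(X)\cong\bigl(G\ast F(x_1,\dots,x_k)\bigr)/\langle\!\langle x_i^{-1}w_iy_iw_i^{-1}\rangle\!\rangle .
\]
Here $y_i$ is the meridian of the parent of $U_i$ in the Morse forest (a meridian of $L_0$ or some $x_j$), and the $w_i$ are arbitrary words in $G\ast F$.
\begin{itemize}
\item Order the gray vertices by distance to the roots. Since $w_i$ and $w_i^{-1}$ contribute zero net exponent, the exponent-sum matrix of the relators in the $x_j$ is triangular with diagonal entries $-1$, whatever the $w_i$ are. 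It is therefore unimodular.
\item The Gerstenhaber--Rothaus degree argument then applies to every $\rho\colon G\to U(N)$. The relator map $U(N)^k\to U(N)^k$ has degree $\pm1$, so it hits $(1,\dots,1)$, and $\rho$ extends over $\pi_1(X)$.
\item Link groups are residually finite (Hempel plus geometrization). So every $g\neq 1$ in $G$ survives in a finite quotient, hence is nontrivial under some permutation representation into $U(N)$. Extending that representation over $\pi_1(X)$ shows $\iota_{0\ast}(g)\neq 1$.
\end{itemize}
This representation-extension mechanism is the standard route to Gordon's injectivity. It is also the phenomenon behind the surjection $R_n(X)\twoheadrightarrow R_n(Y_0)$ that the paper invokes in its proof of the partial-order theorem. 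Your tree ordering is useful only for computing this exponent matrix. As you noticed yourself, it cannot support a one-relator-at-a-time Freiheitssatz argument.
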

We are now ready to prove Theorem \ref{thm:linkRibbonOrder}.

\begin{proof}[Proof of Theorem \ref{thm:linkRibbonOrder}]
As with the knot case, it's clear that the relation $\leq$ induced by strong ribbon concordance is reflexive and transitive, so we need only show that it is antisymmetric.
Let $L$ and $L'$ be $n$-component links such that there are ribbon concordances 
$$C : L \rightarrow L' \qquad \text{ and } \qquad C': L' \rightarrow L$$ We will show this forces $L = L'$. In passing from the $n=1$ case to the $n>1$ case, we must handle the emergence of split links and the argument decomposes into two cases.

\vspace{5pt}

\sectionrule
\vspace{5pt}
\noindent
\textbf{Case i:} Neither ${ L }$ nor ${ L' }$ is split
\vspace{5pt}

\sectionrule 
\vspace{5pt}
\noindent
If neither link is split, one may use the same argument as in Agol's proof from the knot case. We recapitulate the argument for the reader's convenience.
The composition $C'\circ C$ is a strong ribbon concordance from $L$ to $L$ by assumption where ${ C }$ is the preimage of ${ [0, 1 / 2] }$ and ${ C' }$ is the preimage of ${ [1 / 2, 1] }$.
We adopt notation as follows, see Figure \ref{fig:schematicX} for a schematic.

$$
\begin{gathered}
\forall i \in \{0,1/2,1\}: Y_i := (S^3\times \{i\}) \setminus \nu(C' \circ C)  \\ 
X_0 := (S^3 \times [0,1/2])\setminus \nu C \qquad X_1 := (S^3 \times [1/2,1])\setminus \nu C'  \qquad X := X_0 \amalg_{Y_{1/2}} X_1 
\end{gathered}
$$

\begin{figure}[!h]
\centering
\includegraphics[width=0.55\linewidth]{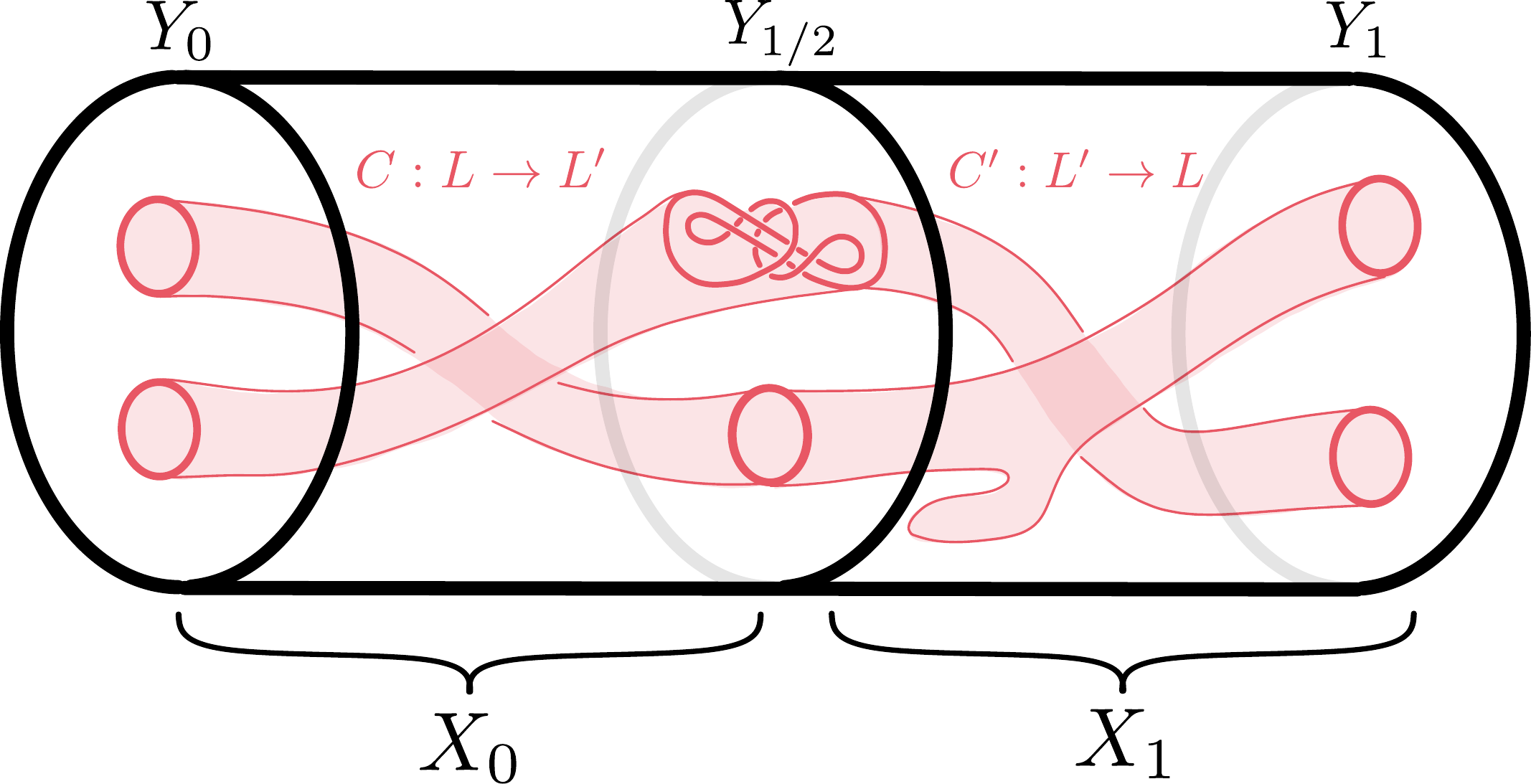}
\caption{ A schematic for the decomposition of $X$ as the union of $X_0$ and $X_1$ along the common subset $Y_{1/2}$ of their boundary strata. 
}\label{fig:schematicX}
\end{figure}

\begin{remark}
In our notation, ${ Y }$'s are 3-manifolds and ${ X}$'s are 4-manifolds; \cite{Gor81} and \cite{Agol22} use the opposite convention. 
Furthermore, we emphasize: $Y_0 \cong Y_1 \cong S^3 \setminus \nu L$ and $Y_{1/2} \cong S^3 \setminus \nu L'$. 
\end{remark}

First one shows that the inclusions $Y_0,Y_1 \hookrightarrow X$ induce a $\pi_1$-isomorphism. 
Lemma \ref{lem:GordonsLemma} tells us that the inclusion ${\iota_{1} : Y_{1} \rightarrow X }$ induces a surjective map ${ (\iota_{1})_{\ast}: \pi_{1}(Y_{1}) \twoheadrightarrow \pi_{1}(X) }$. If ${ (\iota_1)_{\ast} }$ is also injective, then one concludes that the inclusion induces an isomorphism of fundamental groups. Letting ${ R_{n}(A) }$ denote the ${ SO(n) }$-representation variety of $\pi_{1}(A)$, one can obtain the desired injection in three steps:
\begin{enumerate}
\item\label{itm:repVarietyA} 
Given ${ \rho \in R_{n}(X) }$, define ${ f(\rho) = \rho \circ (\iota_{1})_{\ast} }$ and ${ g(\rho) = \rho \circ (\iota_{0})_{\ast} }$.
One may use Lemma \ref{lem:GordonsLemma} and Proposition 2.1 of \cite{DLVW22} to obtain polynomial maps of $SO(n)$-representation varieties
$$ R_n(Y_0) \xtwoheadleftarrow{g}  R_n(X) \xhookrightarrow{\;\;f\;\;}  R_n(Y_1)$$

A careful examination of generators and relations reveals ${ f }$ and ${ g }$ are polynomial maps and, in particular, $f : R_n(X) \hookrightarrow R_n(Y_1)$ is an inclusion of a real algebraic subset.
\item\label{itm:repVarietyB}  Since $Y_0 \cong Y_1$, there is a polynomial isomorphism ${\phi : R_n(Y_0) \rightarrow R_n(Y_1)}$ and so one has a polynomial surjection ${\phi \circ g :R_n(X) \rightarrow R_n(Y_1)}$.
\item\label{itm:repVarietyC}  Since $L$ is not split, its complement is irreducible and, in fact, Haken. Hence, $\pi_1(Y_i)$ is residually finite by \cite{Hem87},  Theorem 1.1.
Given some $h \in \pi_{1}(Y_{1}) \setminus \{1\}$, the above implies that ${ \pi_{1}(Y_{1}) }$ has a finite quotient ${ p : \pi_{1}(Y_{1}) \twoheadrightarrow  H }$ in which ${ p(h)\neq 1 }$.  Cayley's Theorem implies that any finite group embeds into ${ SO(n) }$ provided ${ n \gg 0 }$; hence there is some ${ n }$ for which we have a representation ${ \rho : \pi_{1}(Y_{1}) \rightarrow SO(n)  }$ which factors through ${ p }$ such that ${ \rho(h)\neq 1 }$. 
The conclusions of (\ref{itm:repVarietyA})  and (\ref{itm:repVarietyB})  together with 
\cite{Agol22} Lemma A.2 implies ${ (\iota_{1})_{\ast} }$ induces an isomorphism
$R_n(X) \cong R_n(Y_1)$.
It follows that ${ \rho \in R_{n}(Y_{1}) }$, is the image of some ${ \rho' \in R_{n}(X) }$ under $f$ which satisfies the condition  ${ (\rho' \circ ( \iota_{1})_{\ast})(g) \neq 1 }$. Since this can be arranged for any ${ g \in \pi_{1}(Y_{1}) \setminus \{1\} }$, we conclude ${ \ker((\iota_{1})_{\ast}) }$ is trivial.
\end{enumerate}

By the above, ${ (\iota_{1})_{\ast} : \pi_{1}(Y_{1}) \rightarrow \pi_{1}(X) }$ is an isomorphism. For the next step, consider the sequence of inclusions
\begin{equation}\label{eqn:seqOfInclusions}
Y_1 \xhookrightarrow{\;\;\iota'\;\;} X_1 \xhookrightarrow{\;\;\iota''\;\;} X
\end{equation} 
By Gordon's lemma, $\iota'_{\ast} : \pi_1(Y_1) \rightarrow \pi_1(X_1)$ is surjective and, by Agol's argument sketched above, the composition ${(\iota'' \circ \iota')_{\ast} : \pi_1(Y_1) \rightarrow \pi_1(X)}$ is an isomorphism, hence $\pi_1(Y_1)\cong \pi_1(X_1)$. 
Gordon's lemma also tells us the inclusion
$$S^3 \setminus \nu L' =  Y_{1/2} \hookrightarrow X_1 = (S^3 \times I) \setminus \nu C'$$
induces an injection $(\iota_{1 / 2})_{\ast} : \pi_1(Y_{1/2}) \hookrightarrow \pi_1(X_1)\cong \pi_1(Y_1)$. We now show that this injective map respects the peripheral structure as described in \S \ref{sec:background3manifold}. In doing so, we will actually be able to show using Waldhausen's theorem (Theorem \ref{thm:Waldhausen}) that there is a covering map ${ p : Y_{1 / 2} \rightarrow Y_{1} }$ such that ${ p_{\ast} = (\iota_{1 / 2})_{\ast} }$ and, furthermore, that this map is a meridian-preserving homeomorphism.

Suppose ${ \mathcal{D} }$ is a band diagram (Definition \ref{defn:bandDiagram} ) describing the strong ribbon concordance ${ C' : L' \rightarrow L }$; the band diagram specifies a diagram ${ D' }$ for ${ L' }$. Up to applying Reidemeister moves to ${ D' }$, we can select a disk ${ \Delta }$ in the plane whose intersection with ${ \mathcal{D} }$ satisfies the following conditions:
\begin{enumerate}
\item Every component ${ D_{j}' }$ of the diagram ${ D' }$ meets ${ \Delta }$ in a single arc $a_j$
\item The arcs ${ a_{j} }$ are not nested in ${ \Delta }$
\item ${ \Delta }$ does not intersect the 0-handles or 1-handles as presented in ${ \mathcal{D} }$ (and, in particular, does not meet the attaching regions of the 1-handles of ${ \mathcal{D} }$)
\end{enumerate}

The ribbon concordance ${ C' : L' \rightarrow L }$ specifies a correspondence between the tori which comprise ${ \partial Y_{1/2} }$ and ${ \partial Y_{1} }$. 
We enumerate components of ${ \partial Y_{1 / 2} }$ as ${ F_{j}' }$ and components of ${ \partial Y_{1} }$ as ${ F_{j} }$ where we index so that ${ \nu (C') }$ connects ${ F'_{j} }$ to ${ F_{j} }$ in ${ S^{3} \times I }$. 
Letting ${ p: S^{3}\times I \rightarrow \mathbb{R}^{2} }$ be the projection whose restriction to ${ C' }$ yields the band diagram ${ \mathcal{D} }$, the images of the torus boundary ${ F_{j}' }$ under ${ p }$ inside of ${ \Delta }$ will be closed neighborhoods ${ N_{j} }$ of the ${ a_{j} }$ arcs. 
Since the ${ a_{j} }$ are not nested in the interior of ${ \Delta }$, there is a connected component ${S \subset  \Delta \setminus \cup_{j}\, int(N_{j}) }$ that intersects the boundary of each ${ N_{j} }$ and,
perturbing ${ F'_{j} }$ with an isotopy if needed, we may select a point ${ \ast_{j} \in S \cap \partial N_{j} }$ such that ${ p^{-1}(\ast_{j}) \cap F_{j}' }$ is a single point. 
We will select a basepoint ${ \ast_{\Delta} \in S }$ and a family of disjoint embedded arcs ${ \gamma_{j} \in \mathrm{Path}_{S}(\ast_{j}, \ast_{\Delta}) }$ such that ${ \gamma_{j} \pitchfork \partial N_{j} = \ast_{j} }$.

Fix a basepoint ${ \ast \in \mathbb{T}^{2} }$, let ${ \mu, \lambda }$ be loops based at ${ \ast }$ whose classes generate ${ \pi_{1}(\mathbb{T}^{2},\ast) }$. For each ${ j }$, we have inclusions
$$ 
\begin{gathered}
i_{j}' : (\mathbb{T}^{2} ,\ast) \hookrightarrow \partial Y_{1/2} \qquad \mathrm{Im}(i_{j}' ) = F_{j}'  \\
i_{j} : (\mathbb{T}^{2} , \ast ) \hookrightarrow \partial Y_{1}   \qquad \mathrm{Im}(i_{j} ) = F_{j}   
\end{gathered}
$$
such that, given ${ p : S^{3} \times I \rightarrow \mathbb{R}^{2} }$ is the projection whose restriction to ${ C' }$ yields the band diagram ${ \mathcal{D} }$, we have
$${ (p \circ  i_{j}') (\ast) = (p\circ i_{j})(\ast) = \ast_{j}. }$$
We may assume that ${ C' }$ is a product over the preimage of ${ \Delta }$ under the projection ${ p : S^{3} \times I \rightarrow \mathbb{R}^{2} }$ yielding the band diagram. We obtain various arcs by looking at the preimages of $\ast_{\Delta}$, $\ast_j$, and $\gamma_j$ under the projection ${ p }$, see Figure \ref{fig:projectionPeripheral} for a schematic.
\begin{itemize}
\item ${p^{-1}(\ast_{j})=\eta_{j} }$ is an arc living in the component of ${ \partial(\nu C') \subset X_{1} }$ that connects 
${\ast_{j} \times \left\{1 / 2\right\} \in F'_{j} \subset \partial Y_{1 / 2}  }$ to ${ \ast_{j} \times \left\{1\right\} \in F_{j} \subset \partial Y_{1} }$.
\item  ${p^{-1}( \ast_{\Delta}) = \xi }$ is an arc in ${ X_{1} }$ between the points ${\ast_{\Delta} \times \left\{1/2\right\} \in int(Y_{1 / 2}) }$ and ${ \ast_{\Delta} \times \left\{1\right\} \in int(Y_{1}) }$. 
\item  $p^{-1}(\gamma_{j}) = \Gamma_{j}$ is a disk in ${ X_{1} }$ which is bounded by the loop based at ${ \ast_{\Delta} \times \left\{1 / 2\right\} }$ obtained by concatenating our various paths:
$${ \alpha_{j}  = (\gamma_{j} \times \left\{1 / 2\right\} )^{-1}   \cdot \eta_{j}^{-1} \cdot (\gamma_{j} \times \left\{1\right\}) \cdot \xi \qquad [\alpha_{j}] \in \pi_{1}(X_{1}, \ast_{\Delta}  \times \{ 1/ 2\}) }$$
\end{itemize}

\begin{figure}[!h]
\includegraphics[width=0.4\linewidth]{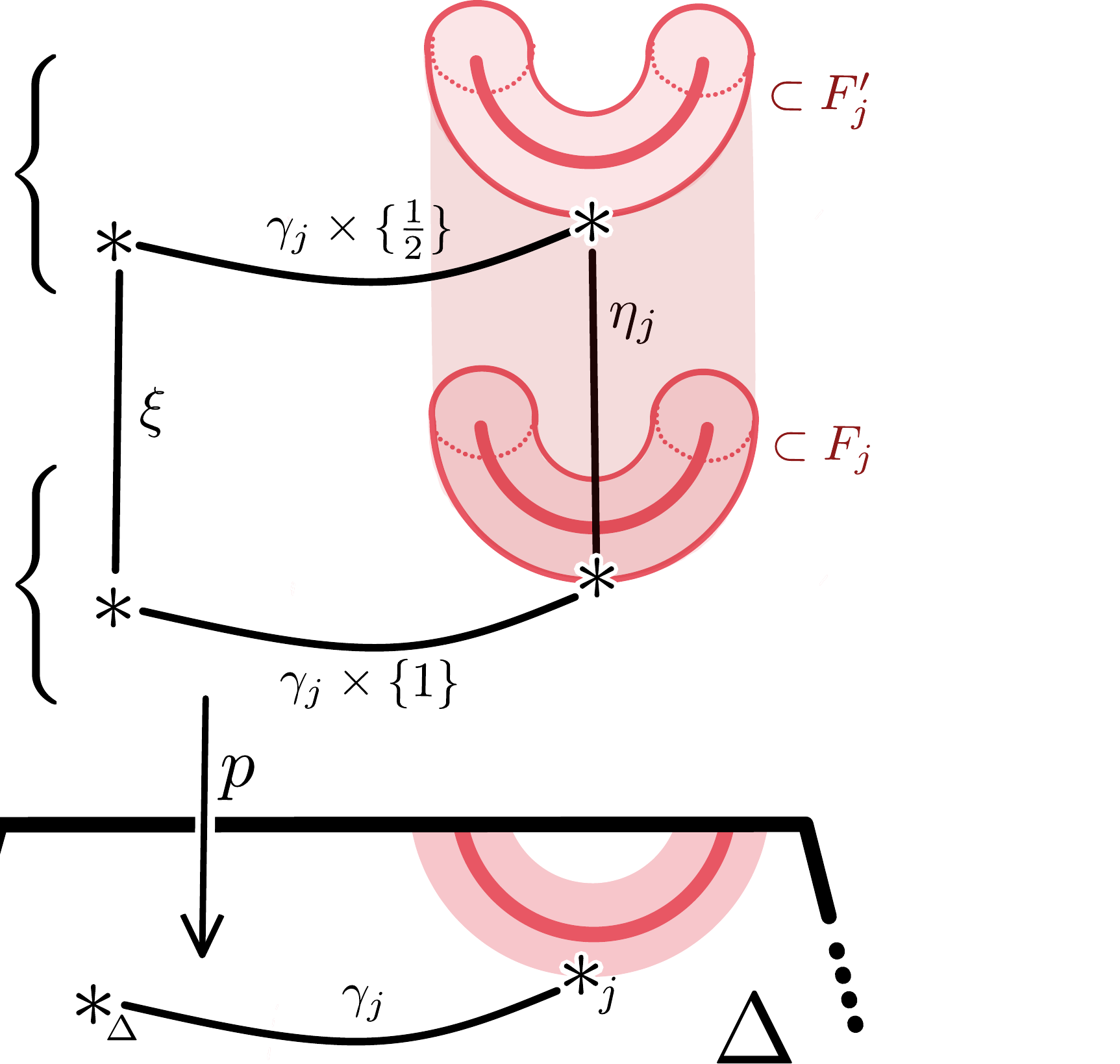}
\caption{ }\label{fig:projectionPeripheral}
\end{figure}

All of the basepoints listed above live in ${ X_{ 1} }$ and, with respect to the inclusions
$$ Y_{1 / 2} \xhookrightarrow{\;\; \iota_{1 / 2} \;\; } X_{ 1} \xhookleftarrow{\;\;\; \iota_{1} \;\;\; } Y_{1}      $$

we think of each ${ \eta_{j} }$ as being a path from ${(\iota_{1 / 2} \circ  i_{j}') (\ast) }$ to ${ (\iota_{1} \circ  i_{j})(\ast) }$ under which we obtain homotopy equivalences 
 \begin{equation}\label{eqn:homotopyEquivalences}
 \begin{gathered}
 (\iota_{1 / 2}\circ i_{j}') (\mu) \simeq \eta_{j}^{-1} \cdot (\iota_{1}\circ i_j) (\mu) \cdot \eta_{j} \\
 (\iota_{1 / 2}\circ i_{j}')(\lambda) \simeq \eta_{j}^{-1} \cdot (\iota_{1}\circ i_{j} )(\lambda) \cdot \eta_{j}        
 \end{gathered}
 \end{equation}

Using these paths, we also obtain representative images of the meridians and longitudes for ${ L }$ and ${ L' }$ as loops in ${ X_{1} }$ based at ${ \ast_{\Delta} \times \left\{1 / 2\right\} }$:
$$
\begin{alignedat}{2}
\mu_{j}' &= (\gamma_{j} \times \left\{1 / 2\right\})^{-1} \cdot  (\iota_{1 / 2}\circ i_{j}') (\mu) \cdot (\gamma_{j} \times \left\{1 / 2\right\})  \\  
\lambda_{j}' &= (\gamma_{j} \times \left\{1 / 2\right\})^{-1} \cdot  (\iota_{1 / 2}\circ i_{j}')(\lambda) \cdot (\gamma_{j} \times \left\{1 / 2\right\}) \\ 
\mu_{j} &= \xi^{-1}    \cdot (\gamma_{j} \times \left\{1 \right\})^{-1}  \cdot  (\iota_{1} \circ i_{j} )(\mu) \cdot (\gamma_{j} \times \left\{1 \right\})\cdot \xi  \\
\lambda_{j} &= \xi^{-1} \cdot (\gamma_{j} \times \left\{1 \right\})^{-1} \cdot  (\iota_{1} \circ i_{j} )(\lambda) \cdot(\gamma_{j} \times \left\{1 \right\})\cdot \xi
\end{alignedat}
$$
The classes of these loops generate $(\iota_{1/2})_{\ast}(\pi_1(Y_{1/2}))$ and $(\iota_1)_{\ast}(\pi_1(Y_1))$. Note that the existence of the disks ${ \Gamma_{j} }$ tell us that ${ [\alpha_{j}] = 1 \in \pi_{1}(X_{1}) }$. 
Combining this with the homotopy equivalences from (\ref{eqn:homotopyEquivalences}), we conclude that the $\pi_1(X_1, \ast_{\Delta} \times \{1 / 2\})$ elements satisfy.
\begin{align*}
[\mu_j] &= [\alpha_j] \cdot [\mu_{j}] \cdot [\alpha_j^{-1}]  \\ 
&= [(\gamma_{j} \times \left\{1 / 2\right\} )^{-1}   \cdot \eta_{j}^{-1}  \cdot (\iota_1 \circ i_j)(\mu) \cdot  \eta_j \cdot (\gamma_{j} \times \left\{1 / 2\right\} )]  \\ 
&= [(\gamma_{j} \times \left\{1 / 2\right\} )^{-1}   \cdot (\iota_{1 / 2} \circ i_j')(\mu) \cdot (\gamma_{j} \times \left\{1 / 2\right\} )]  \\ 
&= [\mu_{j}']  
\end{align*}
and similarly conclude ${ [\lambda_{j}'] = [\lambda_{\sigma(j)}]  }$. The above shows 
${ (\iota_{1 / 2})_{\ast} : \pi_{1}(Y_{ 1 / 2}) \rightarrow \pi_{1}(X_{1}) \cong \pi_{1}(Y_{1}) }$ respects the peripheral structure. Furthermore, since the ${ \pi_{1} }$ of a link complement is generated by classes of meridians, this likewise shows that the image of every generator of ${ \pi_{1}(Y_{1 / 2}) = \pi_{1}(S^{3} \setminus \nu L') }$ under the injective map ${ (\iota_{1 / 2})_{\ast} }$ can be identified with a generator ${ \pi_{1}(Y_{1}) = \pi_{1}(S^{3} \setminus \nu L) }$ and therefore the map is an isomorphism.

 $Y_{1/2} = S^{3} \setminus \nu L'$ and $Y_1 = S^{3}\setminus \nu L$ are complements of non-split links and are therefore Haken, so Waldhausen's Theorem (Theorem \ref{thm:Waldhausen}) gives a covering map ${ p : Y_{1 / 2} \rightarrow Y_{1}  }$ such that ${ (\iota_{ 1 / 2})_{\ast} = p_{\ast} }$. By our work above, the index of this covering map is one and therefore is a homeomorphism and, in fact, each meridian ${ \mu_{j}' }$ of ${ L' }$ is mapped under ${ p }$ to ${ \mu_{j} }$ of ${ L }$. Therefore ${ p }$ is a meridian-preserving homeomorphism between the complements of ${ L }$ and ${ L' }$ and we conclude ${ L = L' }$.

\vspace{5pt}

\sectionrule
\vspace{5pt}
\noindent
\textbf{Case ii.} One of ${ L }$ and ${ L' }$ is split.
\vspace{5pt}

\sectionrule 
\vspace{5pt}
\noindent

In light of Corollary \ref{cor:splitRestriction}, the hypothesis actually forces both ${ L }$ and ${ L' }$ to be split as otherwise, at most one of ${ C }$ or ${ C' }$ could be ribbon, yielding a contradiction. 

We change notation so that  ${ L_{0} = L }$ and ${ L_{1} = L' }$ and proceed by induction on the number of components $n$ in our links ${ L }$ and ${ L' }$. 
Ribbon concordance for knots forms the base case and this is established in \cite{Agol22}. 
For the inductive step, suppose that strong ribbon concordance is antisymmetric for $(n-1)$-component links. 
By Proposition \ref{prop:splitGeneric}, we may assume there are strong ribbon concordances 
$$C_{0 \to 1} : L_0 \to L_1 \qquad C_{1 \to 0} : L_1 \to L_0$$
which are witnessed as split by the embeddings
$$S_{0 \to 1}, S_{1 \to 0} : S^2 \times I \rightarrow S^3   \times I . $$
Moreover, the method used in the proof of Proposition \ref{prop:splitGeneric} reveals we may further assume the separating spheres for $L_1$ obtained from $S_{0\to 1}$ and $S_{1 \to 0}$ are identical.
This implies the composition 
$S_{1 \to 0} \circ S_{0 \to 1}$ is well-defined and witnesses $\Sigma := C_{1 \to 0} \circ C_{0 \to 1}$ as split. 
This means $\Sigma$ decomposes as $\Sigma' \cup (\Sigma \setminus \Sigma')$ where constituents of the union are separated by the image of $S_{1 \to 0} \circ S_{0 \to 1}$. For some $0<k<n-1$ there are $k$-component sublinks $L_i' \subsetneq L_i$ such that $\Sigma'$ is a ribbon concordance going from $L_0'$ to $L_0'$ and passes through $L_1'$. Similarly, $\Sigma \setminus \Sigma'$ goes from $L_0 \setminus L_0'$ to $L_0 \setminus L_0'$ and passes through $L_1 \setminus L_1'$.

We are done if we can show $L_0' = L_1'$ and $L_0\setminus L_0' = L_1 \setminus L_1'$.
Setting $X=(S^3 \times I)$, we let $X'$ be the (closure of) the component of $X \setminus (S^2 \times I)$ which contains $\Sigma'$ and $X \setminus X'$ be (the closure of) the component containing $\Sigma \setminus \Sigma'$. One may take $X'$ and $(X \setminus X')$ and fill in their respective $S^2 \times I$ boundary-strata with a $B^3 \times I$ to obtain two copies of $S^3 \times I$ that respectively contain the strong ribbon concordances $\Sigma'$ and $\Sigma \setminus \Sigma'$. By the inductive hypothesis, we conclude $L_0' = L_1'$ and $L_{0} \setminus L_0' = L_{1} \setminus L_1'$, and hence $L_0 = L_1$.
\end{proof}

\section{Ribbon minimality of fibered, strongly quasipositive links}\label{sec:StrongQPosFiberMinimality} 
The core of the proof of Boninger and Greene's Theorem is a pair of observations of Miyazaki.
Following a strategy suggested by Silver \cite{Sil92}, Miyazaki proved that if ${ K_{1} }$ is fibered and ${ K_{0} \leq K_{1} }$ then ${ K_{0} }$ is fibered \cite{Miy18}. 
This result was recently extended by Sun to the link case (Corollary 1.2, \cite{Sun26}). Miyazaki also noticed that if one additionally has ${ g_{3}(K_{0}) = g_{3}(K_{1}) }$, then an application of a lemma from \cite{Gor81} forces ${ K_{0} = K_{1} }$. We give a version of this latter result for the link case. 

\begin{proposition}[\cite{Gor81}, Lemmas 3.2 \& 3.4]\label{prop:fiberRibbonConcordance}
Suppose $L_1$ is fibered and non-split, ${ L_{0} \leq L_{1} }$, and ${ \chi_{3}(L_{0}) = \chi_{3}(L_{1}) }$, then ${ L_{0} = L_{1} }$.
\begin{proof}
Suppose ${ L_{0} }$ and ${ L_{1} }$ are ${ n }$-component links and  $C$ is a ribbon concordance from $L_0$ to $L_1$. We set notation 
$$Y_i = S^3 \setminus \nu L_i \qquad X = (S^3 \times I) \setminus \nu C \qquad \iota_{i}: Y_{i} \hookrightarrow X \text{ denotes }\partial\text{ inclusion} $$
As discussed in \S \ref{sec:infiniteCyclicCovers}, recent work of Sun \cite{Sun26} shows that ${ L_{0} }$ is forced to be fibered and one may construct an infinite cyclic covering ${ \widetilde{X} }$ whose boundary stratum contains the infinite cyclic fiber coverings of the ${ Y_{i} }$.
Following the contour of the proofs of Lemmas 3.2 and 3.4 from \cite{Gor81}, we study these infinite cyclic covers to obtain the result. We quickly outline the argument.
\begin{enumerate}
\item We first show that the map induced by inclusion ${ H_{1}(\widetilde{Y_{1}};\mathbb{Z}) \rightarrow H_{1} (\widetilde{X};\mathbb{Z})}$ is an isomorphism. 
This step explicitly uses the assumption that ${ L_{1} }$ is fibered and ${ \chi_{3}(L_{0}) = \chi_{3}(L_{1}) }$. 
\item Next we will show that $H_{2}(\widetilde{X};\mathbb{Z})\cong 0$. This again uses the fact that ${ L_{1} }$ is fibered. 
\item Using the first two items, we will use a theorem of Stallings to show the inclusion-induced map ${ \pi_{1}(Y_{1}) \rightarrow \pi_{1}(X) }$ is an isomorphism. 
\item We conclude the same as in the proof of Theorem \ref{thm:linkRibbonOrder}.
\end{enumerate}

\vspace{5pt}

\sectionrule
\vspace{5pt}
\noindent
\textbf{Step 1}: ${ H_{1}(\widetilde{Y_{1}};\mathbb{Z}) \xrightarrow{\cong} H_{1}(\widetilde{X};\mathbb{Z}) }$ 
\vspace{5pt}

\sectionrule 
\vspace{5pt}
\noindent
In the commutative diagram below, the horizontal maps are given by inclusion and the vertical maps come from extending scalars. Lemma \ref{lem:GordonsLemma} implies that the horizontal arrows are surjective; to prove injectivity of the upper horizontal arrow it suffices to show the left and bottom arrow are injective. 
\begin{equation}\label{eqn:diagram}  
\begin{tikzcd}  
H_1(\widetilde{Y_1}\text{; }\mathbb{Z}) \arrow[r] \arrow[d]  
& H_1(\widetilde{X}\text{; }\mathbb{Z}) \arrow[d] \\  
H_1(\widetilde{Y_1}\text{; }\mathbb{Q}) \arrow[r]  
& H_1(\widetilde{X}\text{; }\mathbb{Q})  
\end{tikzcd}  
\end{equation}

For the leftmost vertical arrow, since ${Y_{1} = S^{3} \setminus \nu L_{1} }$ is fibered, there's a Seifert surface $F$ of ${ L_{1} }$ such that ${ Y_{1} }$ is the mapping torus for some diffeomorphism of ${ F }$. This means $\widetilde{Y_1} \cong  F \times \mathbb{R}\simeq F$ and, since ${ F }$ has non-empty boundary it follows that ${ H_{1}(\widetilde{Y_{1}};\mathbb{Z}) }$ is a free abelian group, in particular it has no ${ \mathbb{Z} }$-torsion. It follows that the vertical map ${ H_{1}(\widetilde{Y_{1}};\mathbb{Z}) \rightarrow H_{1}(\widetilde{Y}_{1}; \mathbb{Z})\otimes_{\mathbb{Z}} \mathbb{Q} \cong  H_{1}(\widetilde{Y}_{1}; \mathbb{Q}) }$ on the left-hand side of Equation \ref{eqn:diagram} is injective.

For the bottom arrow, consider the portion of the long exact sequence of the pair
\begin{equation}\label{eqn:LES_Milnor_duality} 
H^{1} (\widetilde{X};\mathbb{Q}) \xrightarrow{i^{\ast} } H^{1} (\partial \widetilde{X};\mathbb{Q}) \xrightarrow{\delta} H^{2} (\widetilde{X}, \partial \widetilde{X}; \mathbb{Q})  
\end{equation}
Milnor's duality theorem for infinite cyclic covers (Theorem \ref{thm:MilnorDuality}) equips ${ H^{1}(\widetilde{X};\mathbb{Q}) }$ and ${ H^{1}(\partial \widetilde{X};\mathbb{Q}) }$ with . 
As described in \cite{Mil68} pg. 130, the maps ${ i^{\ast} }$ and ${ \delta }$ are adjoint with respect to this form\footnote{One has for any classes ${ a \in H^{1}(\widetilde{X};\mathbb{Q}) }$ and ${ b \in H^{1}(\partial \widetilde{X}; \mathbb{Q}) }$ that ${ \langle i^{\ast}a , b \rangle_{\partial \widetilde{X}} = \langle a , \delta b \rangle_{(\widetilde{X},\partial \widetilde{X})}  }$ where the left bilinear form is given by Milnor duality of infinite cyclic covers without boundary and the right is given by relative Milnor duality.} and, in particular, we may identify ${ \mathrm{ker}\, \delta }$ with the orthogonal complement of ${ \mathrm{Im}\, i^{\ast} }$. Exactness of (\ref{eqn:LES_Milnor_duality}) implies ${ \mathrm{Im}\, i^{\ast} }$ is equal to its own orthogonal complement and is therefore a half-dimensional subspace of ${ H^{1}(\partial \widetilde{X}; \mathbb{Q}) }$ on which the Milnor duality pairing vanishes. Since vector spaces have the same dimension as their duals, we conclude
$${ \dim H_{1}(\widetilde{X};\mathbb{Q}) = \frac{1}{2} \dim H_{1}(\partial \widetilde{X};\mathbb{Q})  }$$
However, we have ${ H_{1}(\partial \widetilde{X};\mathbb{Q}) = H_{1}(\widetilde{Y}_{0}\sqcup \widetilde{Y}_{1};\mathbb{Q}) \cong H_{1}(\widetilde{Y_{0}};\mathbb{Q}) \oplus H_{1}(\widetilde{Y_{1}};\mathbb{Q}) }$, which tells us 

\begin{equation}\label{eqn:dimensionAdditivity}
\dim H_{1}(\widetilde{X} ; \mathbb{Q}) = \frac{1}{2} \bigg( \dim H_{1}(\widetilde{Y_{0} };\mathbb{Q}) + \dim H_{1}( \widetilde{Y_{1} };\mathbb{Q})  \bigg)   
\end{equation}

Recall that ${ \chi_{3}(L_{0}) = \chi_{3}(L_{1})  }$ by assumption and that the covers ${ \widetilde{Y_{i}} }$ are exactly the coverings induced by the fiber classes ${ \iota_{i}^{\ast}(\omega) }$.
Our assumptions together with a theorem of Gabai \cite{Gab86} implies
$$ \dim H_{1}(\widetilde{Y_{0} }; \mathbb{Q}) 
= \dim H_{1}(\widetilde{Y_{1} }; \mathbb{Q})      
$$
 Applying the above to Equation \ref{eqn:dimensionAdditivity}, we have
 ${  \dim_{\mathbb{Q}}(H_{1}(\widetilde{X})) = \dim_{\mathbb{Q}}(H_{1}(\widetilde{Y_{1}}))}$, hence the horizontal map \newline ${ H_{1}(\widetilde{Y_{1}};\mathbb{Q}) \rightarrow H_{1}(\widetilde{X};\mathbb{Q}) }$ from Equation \ref{eqn:diagram} is injective by the pigeonhole principle.

\vspace{5pt}

\sectionrule
\vspace{5pt}
\noindent
\textbf{Step 2}: ${ H_{2}(\widetilde{X};\mathbb{Z})\cong 0 }$
\vspace{5pt}

\sectionrule 
\vspace{5pt}
\noindent

By Lemma \ref{lem:GordonsLemma} the inclusion ${\iota_{1} : Y_{1} \hookrightarrow X }$ induces a surjection ${ \pi_{1}(Y_{1}) \twoheadrightarrow \pi_{1}(X) }$; we abelianize and conclude the inclusion-induced maps ${ H_{1}(\widetilde{Y_{1}};R) \rightarrow H_{1}(\widetilde{X};R) }$ and  ${ H_{1}(\partial \widetilde{X}; R) \rightarrow H_{1}(\widetilde{X}; R) }$ are likewise surjective for any coefficient ring ${ R }$. The long exact sequence of ${ (\widetilde{X},\partial \widetilde{X}) }$ can be used to see ${ H_{1}(\widetilde{X}, \partial \widetilde{X}) \cong 0 }$.\footnote{Note that the boundary stratum of $X$ is connected; it is equal to $Y_0\sqcup Y_1 \sqcup \bigsqcup_{i=1}^n S^1\times D^2\times I$.

}

Let ${ R }$ be a field. Relative Milnor duality implies 
${ H^{2}(\widetilde{X};R) \cong \mathrm{Hom}_{R}\big(H^{1}(\widetilde{X}, \partial \widetilde{X};R),R\big) \cong H_{1}(\widetilde{X}, \partial \widetilde{X}; R) }$ and, dualizing again, one obtains
$${ 0 \cong \mathrm{Hom}\big(H_{1}(\widetilde{X}, \partial \widetilde{X}; R), R \big) 
\cong  \mathrm{Hom}\big(H^{2}(\widetilde{X}; R), R \big) \cong 
H_{2}(\widetilde{X};R) 
}$$
and the universal coefficient theorem implies ${ H_{2}(\widetilde{X};\mathbb{Z}) \otimes_{\mathbb{Z}} R \cong 0 }$. Setting ${ R = \mathbb{Q}}$, the above implies ${ H_{2}(\widetilde{X};\mathbb{Z}) }$ is ${ \mathbb{Z} }$-torsion. Setting ${ R = \mathbb{Z} / p\mathbb{Z} }$ for ${ p }$ a prime, one similarly concludes that the ``multiplication-by-${ p }$" map \newline
${ \times p :  H_{2}(\widetilde{X};\mathbb{Z}) \rightarrow H_{2}(\widetilde{X};\mathbb{Z})  }$ is surjective.

Recall that ${ H_{2}(\widetilde{X};\mathbb{Z}) }$ is a finitely generated module over the Noetherian ring ${ \mathbb{Z}[t^{\pm 1}] }$.
A surjective endomorphism of a finitely-generated module over a Noetherian ring is injective. Hence the map ${\times p}$ has trivial kernel for any prime ${ p }$ and we conclude ${ H_{2}(\widetilde{X};\mathbb{Z})\otimes_{\mathbb{Z}} \mathbb{Z} / p \mathbb{Z} \cong 0 }$ for all ${ p }$. Therefore, ${ H_{2}(\widetilde{X};\mathbb{Z}) \cong 0 }$.

\vspace{5pt}

\sectionrule
\vspace{5pt}
\noindent
\textbf{Step 3}: ${ \pi_{1}(\widetilde{Y_{1}}) \xrightarrow{\cong} \pi_{1}(\widetilde{X}) }$
\vspace{5pt}

\sectionrule 
\vspace{5pt}
\noindent

Let ${ G \cong \pi_{1}(\widetilde{Y_{1}}) }$ and ${ H \cong \pi_{1}(\widetilde{X}) }$. We adopt the notation ${ G_{\alpha} }$ and ${ H_{\alpha} }$ to denote the ${ \alpha^{th} }$ term in their lower central series. Note that since ${ G \cong \pi_{1}(F\times \mathbb{R}) \cong \pi_{1}(F) }$ is free, it follows that it's transfinitely nilpotent. 
By Step 1, ${ H_{1}(\widetilde{Y_{1}};\mathbb{Z}) \rightarrow H_{1}(\widetilde{X};\mathbb{Z}) }$ is an isomorphism and ${ H_{2}(\widetilde{X};\mathbb{Z}) \cong 0 }$, one would like to use a theorem of Stallings \cite{Sta65} to conclude that the maps 
${ \pi_{1}(\widetilde{Y_{1}}) \cong G \rightarrow H \cong  \pi_{1}(\widetilde{X}) }$ induce injections ${ G / G_{\alpha} \rightarrow H / H_{\alpha}  }$ for all ${ \alpha }$. The transfinite nilpotence of ${ G }$ implies ${ \pi_{1}(\widetilde{Y}_{1}) \rightarrow \pi_{1}(\widetilde{X}) }$ is injective. Using Gordon's lemma, one knows that ${ \pi_{1}(\widetilde{Y_{1}}) \rightarrow \pi_{1}(\widetilde{X}) }$ is surjective and therefore is an isomorphism. One may apply the five lemma to the diagram below to conclude that the inclusion-induced map ${ \pi_{1}(Y_{1}) \rightarrow \pi_{1}(X) }$ is an isomorphism.

\[\begin{tikzcd} 1 & {\pi_1(\widetilde{Y_1})} & {\pi_1(Y_1)} & {\mathbb{Z}} & 1 \\ 1 & {\pi_1(\widetilde{X})} & {\pi_1(X)} & {\mathbb{Z}} & 1 \arrow[from=1-1, to=1-2] \arrow["\cong"', from=1-1, to=2-1] \arrow[from=1-2, to=1-3] \arrow["\cong"', from=1-2, to=2-2] \arrow["{\omega \circ \phi}", from=1-3, to=1-4] \arrow["{(\therefore \;\cong)}"', from=1-3, to=2-3] \arrow[from=1-4, to=1-5] \arrow["{\cong }"', from=1-4, to=2-4] \arrow["{\cong }"', from=1-5, to=2-5] \arrow[from=2-1, to=2-2] \arrow[from=2-2, to=2-3] \arrow["{\omega \circ \phi}", from=2-3, to=2-4] \arrow[from=2-4, to=2-5] \end{tikzcd}\]

\vspace{5pt}

\sectionrule
\vspace{5pt}
\noindent
\textbf{Step 4}: ${ L_{0} = L_{1} }$
\vspace{5pt}

\sectionrule 
\vspace{5pt}
\noindent
The final step is identical to the final step for the non-split case in the proof of Theorem \ref{thm:linkRibbonOrder}. That is, we argue that the injective map ${ \pi_{1}(Y_{0}) \rightarrow \pi_{1}(X) \cong \pi_{1}(Y_{1}) }$ is an isomorphism respecting the peripheral structure and, by Waldhausen's theorem, obtain a homeomorphism between ${ Y_{0} }$ and ${ Y_{1} }$ which preserves meridians. Hence, ${ L_{0} = L_{1} }$.
\end{proof}
\end{proposition}

Recall the analogue of the ``smooth slice genus" of a link:
$$ \chi_4(L) = \max \left\{\chi(F) \: \middle| \; \substack{F \hookrightarrow D^4 \text{ is a smooth, connected} \\ \text{  properly-embedded, \& orientable} \\ \text{surface  with } \partial F = L}\right\} $$

\begin{theoremrep}{thm:BonGreeneGeneralization}
Fibered, strongly quasipositive links are ribbon minimal.
\begin{proof}
The proof is identical to that presented in \cite{BG24} only we work with the Euler characteristic for convenience.
Let $L_{1}$ be fibered and strongly quasipositive and suppose $L_{0} \leq L_{1}$. 
First, it is clear that $\chi_4(L_{0}) \geq  \chi_3(L_{0})$. Next, ${ L_{0} }$ and ${ L_{1} }$ are smoothly concordant, so $\chi_4(L_{0}) = \chi_4(L_{1})$. Results of Ni (Proposition \ref{prop:fiberGenusDetection}) and ribbon injectivity of ${ \widehat{\mathrm{HFL}} }$ (Theorem \ref{thm:ribbonInjectivity}) also imply ${ \chi_{3}(L_{0}) \geq \chi_{3}(L_{1}) }$. 
Finally, ${ L_{1} }$ is strongly quasipositive and work of Rudolph \cite{Rud93} implies ${ \chi_{3}(L_{1}) = \chi_{4}(L_{1}) }$. 
We therefore have
$$
\chi_4(L_1)= \chi_4(L_0) \geq \chi_3(L_0) \geq \chi_3(L_1) = \chi_4(L_1) \qquad \Rightarrow \qquad \chi_3(L_0) = \chi_3(L_1)
$$
Hence ${ L_{1} }$ is fibered and ${ \chi_{3}(L_{0}) = \chi_{3}(L_{1}) }$, so we are done by Proposition \ref{prop:fiberRibbonConcordance}. 
\end{proof}
\end{theoremrep}

\section{Ribbon minima via Heegaard Floer homology}

\subsection{Knot Floer homology \& ribbon minima}\label{sec:minimaDetectionViaKnotFloer}
Let ${ K }$ be a knot in ${ S^{3} }$.
Given a homogeneous element $x \in \widehat{\mathrm{HFK}}(K)$, its $\delta$-grading is $\delta(x) := A(x)-M(x)$. 
A knot $K$ is called \textit{Floer-thin} if $\widehat{\mathrm{HFK}}(K)$ is supported in a single $\delta$ grading. 
We warm up with a toy example. 

\begin{proposition}\label{prop:thinDetectedMinima}
Suppose $K \subset S^3$ is Floer-thin, has irreducible Alexander polynomial, and is not smoothly slice. If $\widehat{\mathrm{HFK}}$ detects $K$, then $K$ is a ribbon concordance minima.

\begin{proof}
Suppose $K$ satisfies the above conditions and $J \leq K$. 
By \cite{Gil84}, $\Delta_J$ divides $\Delta_K$ and since $\Delta_K$ is irreducible, either $\Delta_J  \in \{1,\Delta_K\}$.
Since $K$ is Floer-thin, the grading-preserving injection
$\widehat{\mathrm{HFK}}(J) \hookrightarrow \widehat{\mathrm{HFK}}(K)$ given by \cite{Zem19a} forces $J$ to be Floer thin. If $\Delta_J = 1$, then $\mathrm{rank}(\widehat{\mathrm{HFK}}(J))=1$ and is concentrated in Alexander grading $0$ and, by \cite{OS04a}, it follows $J$ is the unknot. 
Hence, $\Delta_J = \Delta_K$ and $K$ and $J$ Floer-thin forces
$\widehat{\mathrm{HFK}}(K) \cong \widehat{\mathrm{HFK}}(J)$.
\end{proof}
\end{proposition}

$\widehat{\mathrm{HFK}}$-detection results presently account for all knots of $\leq 5$ crossings--- $\widehat{\mathrm{HFK}}$ detects the unknot. \cite{OS04a} It also detects $3_1$, $4_1$ \cite{Ghi08}, $5_1$ \cite{FBW24}, and $5_2$ \cite{BS25a}, all of which are smoothly non-slice, Floer thin, and have irreducible Alexander polynomial. Hence, we obtain   
\begin{corollary}
$\widehat{\mathrm{HFK}}$ certifies ribbon minimality for all
knots with crossing number $\leq 5$. 
\end{corollary}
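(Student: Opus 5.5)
The plan is to check that every knot with crossing number at most five satisfies the hypotheses of Proposition \ref{prop:thinDetectedMinima}. The one exception is the unknot, which is minimal for a trivial reason. The knots in question are the unknot, $3_1$, $4_1$, $5_1$, and $5_2$; mirrors are handled identically, since the relevant properties are mirror-invariant and $\widehat{\mathrm{HFK}}$ of a mirror is obtained by reversing gradings.

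\textbf{The unknot.} Suppose $J \leq U$. By Zemke's injectivity (Theorem \ref{thm:ribbonInjectivity} for one component), $\widehat{\mathrm{HFK}}(J)$ injects into $\widehat{\mathrm{HFK}}(U) \cong \mathbb{F}$. It is nonzero, since its Euler characteristic is $\Delta_J \neq 0$, so it has rank one. Unknot detection \cite{OS04a} then gives $J = U$. Alternatively, the Seifert genus inequality from the introduction gives $g_3(J) \leq 0$ directly.

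\textbf{The nontrivial knots.} For $K \in \{3_1, 4_1, 5_1, 5_2\}$ I verify the four hypotheses of Proposition \ref{prop:thinDetectedMinima} in turn.
\begin{enumerate}
\item \emph{Floer-thin.} Each of these knots is alternating, and alternating knots are Floer-thin.
\item \emph{Irreducible Alexander polynomial.} The polynomials are
$t^2-t+1$, $t^2-3t+1$, $t^4-t^3+t^2-t+1$, and $2t^2-3t+2$ respectively. The first two, being quadratic with negative discriminant or with irrational roots, are irreducible over $\mathbb{Q}$. The polynomial $2t^2-3t+2$ has discriminant $-7$, so it is also irreducible. The fifth cyclotomic polynomial $\Phi_{10}$ is irreducible. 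Gilmer's divisibility is used over $\mathbb{Z}[t^{\pm1}]$ up to units, and each polynomial is primitive, so irreducibility over $\mathbb{Q}$ suffices.
\item \emph{Not smoothly slice.} $3_1$ and $5_1$ have nonzero signature, and so does $5_2$. The knot $4_1$ has zero signature, but it is not slice by the Fox--Milnor condition: $t^2-3t+1$ is not of the form $f(t)f(t^{-1})$, since its determinant $5$ is not a square.
\item \emph{$\widehat{\mathrm{HFK}}$-detection.} Detection of each knot is cited in the paragraph preceding the corollary.
\end{enumerate}

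Applying Proposition \ref{prop:thinDetectedMinima} to each of these four knots then finishes the argument. The only point needing care is that, for knots, "detects" should cover mirrors. Since each knot's mirror also satisfies the hypotheses, that point is routine. There is no real obstacle; the step most worth double-checking is sliceness of $4_1$, which the signature argument does not cover and which is handled instead by the Fox--Milnor obstruction above.
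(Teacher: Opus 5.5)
Your proposal is correct and follows the paper's route: for $3_1$, $4_1$, $5_1$, $5_2$ (and their mirrors) you verify the hypotheses of Proposition~\ref{prop:thinDetectedMinima}, using the detection results of \cite{Ghi08}, \cite{FBW24}, \cite{BS25a}, and you treat the unknot directly, a case the paper leaves implicit. The only blemish is cosmetic: $t^4-t^3+t^2-t+1$ is the tenth cyclotomic polynomial $\Phi_{10}$, not the fifth, but it is irreducible, so the argument is unaffected.
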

We now turn to the proof of Theorem \ref{thm:minimalKnots} which we divide into proofs of Propositions \ref{prop:Wh+} and \ref{prop:Wh^-and15}.
 
\begin{proposition}\label{prop:Wh+}
$Wh^+(T_{2,3},2)$ is ribbon concordance minimal.
\begin{proof}
Letting ${ K=Wh^+(T_{2,3},2) }$, the factors of $\Delta_K$ are linear and therefore cannot be Alexander polynomials for any knot, so $J \leq K$ implies $\Delta_J \in \{1, \Delta_K\}$. 
Where subscripts indicate Maslov gradings, the knot Floer homology of $K$ decomposed along the Alexander grading is 
\begin{equation}\label{eqn:BaldwinSivek_knotFloerComputation}
\widehat{\mathrm{HFK}}(K,a;\mathbb{Q}) \cong \begin{cases}
\mathbb{Q}^2_{(-1)} & a = 1 \\
\mathbb{Q}^4_{(-2)} \oplus \mathbb{Q}_{(0)} & a = 0 \\
\mathbb{Q}^2_{(-3)} & a=-1
\end{cases}
\end{equation}

The Euler characteristic of knot Floer homology over $\mathbb{Q}$ yields the (symmetric normalization of the) Alexander polynomial:
\begin{equation}\label{eqn:AlexanderPolyEulerCharWh+}
\sum\limits_{a\in \mathbb{Z}
} 
\left(\sum\limits_{m \in \mathbb{Z} 
} (-1)^m \mathrm{rank}\left(\widehat{\mathrm{HFK}}_m(K,a;\mathbb{Q})\right) \right)\cdot t^a = -2t^{-1} + 5 -2t
\end{equation}

Comparing Equations \ref{eqn:BaldwinSivek_knotFloerComputation} and \ref{eqn:AlexanderPolyEulerCharWh+}, one might notice for this example that within the summands indexed by the Alexander grading there is no internal cancellation from the alternating signs. In particular, since $J\leq K$ \cite{Zem19a} tells us $\widehat{\mathrm{HFK}}(J)$ is a subspace of $\widehat{\mathrm{HFK}}(K)$ and the only way one could have $J \leq K$ and $\Delta_J=1$ is if $\mathrm{rank}(\widehat{\mathrm{HFK}}(J))=1$ and concentrated in Alexander grading 0. By \cite{OS04a} this forces $J$ to be the unknot and implies $K$ is smoothly slice but $s(K)= 2$ so this is impossible.\footnote{We remark that $Wh^+(T_{2,3},2)$ is one of the examples used in \cite{HO08} to show that $\tau \neq \frac{s}{2}$. } 
Hence, we must have $\Delta_J = \Delta_K$ and, again, since there are no canceling terms in the graded Euler characteristic computation, this means the grading-preserving inclusion of knot Floer homology forces $\widehat{\mathrm{HFK}}(K;\mathbb{Q})\cong \widehat{\mathrm{HFK}}(J;\mathbb{Q})$. Finally, \cite{BS25a} tells us $J=K$ and concludes the proof.
\end{proof}
\end{proposition}

\begin{proposition}\label{prop:Wh^-and15}
The knots $Wh^-(T_{2,3},2)$ and $15n_{43522}$ are ribbon concordance minima.
\begin{proof}
Baldwin and Sivek proved \cite{BS25a} that $\widehat{\mathrm{HFK}}$ detects membership of
$$\{K_1=Wh^-(T_{2,3},2),\; K_2=15n_{43522} \}$$

The ${ K_{i} }$ have symmetric Alexander polynomial
$\widetilde{\Delta}_{K_i}=2t-3+2t^{-1}$ and Fox-Milnor obstructs them from being topologically slice--- the requisite $f(t) = at+b$ yields an impossible equality of $\mathbb{Z}[t^{\pm 1}]$-elements
$$f(t)f(t^{-1}) = ab(t+t^{-1})+(a^2+b^2) = 2(t+t^{-1})+3.$$
Additionally, note that $t \cdot \widetilde{\Delta}_{K_i}$ is irreducible in $\mathbb{Z}[t]$, so given some $J \leq K_i$, Gilmer's theorem and the Fox-Milnor obstruction imply $\Delta_J = \Delta_K$. \textit{A priori} it's possible that $K_1 \leq K_2$ or vice versa. The $K_i$ have the same $s$ and $\tau$ invariant, so one requires a subtler concordance invariant to obtain an obstruction.
Using \texttt{KnotJob} \cite{KnotJob} we are able to show
$$s_{Sq^1,-}^{odd}(K_1 \# \overline{K_2}) = -2$$
where $s_{Sq^1,-}^{odd}$ is a smooth concordance invariant that vanishes on smoothly slice knots, see \cite{DLS26}.
Hence $K_1$ and $K_2$ are not smoothly concordant.
The knot Floer homology of the $K_i$ is:
\begin{equation}\label{eqn:K_iKnotFloerHomology}
\widehat{\mathrm{HFK}}(K_i,a;\mathbb{Q}) \cong \begin{cases} 
\mathbb{Q}^2_{(0)} & a = 1 \\
\mathbb{Q}^4_{(-1)} \oplus \mathbb{Q}_{(0)} & a = 0 \\
\mathbb{Q}^2_{(-2)} & a=-1
\end{cases}
\end{equation}
The detection results, conditions on $\Delta_J$, and $\widehat{\mathrm{HFK}}$ ribbon inclusion results imply, for either value of ${ i }$, that if $J \leq K_i$ and $J \neq K_i$ one must have 
$$\widehat{\mathrm{HFK}}(J,a;\mathbb{Q}) \cong \widehat{\mathrm{HFK}}(\overline{5_2},a;\mathbb{Q}) \cong \begin{cases} 
\mathbb{Q}^2_{(0)} & a = 1 \\
\mathbb{Q}^3_{(-1)} & a = 0 \\
\mathbb{Q}^2_{(-2)} & a=-1
\end{cases}
$$
$\overline{5_2}$ is detected by $\widehat{\mathrm{HFK}}$ \cite{BS25a}, so it will suffice to show that neither $K_i$ is smoothly concordant to $\overline{5_2}$. We have 
$$\tau(\overline{5_2})=-1 \neq 0=\tau(K_i). $$
Hence, both $Wh^-(T_{2,3},2)$ and $15n_{43522}$ are ribbon concordance minima.
\end{proof}
\end{proposition}

\subsection{Ribbon minimal knots which are not transfinitely nilpotent}
In the previous section, we showed ${ Wh^{\pm}(T_{2,3},2) }$ are ribbon minimal. We now prove Theorem \ref{thm:notTransfinitelyNilpotent}, showing they are not transfinitely nilpotent.
The argument uses some group-theoretic results which may be unfamiliar to low-dimensional topologists. We quickly review relevant terminology. 

A group ${ G }$ is \textit{torsion-free} if no non-identity element has finite order. 
We say ${ G }$ is \textit{locally indicable} if every non-trivial, finitely-generated subgroup ${H \leq G }$ possesses a surjective group homomorphism  ${ \phi : H \twoheadrightarrow \mathbb{Z} }$. 
Given a presentation of ${ G }$, a word ${ w }$ in this presentation is \textit{reduced} if it is impossible to reduce the length of the word by cancelling adjacent inverse letters; we say ${ w }$ is \textit{cyclically reduced} if every cyclic permutation of the letters of ${ w }$ is reduced. We say ${ G }$ is a \textit{1-relator group} if it admits a presentation containing a single relation. 
Our argument uses two theorems from the group theory literature.
\begin{theorem}[\cite{How81}, Theorem 4.3]\label{thm:Freiheitssatz}
Suppose ${ G = (A \ast B) / N }$ where ${ A  }$ and ${ B }$ are locally indicable groups and ${ N }$ is the normal closure in the free product ${ A \ast B }$ of a cyclically reduced word ${ r }$ of length at least 2. Then the canonical maps ${ A \rightarrow G }$ and ${ B \rightarrow G }$ are injective.  
\end{theorem}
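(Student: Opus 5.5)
The statement is Howie's Freiheitssatz for locally indicable factors, and the paper only quotes it. If I had to reprove it, I would run a Magnus-type induction, with Howie's tower argument held in reserve for the step where the induction is hardest to control.

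\emph{Reduction to finitely generated factors.} Since $r$ is cyclically reduced of length at least $2$ in $A \ast B$, its syllables alternate between $A$ and $B$, so it involves letters of both factors. Let $A_0 \leq A$ and $B_0 \leq B$ be the subgroups generated by the letters of $r$. Both are finitely generated, nontrivial and locally indicable. Set $G_0 = (A_0 \ast B_0)/\langle\langle r \rangle\rangle$. Then
$$G \cong A \ast_{A_0} G_0 \ast_{B_0} B.$$
Suppose the theorem holds for $G_0$, i.e.\ $A_0 \to G_0$ and $B_0 \to G_0$ are injective. Then this is a genuine iterated amalgamated product, and $A \to G$ and $B \to G$ are injective by the normal form theorem for amalgams. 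So I would assume from now on that $A$ and $B$ are finitely generated.

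\emph{Inductive step via local indicability.} I would induct on a complexity of $r$, say its syllable length together with the word lengths of its syllables in chosen generators. Local indicability gives surjections $\alpha : A \twoheadrightarrow \mathbb{Z}$ and $\beta : B \twoheadrightarrow \mathbb{Z}$. From these I would build $\psi : A \ast B \to \mathbb{Z}$, a combination $m\alpha + n\beta$ chosen so that $\psi(r) = 0$. Then $r$ lies in $\ker \psi$.

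The kernel of $\psi$ is a free product of conjugates $t^k A_\alpha t^{-k}$ and $t^k B_\beta t^{-k}$, where $A_\alpha = \ker\alpha$, $B_\beta = \ker\beta$, and $t$ is a lift of a generator of $\mathbb{Z}$. Rewriting $r$ and its $t$-translates $r_k$ in these factors gives a staggered presentation of $\ker(G \to \mathbb{Z})$. Each $r_k$ involves only a finite window of factors and has smaller complexity than $r$. By induction, each one-relator piece embeds the factors it involves. The standard staggering argument, an iterated amalgamation over the factors shared by consecutive windows, then shows that every factor embeds in $\ker(G \to \mathbb{Z})$. In particular $A_\alpha$ and $B_\beta$ embed, and since $\alpha$ and $\beta$ factor through $\psi$, so do $A$ and $B$.

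\emph{Degenerate case.} If $\psi$ cannot be chosen so that $r$ involves more than one window, for example if all syllables of $r$ from one factor lie in the kernel of the relevant map, I would treat this separately. In that situation either the relator is already shorter, or one can pass to a different surjection from a finitely generated subgroup.

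\emph{Main obstacle.} The hard step is making the complexity genuinely decrease. The kernels $A_\alpha$ and $B_\beta$ need not be finitely generated, and they need not be locally indicable in a way that can be measured by a word length. So a naive length count may fail. My first attempt would be to replace each factor by the finitely generated subgroup spanned by the letters actually occurring in the rewritten relator, and measure complexity by the number of syllables.

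If that bookkeeping does not close up, I would switch to Howie's tower method. Model $G$ by the 2-complex $K = K_A \vee K_B \cup_r e^2$, take a putative reduced diagram witnessing a nontrivial element of $A$ dying in $G$, and lift it through a maximal tower of covers. Local indicability supplies the infinite cyclic covers at each stage of the tower. At the top of the tower, the resulting relative one-relator presentation admits a diagram argument, a weight test or asphericity argument, that rules out the witnessing diagram and gives the contradiction.
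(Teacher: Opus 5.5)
The paper gives no proof of this statement; it cites Howie, whose argument uses towers. Your reduction to finitely generated $A_0,B_0$ is correct, but the Magnus-type inductive step has a genuine gap. First, your description of $\ker\psi$ is wrong, and the staggering goes with it. If $\psi$ kills $B_0$ (which you can arrange when $\alpha(r)=0$) and $t\in A_0$ has $\alpha(t)=1$, Bass--Serre theory gives $\ker\psi=A_\alpha\ast\bigl(\ast_{k\in\mathbb{Z}}t^kB_0t^{-k}\bigr)$. There is a single copy of $A_\alpha$, normalized rather than translated by $t$, and it occurs in \emph{every} translate $r_k$. If instead $\psi=m\alpha+n\beta$ with $m,n\neq 0$, the quotient graph has $|m|+|n|$ vertices and $\mathbb{Z}$-many edges. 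So $\ker\psi$ is a free product of finitely many conjugates of $A_\alpha$ and $B_\beta$ with a free group of infinite rank, and again every translate meets the same vertex groups. In neither case is the presentation of $\ker(G\to\mathbb{Z})$ staggered, so the window-by-window amalgamation you invoke is not available. Second, nothing decreases. Set $s_i=\alpha(a_1\cdots a_i)$ and write $r_0=\prod_i a_i'b_i'$ with $a_i'=t^{s_{i-1}}a_it^{-s_i}\in A_\alpha$ and $b_i'\in t^{s_i}B_0t^{-s_i}$. The syllable count is unchanged unless some $a_i$ is a power of $t$. Moreover $A_\alpha$ is not smaller than $A_0$ in any well-founded sense: for $A_0$ free of rank $2$ it is free of infinite rank, and the $a_i'$ can generate a free group of rank $n$. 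So your proposed repair (pass to the subgroup generated by the rewritten letters and count syllables) measures nothing that goes down.

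Two ideas are missing. (a) Regrouping. Induct on syllable length over all pairs of locally indicable groups at once. When $\alpha(r)=0$, let $\mu<\nu$ be the extreme values of the $s_i$; they differ because $\alpha$ is onto $A_0$. Regard $r_k$ as a relator in $X\ast Y$, where $Y=t^{\nu+k}B_0t^{-\nu-k}$ and $X$ is the free product of $A_\alpha$ with the $t^jB_0t^{-j}$ for $\mu+k\le j<\nu+k$. Then $X$ is locally indicable, $r_k$ is cyclically reduced there, and it has fewer than $n$ syllables in $Y$. Doing the same at $\mu$, consecutive pieces amalgamate over $A_\alpha\ast t^{\mu+k+1}B_0t^{-\mu-k-1}\ast\cdots\ast t^{\nu+k}B_0t^{-\nu-k}$, which carries the shared factor. (b) The analogue of Magnus's nonzero exponent-sum case. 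Suppose every epimorphism $A_0\to\mathbb{Z}$ and every epimorphism $B_0\to\mathbb{Z}$ is nonzero on $r$, which forces $b_1(A_0)=b_1(B_0)=1$. Then no $\psi$ kills a factor. Magnus's substitution $a=x^{q}$, $b=yx^{-p}$ relies on the factors being cyclic and has no evident counterpart here, and your ``degenerate case'' paragraph does not address this situation. Finally, the tower paragraph only names Howie's method. In that method the finiteness you lack comes from the compact diagram rather than from the relator or the groups. The real work is the analysis at the top of a maximal tower, where local indicability of $A$ and $B$ must be played off against the absence of any further infinite cyclic cover. ``A weight test or asphericity argument'' does not say what that analysis is.
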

\begin{theorem}[\cite{Bro80}, \cite{How2000}]\label{thm:localIndicability}
If ${ G }$ is a torsion-free, 1-relator group, then it is locally indicable.
\end{theorem}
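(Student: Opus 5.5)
Since the paper imports this result from \cite{Bro80} and \cite{How2000}, the plan below reconstructs Howie's tower argument rather than anything the paper proves. The starting point is the classical structure theory of a one-relator group $G = \langle X \mid w \rangle$. By Karrass--Magnus--Solitar, $G$ has torsion exactly when $w = r^{m}$ with $m>1$ and $r$ not a proper power. So the hypothesis that $G$ is torsion-free lets us assume the relator $r$ is not a proper power. Then Lyndon's identity theorem says the presentation complex $P$ of $\langle X \mid r\rangle$ is aspherical. It follows that every connected subcomplex of every covering of $P$ is aspherical, since subcomplexes of aspherical $2$-complexes are aspherical (Whitehead). We will also use the Magnus Freiheitssatz freely.

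Let $H \le G$ be a nontrivial finitely generated subgroup. Realize it by a combinatorial map $f : K \to P$, where $K$ is a finite connected $2$-complex with $f_{\ast}(\pi_{1}K) = H$; a wedge of circles suffices. Following Howie, I would factor $f$ through a \emph{maximal tower} $K \to P_{n} \to \cdots \to P_{0} = P$. Each stage $P_{i+1}$ is a finite connected subcomplex of a connected covering of $P_{i}$, and the tower ends when $f$ no longer lifts to any proper covering of a compact subcomplex. Maximality forces $K \to P_{n}$ to be $\pi_{1}$-surjective, and it forces $H_{1}(P_{n};\mathbb{Z})$ to be finite or zero unless the tower can be extended. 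Since $\pi_{1}(P_{n})$ maps injectively into $G$ (composition of covering maps and subcomplex inclusions in the aspherical setting), it suffices to produce a surjection $\pi_{1}(P_{n}) \to \mathbb{Z}$. That surjection can then be pulled back to the image $H$.

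The main obstacle is showing that $b_{1}(P_{n}) > 0$ whenever $\pi_{1}(P_{n}) \neq 1$. The top $P_{n}$ is a finite complex whose $2$-cells are lifts of the single relator $r$, and it is aspherical. So $\chi(P_{n}) = 1 - b_{1} + b_{2}$, with $H_{2}(P_{n}) = \ker(\partial_{2})$ free abelian. I would first try to show that $\chi(P_{n}) \le 0$ unless $P_{n}$ is contractible. The tool is Howie's combinatorial analysis of how lifts of $r$ can be attached inside a finite subcomplex of a covering. Using the Freiheitssatz, a leaf-vertex or edge that meets only one $2$-cell can be collapsed, reducing to the case where every edge meets at least two $2$-cell boundaries. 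Then I would argue that $b_{2} = 0$ and $b_{1} \ge 1$. If $H_{1}(P_{n})$ were finite, a suitable finite abelian cover would still be a subcomplex of a covering of $P$ to which $f$ lifts. This contradicts maximality, yielding a map $\pi_{1}(P_{n}) \to \mathbb{Z}$ with nontrivial image, which we rescale to a surjection. If the direct Euler characteristic count fails, the fallback is Brodskii's route. Induct on the length of $r$ via the Magnus--Moldavanskii hierarchy, so that $G$ becomes an HNN extension of a shorter torsion-free one-relator group with free associated subgroups. Then show local indicability is preserved under such HNN extensions, using the locally indicable Freiheitssatz (Theorem \ref{thm:Freiheitssatz}) in its inductive form.
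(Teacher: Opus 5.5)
The paper does not prove this statement. It quotes it from \cite{Bro80} and \cite{How2000} and uses it as a black box in the proof of Theorem~\ref{thm:notTransfinitelyNilpotent}. Judged on its own terms, your preliminary reductions (Karrass--Magnus--Solitar, Lyndon) are fine, but the reconstruction has a genuine gap exactly where the relator has to be used.

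\textbf{The tower route.} The claim that $\pi_1(P_n)\to G$ is injective is false. Covering maps are $\pi_1$-injective, but inclusions of subcomplexes are not, even when both complexes are aspherical: the $1$-skeleton of the torus complex of $\langle a,b\mid [a,b]\rangle$ has $\pi_1=F_2$ mapping onto $\mathbb{Z}^2$. So you only know that $\pi_1(P_n)$ \emph{surjects} onto $H$. A homomorphism $\pi_1(P_n)\to\mathbb{Z}$ descends to $H$ only if it kills the kernel, which is the whole problem; there is nothing to pull back.

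Maximality also gives less than you claim. It only makes $K\to P_n$ surjective on $\pi_1$. Then $f'$ lifts to no proper connected cover of $P_n$, whatever $H_1(P_n)$ is, so the finite abelian cover ``contradicting maximality'' never arises.

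In fact, with $K$ a wedge of circles the tower sees nothing. Since $f(K)\subset P^{(1)}$, if you pass at each stage to the smallest subcomplex containing the image, every stage is a graph and the tower amounts to Stallings folding. Its top is a finite graph whose $\pi_1$ is the subgroup of $F(X)$ generated by your words. It has $b_1>0$ for trivial reasons, and $r$ never enters.

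Choosing $K$ with $\pi_1(K)\to H$ an isomorphism would force $\pi_1(P_n)\cong H$, but that needs $H$ to be finitely presented. Even then you would still need the real content, $b_1(P_n)>0$, which you only sketch:
\begin{itemize}
\item ``$\chi(P_n)\le 0$ unless contractible'' is not argued.
\item Asphericity does not give $b_2=0$; the torus complex has $b_2=1$.
\item ``Subcomplexes of aspherical $2$-complexes are aspherical'' is the open Whitehead asphericity conjecture, not a theorem.
\end{itemize}

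\textbf{The fallback.} It puts the entire difficulty into ``show local indicability is preserved under such HNN extensions'', and that is false without extra input. An HNN extension of the free group $F_n*F_n$ that identifies a finite-index subgroup of one factor with one of the other contains the amalgam $F_n*_{F_m}F_n$, and Burger--Mozes constructed simple groups of exactly this form.

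What the Magnus--Moldavanskii induction actually needs is the following. Since $G/N\cong\mathbb{Z}$, local indicability of $G$ reduces to that of the kernel $N$ of $t\mapsto 1$. Every finitely generated subgroup of $N$ lies in a $t$-conjugate of some $N_k$, the group presented by $k+1$ consecutive shifted copies $s_0,\dots,s_k$ of the shorter relator. Moreover $N_k=(N_{k-1}*F)/\langle\!\langle s_k\rangle\!\rangle$, with $F$ free on the generators first appearing in $s_k$. So the inductive step is precisely that a one-relator product of locally indicable groups, with relator not a proper power, is again locally indicable.

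Theorem~\ref{thm:Freiheitssatz} only says that $N_{k-1}$ and $F$ embed in $N_k$. It says nothing about finitely generated subgroups of $N_k$ mapping onto $\mathbb{Z}$. That step is the missing idea, and it is the real content of the cited theorem; it is supplied by Howie's theorem on one-relator products of locally indicable groups, or by Brodski{\u\i}'s analysis of Magnus subgroups.
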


We will also require a pair of basic lemmas whose proofs we supply for the reader's convenience. Recall the construction of the transfinite lower central series ${ \{\gamma_{\alpha}(G)\} }$ from the beginning of \S \ref{sec:surveyRibbonMinimality}.
\begin{lemma}\label{lem:rnilInheritance}
If $G$ is transfinitely nilpotent, then every subgroup of ${ G }$ is transfinitely nilpotent.
\begin{proof}
Given ${ H \leq G }$, it follows from transfinite induction that ${ \gamma_{\alpha}(H) \leq \gamma_{\alpha}(G) }$ for every ordinal ${ \alpha }$. 
In particular, for any finite ordinal $\alpha$ one has ${ \gamma_{\alpha}(H)\leq \gamma_{\alpha}(G) }$. Given the containment holds for some ordinal $\alpha$, one likewise has
$$ \gamma_{\alpha + 1}(H) = [\gamma_{\alpha}(H), H ] \leq [\gamma_{\alpha}(G), G ] = \gamma_{\alpha+1}(G)  $$ and so at a limiting ordinal one likewise has
$$ \gamma_{\lambda}(H) = \bigcap\limits_{\alpha < \lambda} \gamma_{\alpha}(H) \leq \bigcap\limits_{\alpha \leq \lambda} \gamma_{\alpha} (G) = \gamma_{\lambda} (G)      $$ 
If ${ G }$ is transfinitely nilpotent, then there is an ordinal ${ \alpha }$ such that ${ \gamma_{\alpha}(G)= \left\{1\right\} }$ and this forces ${ \gamma_{\alpha}(H) = \left\{1\right\} }$.
\end{proof}
\end{lemma}

\begin{lemma}\label{lem:stabilizationIfZabelianization}
If $G_{ab}= G / \gamma_{1}(G) \cong \mathbb{Z}$, then ${ \gamma_{1}(G) = \gamma_{i}(G)}$ for all ${ i \geq 1 }$.\footnote{Note this statement is stronger than ${ \gamma_{1}(G) \cong \gamma_{i}(G) }$ for all ${ i \geq 1 }$.}
\begin{proof}
The result follows inductively from showing ${ \gamma_{1}(G) = \gamma_{2}(G) }$. Consider the quotient ${ Q = G / \gamma_{2}(G) }$. Note that ${ \gamma_{2}(Q) = \left\{1\right\} }$ by construction and therefore ${ [Q,Q] }$ is central in ${ Q }$. Furthermore, using the commutator quotient formula and the fact that ${ \gamma_{2}(G) \leq \gamma_{1}(G) }$, we obtain
\begin{equation}\label{eqn:Qcommutator}
[Q,Q] = [G/ \gamma_{2}(G), G / \gamma_{2}(G)] = \Big([G,G] \gamma_{2}(G)\Big) / \gamma_{2}(G) \cong  \gamma_{1}(G) / \gamma_{2}(G)  
\end{equation}
We therefore have
$$ Q_{ab} = \frac{G / \gamma_{2}(G) }{ \gamma_{1}(G) / \gamma_{2} (G) } \cong G / \gamma_{1} (G) = G_{ab} \cong  \mathbb{Z}  $$ Let ${ t \in Q }$ be such that its image generates ${ Q_{ab} }$. Every element of ${ Q }$ can be written in the form ${ t^{n}z }$ where ${ z \in [Q,Q] }$. 
Given arbitrary elements ${ t^{m}z_{1}, t^{n}z_{2} \in Q }$, the fact that ${ [Q,Q] }$ is central in ${ Q }$ implies
$$[t^{m} z_{1} , t^{n} z_{2} ] = [t^{m},t^{n}  ] =    1         $$ 
which means that ${ Q }$ is abelian and therefore ${ [Q,Q] = \left\{1\right\} }$. Equation \ref{eqn:Qcommutator} therefore implies ${ \gamma_{1}(G) / \gamma_{2}(G) = \{1\} }$ and therefore ${ \gamma_{1}(G) = \gamma_{2}(G) }$.
\end{proof}
\end{lemma}

\begin{theoremrep}{thm:notTransfinitelyNilpotent}
${ K_{+} = Wh^{+}(T_{2,3},2) }$ is neither ${ \mathbb{Q} }$-anisotropic nor transfinitely nilpotent. ${ K_{-} = Wh^{-}(T_{2,3},2) }$ is ${ \mathbb{Q} }$-anisotropic but not transfinitely nilpotent.
\begin{proof}
Recall $\Delta_{K_{+}} = (2t-1)(t-2)$. Set $\widetilde{X}$ to be the infinite cyclic cover of $X := S^3 \setminus \nu K_{+}$. The Alexander module of $K_{+}$ is semisimple, admitting a decomposition into two 1-dimensional $t$-invariant subspaces
$$H^1(\widetilde{X}; \mathbb{Q})\cong \mathrm{Hom}\left(\frac{\mathbb{Q}[t^{\pm 1}] }{\langle (t-2 )(2t-1 )  \rangle}, \mathbb{Q}\right ) \cong \mathbb{Q}_{(2)}  \oplus \mathbb{{Q}}_{(1 /2)} $$
where subscripts indicate the eigenvalue of the ${ t }$-action on the relevant subspace. Skew-symmetric bilinear forms vanish on 1-dimensional subspaces, hence $K_{+}$ is not $\mathbb{Q}$-anisotropic.

We now show that ${ K_{+} }$ is not transfinitely nilpotent.
Set ${ G = \pi_{1}(S^{3} \setminus \nu K_{+} ) }$, using \texttt{SnapPy} \cite{SnapPy} we obtain the presentation 
$$
G  \cong \left\langle a,b,c \;\middle|\;  
\begin{array}{rcl}  
r_{1} &=& a^{2}c^{2}bc^{-1} bca^{-1} c^{-1} b^{-1} cb^{-1} c^{-2} b^{-1} cb^{-1} c^{-2} a^{-1} c^{2} bc^{-1} bc \\  
r_{2} &=& b^{2}cb^{-1}c  
\end{array}  \;
\right\rangle
$$
The above yields a presentation ${ G_{ab} \cong \langle a, b, c \; | \; r_{1}' = c, \;r_{2}'=bc^{2} \rangle \cong \langle a \rangle \cong   \mathbb{Z}}$.
Consider the group ${ H = \langle b, c \; | \; r_{2} \rangle }$ and observe that ${ G \cong H \ast \langle a \rangle / \langle\! \langle r_{1} \rangle\! \rangle  }$ where ${ \langle\! \langle r_{1} \rangle\! \rangle }$ denotes the normal closure of the word ${ r_{1} }$ inside the free product ${ H \ast \langle a \rangle }$. The relator ${ r_{2} }$ is not a proper power, so ${ H }$ is a torsion-free, 1-relator group and therefore by Brodski{\u\i}'s theorem (Theorem \ref{thm:localIndicability}) ${ H }$ is locally indicable. ${ \langle a \rangle \cong \mathbb{Z} }$ is likewise locally indicable and ${ r_{1} }$ is cyclically reduced, so Howie's theorem (Theorem \ref{thm:Freiheitssatz}) tells us that the canonical group homomorphism ${ H \rightarrow G }$ is injective--- we abuse notation and let ${ H }$ denote the isomorphic subgroup of ${ G }$.
One has ${ \mathbb{Z} \cong G_{ab} = G / \gamma_{1}(G) }$ is generated by the image of ${ a }$ under the quotient map, so ${ b, c \in \gamma_{1}(G) }$ and we conclude ${ H \leq \gamma_{1}(G) }$. 
Our goal is to show that ${ \gamma_{1}(G) }$ is not transfinitely nilpotent--- we do so by showing ${ H }$ is not transfinitely nilpotent and then applying Lemma \ref{lem:rnilInheritance}.
Note ${ H / \gamma_{1}(H) = H / [H,H] \cong \langle b, c \; | \; bc^{2}\rangle \cong \mathbb{Z}  }$ and so Lemma \ref{lem:stabilizationIfZabelianization} tells us that ${ \gamma_{i}(H) = \gamma_{1}(H) }$ for all ${ i \geq 1 }$. 
Note ${ \gamma_{1}(H) = \{1\} }$ if and only if ${ H }$ is abelian, however the group homomorphism
$${ \phi : H \rightarrow S_{3} \qquad   \phi: \begin{cases} b \mapsto  (123) \\  c \mapsto (23) \end{cases} \qquad \text{ satisfies } \qquad \phi(bc)\neq \phi(cb). }$$
Hence, ${ \{1\}\neq \gamma_{1}(H) = \gamma_{\alpha}(H) }$ for all ordinals ${ \alpha > 1 }$ and it follows that ${ H }$ is not transfinitely nilpotent and therefore neither is ${ [G,G] }$. 
\newline
We now turn to ${ K_{-} = Wh^{-}(T_{2,3},2) }$. Since ${ \Delta_{K_{-}} }$ is irreducible, it follows immediately that ${ K }$ is ${ \mathbb{Q} }$-anisotropic. 
Let ${ G = \pi_{1}(S^{3} \setminus \nu K_{-}) }$; we can show that ${ [G,G] }$ is not transfinitely nilpotent using a very similar strategy.  Using \texttt{SnapPy} \cite{SnapPy} we obtain the presentation 
$$
G  \cong \left\langle a,b,c \;\middle|\;  
\begin{array}{rcl}  
r_{1} &=& a c^{-1} b c^{-1} b^{-3} a^{-1} b^3 c b^{-1} c a^{-1} b^{-1} c^{-1} b c^{-1} b^{-2} a b^4 c b^{-1} c \\  
r_{2} &=& b c^{-1} b c^2  
\end{array}  \;
\right\rangle
$$

We can repeat the same strategy as we did for ${ Wh^{+}(T_{2,3},2) }$. In particular we observe that in ${ G_{ab} }$ the relations descend to ${ r_{1}' = b }$ and ${ r_{2}' = b^{2}c }$, so ${ G_{ab} }$ is generated by the image of ${ a }$. Setting ${ H = \langle b,c \; | \; r_{2} \rangle  }$, we can show that ${ H \rightarrow  G \cong H \ast \langle a \rangle / \langle \! \langle r_{1} \rangle \! \rangle }$ is injective using Howie's theorem. Since ${ H_{ab} \cong \mathbb{Z} }$, we conclude that ${ \gamma_{1}(H) = \gamma_{i}(H) }$ for all ${ i \geq 1 }$ and we can show ${ \gamma_{1}(H) \neq \left\{1\right\} }$ by means of the homomorphism
$${ \phi : H \rightarrow S_{3} \qquad   \phi: \begin{cases} b \mapsto  (12) \\  c \mapsto (123) \end{cases}\;. }$$
\end{proof}
\end{theoremrep}

\subsection{Link Floer homology \& ribbon minima}\label{sec:linkMinima}

Suppose we wish to show $L$ is ribbon minimal. As in the knot case, one considers a counterfactually distinct $L'$ with $L' \leq L$ and tries to impose constraints on $\Delta_{L'}^{tor}$ which force $L'$ to have the same link Floer homology. For our analysis of 2-component minimal links, a result of Davis plays a similar r\^{o}le to Freedman's theorem in the knot case.  

\begin{theorem}[\cite{Dav06}]\label{thm:DavisThm}
A 2-component link with Alexander polynomial $\pm 1$ is topologically concordant to a Hopf link.
\end{theorem}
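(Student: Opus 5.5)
The plan is to follow the surgery-theoretic route. Let $L = L_1 \sqcup L_2$ with $\Delta_L = \pm 1$, let $Y_L = S^3 \setminus \nu L$, and let $H$ be the Hopf link, whose exterior is $T^2 \times I$. The aim is a compact topological $4$-manifold $W$ whose boundary consists of
\begin{itemize}
\item $Y_L$,
\item $-(T^2 \times I)$,
\item two copies of $T^2 \times I$ joining corresponding boundary tori,
\end{itemize}
which has the homotopy type of a concordance exterior. Gluing $S^1 \times D^2 \times I$ along the two boundary products then gives a $5$-dimensional candidate $V \supset$ two annuli, and the last step is to recognize $V \cong S^3 \times I$.

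First, use the Torres condition (Theorem \ref{thm:TorresCondition}): $\Delta_L(t_1,1) = \frac{t_1^{\ell}-1}{t_1-1}\Delta_{L_1}(t_1)$. Together with $\Delta_L = \pm 1$ this forces $\ell k(L_1,L_2) = \pm 1$ and $\Delta_{L_i} = 1$ for both components. Reorienting a component if necessary, we may assume $\ell k = 1$, matching $H$. Since $\Delta_L$ is the order of the Alexander module $A_L$, the module $H_1(Y_L;\mathbb{Z}[\mathbb{Z}^2])$ relative to a basepoint fibre vanishes. A Wang/Milnor-type argument then shows that the map $f : Y_L \to T^2$ classifying $\pi_1(Y_L) \to \mathbb{Z}^2$ (sending meridians to the standard basis) is a $\mathbb{Z}[\mathbb{Z}^2]$-homology equivalence. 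We arrange $f$ to restrict to homeomorphisms on each boundary torus, sending meridian and longitude to the Hopf meridian and longitude; this is possible because $\ell k = 1$.

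Next, form the degree-one normal map
\[
F : Y_L \cup \partial\text{-products} \to \partial (T^2 \times I \times I)
\]
and try to extend it over a $4$-dimensional normal bordism rel boundary to a $\mathbb{Z}[\mathbb{Z}^2]$-homology equivalence onto $T^2 \times I \times I$. The bordism exists because normal invariants of a degree-one map to $T^2$ (with its trivial bundle data) can be arranged; the $3$-manifold $Y_L$ is framed. The surgery obstruction to making the bordism a homotopy equivalence rel boundary lies in $L_5(\mathbb{Z}[\mathbb{Z}^2])$. Since $\mathbb{Z}^2$ is a good group in Freedman's sense, topological $4$-dimensional surgery applies.

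\textbf{This is the step I expect to be the main obstacle.} We must show the obstruction can be made to vanish. My first attempt would use Shaneson splitting,
\[
L_5(\mathbb{Z}[\mathbb{Z}^2]) \cong L_5(\mathbb{Z}) \oplus L_4(\mathbb{Z})^2 \oplus L_3(\mathbb{Z}) = 0 \oplus \mathbb{Z}^2 \oplus 0,
\]
where the two $L_4(\mathbb{Z})$ summands are signatures of codimension-one transverse preimages. I would kill these by changing the normal bordism: take a connected sum with copies of $E_8$-type manifolds, or reselect the normal invariant along the circle factors. The boundary being a $\mathbb{Z}[\mathbb{Z}^2]$-homology equivalence is what makes the relative obstruction well defined. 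If the pure $L$-group approach does not close, the fallback is Cappell–Shaneson homology surgery with $\Gamma$-groups for $\mathbb{Z}[\mathbb{Z}^2] \to \mathbb{Z}$.

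Finally, with $W$ homotopy equivalent to $T^2 \times I$ rel boundary, glue in $S^1 \times D^2 \times I$ to obtain $V$ containing two locally flat annuli from $L$ to $H$. A van Kampen argument using meridians shows $\pi_1(V) = 1$. The homology of $V$ is that of $S^3 \times I$, and $\partial V = S^3 \sqcup S^3$. Hence $V$ is a topological $h$-cobordism between $3$-spheres. Freedman's results then identify it with $S^3 \times I$: cap the ends with balls to get a homotopy $4$-sphere, which is $S^4$. This exhibits the topological concordance to the Hopf link.
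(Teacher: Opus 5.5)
The paper gives no proof of this statement; it quotes Davis. Your architecture is the right one: $\Delta_L=\pm1$ makes $f\colon Y_L\to T^2\times I$ a $\mathbb{Z}[\mathbb{Z}^2]$-homology equivalence that is a homeomorphism on the boundary, $\mathbb{Z}^2$ is good, and the closing $h$-cobordism step via Freedman is fine. The genuine gap is in the step you flagged, which is set up in the wrong dimension. The normal bordism $W$ is a $4$-manifold (so is $V$; it is not $5$-dimensional). Its relative surgery obstruction therefore lies in
\[
L_4(\mathbb{Z}[\mathbb{Z}^2])\cong L_4(\mathbb{Z})\oplus L_3(\mathbb{Z})^2\oplus L_2(\mathbb{Z})\cong\mathbb{Z}\oplus\mathbb{Z}/2,
\]
not in $L_5$. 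There are also no ``signatures of codimension-one transverse preimages'', because in $W^4$ those preimages are $3$-manifolds. Connected sum with $E_8$-manifolds handles the $L_4(\mathbb{Z})$ summand. Nothing in your plan addresses the $L_2(\mathbb{Z})$ summand, which is the Arf invariant of the transverse preimage of $\mathrm{pt}\times I\times I$. It can be killed, but you must say how. One way is to alter the normal bordism by the $\pi_2(G/\mathrm{TOP})=\mathbb{Z}/2$ class on the $2$-cell of $(T^2\times I\times I)/\partial\simeq S^2\vee S^3\vee S^3\vee S^4$. This works because the surgery obstruction map $[(T^2\times I\times I)/\partial,\,G/\mathrm{TOP}]\to L_4(\mathbb{Z}[\mathbb{Z}^2])$ is onto (Shaneson splitting, i.e.\ the assembly map for $\mathbb{Z}^2$ is an isomorphism), and topological transversality realizes the new normal map.

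The existence of a normal bordism in the first place is also not justified by $Y_L$ being framed. The normal invariant of $f$ rel boundary lies in $[(T^2\times I)/\partial,\,G/\mathrm{TOP}]\cong(\mathbb{Z}/2)^2$. Its coordinates are the Arf invariants of the codimension-one splittings along the annuli $S^1\times\mathrm{pt}\times I$ and $\mathrm{pt}\times S^1\times I$. Their preimages are Seifert surfaces of $L_2$ and $L_1$, each punctured by the other component, so the coordinates are $\mathrm{Arf}(L_2)$ and $\mathrm{Arf}(L_1)$, independent of the choice of bundle data. They are nonzero for, e.g., a Hopf link with a trefoil tied into one component, which still has $\ell k=\pm1$, so the reasoning you give would prove too much. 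The missing input is the conclusion $\Delta_{L_i}=1$ that you drew from the Torres condition but never used: $\Delta_{L_i}(-1)=1$ forces $\mathrm{Arf}(L_i)=0$.

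A smaller inaccuracy: it is the absolute module $H_1(\widetilde{Y_L})$ that vanishes; the module relative to a fibre has rank one. Having order one does not by itself force a $\mathbb{Z}[\mathbb{Z}^2]$-module to vanish, since $\mathbb{Z}[\mathbb{Z}^2]/(t_1-1,t_2-1)$ has order one. What works is that, for a two-component link, the kernel of $\partial_1$ in the cellular chain complex of the cover is free. So $H_1(\widetilde{Y_L})$ has a square presentation matrix whose determinant is $\Delta_L$ up to a unit. Invertibility of that matrix gives both $H_1(\widetilde{Y_L})=0$ and $H_2(\widetilde{Y_L})=0$.
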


 Given fixed ${ n \in \mathbb{Z} }$, we let $H_{n}^{-}$ denote ${ T(2,2n) }$ with its components oriented oppositely. 
\begin{theoremrep}{thm:HopfLinksMinimal}
The links $H_n^-$ are ribbon minimal for all $n$.
\begin{proof}
In Theorem 3.1 of \cite{BM24}, it is shown that $\widehat{\mathrm{HFK}}$ detects $H_n^-$ and, since knot Floer homology can be recovered from link Floer homology by applying a shift to the Maslov grading, 
 it follows that $\widehat{\mathrm{HFL}}$ detects $H_n^-$. The link Floer homology of $H_n^-$ was computed in \S 12.1 of \cite{OS08a}, for the reader's convenience we present it in Equation \ref{eqn:HFL_H_minus_n} below.
\begin{equation}\label{eqn:HFL_H_minus_n}
\widehat{\mathrm{HFL}}(H_{n}^-,a_1,a_2)\cong 
\begin{cases}
& 
\includegraphics[width = 0.45\linewidth]{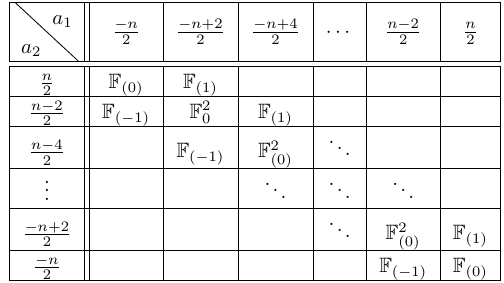}
\end{cases}
\end{equation}
As before, the subscripts indicate the Maslov grading of a summand and $(a_1,a_2) \in \mathbb{H}$ is an Alexander multigrading. Applying Equation \ref{eqn:LinkFloerEulerChar} to the above, one obtains
$$ \Delta_{H_{n}^{-}  }(t_1,t_2) = t_{1}^{\frac{1-n}{2}} t_{2}^{\frac{1-n}{2}} \sum\limits_{i=0}^{n-1} (t_{1} t_{2}   )^{i}  = t_{1}^{\frac{1-n}{2}} t_{2}^{\frac{1-n}{2}}\frac{(t_{1}t_{2}  )^{n} -1 }{t_{1} t_{2} -1}    $$
and, since the above is non-zero, we have $\Delta_{H_n^-} = \Delta_{H_n^-}^{tor}$.

Suppose $L = L_1 \sqcup L_2$ is a 2-component link such that ${ L \leq H_{n}^{-} }$.
Note there is no subspace of $\widehat{\mathrm{HFL}}(H_n^-)$ with graded Euler characteristic zero, hence the grading-preserving injection $\widehat{\mathrm{HFL}}(L) \hookrightarrow \widehat{\mathrm{HFL}}(H_n^-)$ induced by a ribbon concordance from $L$ to $H_n^-$ forces $0\neq \Delta_L = \Delta_L^{tor}$.\footnote{Recall from Definition \ref{defn:AlxPolys} that when the Alexander module is not torsion, then one defines $\Delta_L = 0$. Since $\Delta_L^{tor}$ is the generator of the order ideal of the torsion part of the Alexander module, it follows that $\Delta_L \neq 0$ implies $\Delta_L = \Delta_L^{tor}$.} 
Milnor \cite{Mil54} proved that the linking number is link homotopy invariant, and therefore a topological concordance invariant \textit{a fortiori}. Hence, the condition $L \leq H_n^-$ implies  $\ell k (L_1,L_2)=-n$. 

The Torres condition (Theorem \ref{thm:TorresCondition}) for the Alexander polynomial states that 
$$\Delta_{L}(t_1,1)= \frac{t_1^{n}-1}{t_1-1} \cdot \Delta_{L_1}(t_1) \qquad \text{ and } \qquad \Delta_{L}(1,t_2)= \frac{t_2^{n}-1}{t_2-1} \cdot \Delta_{L_2}(t_2)$$
Since $\Delta_L = \Delta_L^{tor}$ divides $\Delta_{H_n^-} = \Delta_{H_n^-}^{tor}$ the above implies $\Delta_L \in \{1,\Delta_{H_n^-}\}$. Since the linking number is invariant under topological concordance, Theorem \ref{thm:DavisThm} tells us $\Delta_L=1$ if and only if $n=1$, hence for all cases we have $\Delta_L = \Delta_{H_n^-}$.

The only $\mathbb{F}$-vector space which could include into ${ \widehat{\mathrm{HFL}}(H_{n}^{-}) }$ and have the same graded Euler characteristic is ${ \widehat{\mathrm{HFL}}(H_{n}^{-}) }$ itself, so we conclude $\widehat{\mathrm{HFL}}(L) \cong \widehat{\mathrm{HFL}}(H_{n}^-)$ and $L = H_n^-$ by \cite{BM24}.
\end{proof}
\end{theoremrep}

Let ${ W_{n} }$ denote the link obtained by taking the $n^{th}$ twist knot ${ T_{n} }$, situated in the solid torus, union a meridian ${ \mu }$ of the solid torus. 
We require some lemmas.
\begin{lemma}\label{lem:WnNotConcordant}
For ${ m , n \in \mathbb{Z} \setminus \{4,-5\} }$, the links ${ W_{m} }$ and ${ W_{n} }$ are not strongly smooth concordant to one another.
\begin{proof}
The basic idea in the proof is similar to an argument found in \cite{Tag23}.
Suppose $W_m$ and $W_n$ are strongly smooth concordant for $m\neq n$, this forces the existence of a concordance between the twist knots $T_n$ and 
$T_m$.\footnote{Either the link concordance contains a cylinder cobounded by $T_m$ and $T_n$ or contains two cylinders that show both $T_m$ and $T_n$ are smoothly concordant to the unknot.}
Recall that the branched double cover of $T_n$ is the lens space  $L(4n+1,2n)$ and if $T_n$ and $T_m$ are smoothly concordant then
$\Sigma_2(T_n \# -T_m)\cong L(4n+1,2n) \# -L(4m+1,2m)$ bounds a rational homology ball. 
The main theorem of \cite{Lis07} completely characterizes when this can happen and implies that the only twist knots which are smoothly concordant to one another are
$T_0$ (the unknot) and $T_4$ (the Stevedore knot), and $T_{-5}$ (mirror of Stevedore).
\end{proof}
\end{lemma}

\begin{lemma}\label{lem:ATtwistMinimality}
With the exception of ${ T_{4} }$ (the Stevedore knot, $6_1$) and $T_{-5}$ (its mirror), all twist knots are ribbon concordance minimal.
\begin{proof}
In \cite{AT24}, Corollary 1.4 it is shown that if a 2-bridge knot is smoothly concordant to a torus knot, then it is ribbon concordant to a torus knot. 
Moreover, Theorem 1.3 of \cite{AT24} states that a 2-bridge knot which is not a torus knot is smoothly concordant to a torus knot if and only if it's of the form ${ K(m^{2}n, mnk+\epsilon) }$ where ${ m,k }$ are coprime integers satisfying  ${ m > k > 0 }$ and ${ \epsilon \in \left\{\pm 1\right\} }$. Twist knots have presentation ${T_{r} =  K(2r + 1 , r) }$, so if ${ K_{r} }$ is concordant to a torus knot, then we would have 
$$ 2r + 1 = m^{2} n \;\; \& \;\; r = mnk + \epsilon \qquad \Rightarrow \qquad mn(m -2k)= 2\epsilon +1 $$ Since ${ 2\epsilon + 1 \in \left\{-1,3\right\} }$ and ${ m > k > 0 }$, we conclude that ${ mn(m-2k) = 3 }$ and therefore ${ m = 3, n = 1, k=1 , }$ and ${ \epsilon = 1 }$ so the only twist knot satisfying the hypotheses of Abe and Tagami's theorem is ${ T_4 = K(9,4) = 6_{1}}$ and its mirror. 
\end{proof}
\end{lemma}

\begin{theoremrep}{thm:WhiteheadLinks}
The ${ n }$-twisted Whitehead links ${ W_{n} }$ are ribbon minimal for ${ n \neq 4,-5 }$.
\begin{proof}
In \cite{BD24}, it is shown that ${ \widehat{\mathrm{HFL}} }$ detects ${ W_{n} }$ when ${ n>1 }$ and detects membership in ${ \left\{W_{0}, W_{1}\right\} }$. 
We first note that ${ \overline{W_{n}} = W_{-n-1} }$, hence it suffices to prove ribbon minimality for ${ n \geq 0 }$.
We furthermore note that for any $n$, one has
$$\Delta_{W_n}^{tor} = \Delta_{W_n} = (t_{1}^{1 / 2} - t_{1}^{-1 / 2})(t_{2}^{1 / 2} - t_{2}^{-1 / 2})$$
The argument for minimality splits into two cases depending on the value of ${ n }$.
\vspace{5pt}

\sectionrule
\vspace{5pt}
\noindent
\textbf{Case i:} ${ n \in \left\{0,1\right\} }$ 
\vspace{5pt}

\sectionrule 
\vspace{5pt}
\noindent
We have
\begin{equation}\label{eqn:W0or1}
\widehat{\mathrm{HFL}}(W_{0})\cong \widehat{\mathrm{HFL}}(W_1) \cong
\begin{cases}
& 
\includegraphics[width = 0.25\linewidth]{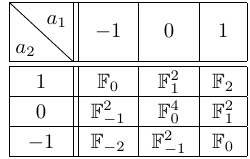}
\end{cases}
\end{equation}
We remark that in the above the Alexander grading $a_1$ corresponds to the twist knot component and $a_2$ corresponds to the unknot component.

In the $n \in \{0,1\}$ case, if $L \leq W_n$ then the ribbon-inclusion $\widehat{\mathrm{HFL}}(L) \hookrightarrow \widehat{\mathrm{HFL}}(W_n)$ must have non-zero Euler characteristic, so we have $0 \neq \Delta_L = \Delta_L^{tor}$.  Gilmer's Theorem (Theorem \ref{thm:GilmersTheorem}) and the Torres condition implies ${ \Delta_{L} \in \left\{1, \Delta_{W_{n}} \right\} }$.
None of the ${ W_{n} }$ are topologically concordant to the Hopf link and therefore, by \cite{Dav06} (Theorem \ref{thm:DavisThm}), are not concordant to any 2-component link with multivariable Alexander polynomial 1. 
Hence ${ \Delta_{L} = \Delta_{W_{n}} }$ and, in particular, the injection ${ \widehat{\mathrm{HFL}}(L) \hookrightarrow \widehat{\mathrm{HFL}}(W_{n}) }$ must preserve the Euler characteristic.
For $n \in \{0,1\}$, the only subspace of $\widehat{\mathrm{HFL}}(W_n)$ having the same Euler characteristic is itself. Theorem 6.2 of \cite{BD24} implies that $L \in \{W_0,W_1\}$. Lemma \ref{lem:WnNotConcordant} tells us that $W_0$ and $W_1$ are not smoothly concordant, so we conclude $L = W_n$.

\vspace{5pt}

\sectionrule
\vspace{5pt}
\noindent
\textbf{Case ii:} ${ n > 1 }$ and ${ n \neq 4 }$ 
\vspace{5pt}

\sectionrule 
\vspace{5pt}
\noindent

When ${ n>1 }$, Binns \& Dey compute $\widehat{\mathrm{HFL}}(W_n)$ to be as in Figure \ref{fig:BinnsDeyWhiteheadLinkHFL} below.

\begin{figure}[ht]
\centering

\begin{subfigure}[t]{0.48\textwidth}
\centering
\includegraphics[width=\textwidth]{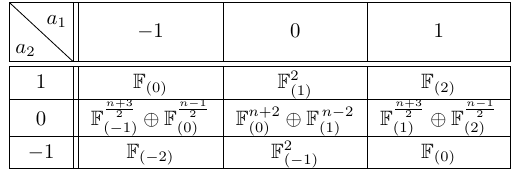}
\caption{$n>1$ odd}\label{fig:W_nOdd}
\end{subfigure}
\hfill
\begin{subfigure}[t]{0.48\textwidth}
\centering
\includegraphics[width=\textwidth]{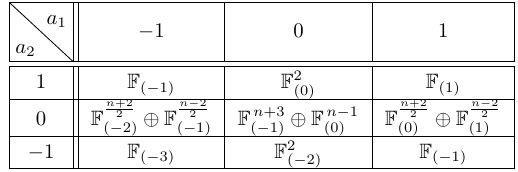}
\caption{$n>1$ even}\label{fig:W_nEven}
\end{subfigure}

\caption{$\widehat{\mathrm{HFL}}(W_n)$ for $n>1$ as computed in \cite{BD24}, Proposition 6.3. We remind the reader that ${ a_{1} }$ is the Alexander grading corresponding to the twist knot component while ${ a_{2} }$ corresponds to the unknot component.}
\label{fig:BinnsDeyWhiteheadLinkHFL}
\end{figure}
We focus on the case where ${ n >1 }$ is odd as the even case follows identically.
The strong ribbon concordance tells us there is a subspace of  ${ \widehat{\mathrm{HFL}}(W_{n},\vec{a}) }$ which is the image of ${ \widehat{\mathrm{HFL}}(L,\vec{a}) }$ under inclusion. 
This subspace is the link Floer homology of a link and so it must possess a symmetry
\begin{equation}\label{eqn:symmetryInGrading} 
\widehat{\mathrm{HFL}}_{m}(L,\vec{a}) \cong \widehat{\mathrm{HFL}}_{m-2\sum_{i}a_{i}  } (L,-\vec{a})  
\end{equation}

It follows that ${ \widehat{\mathrm{HFL}}(L,\vec{a}) }$ will have the form

\begin{equation} \label{eqn:L_concordantWnodd}
\widehat{\mathrm{HFL}}(L, \vec{a})  = \begin{cases} 
\includegraphics[height=2.5cm]{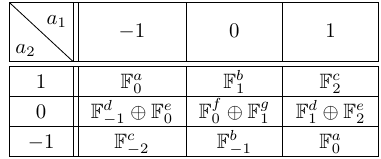}  
\end{cases}
\;\;
{\small  
\begin{aligned}  
&a,c \in \{0,1\}, & b &\in \{0,1,2\}  \\  
0 &\le d \le \tfrac{n+3}{2}, & 0 &\le e \le \tfrac{n-1}{2} \\  
0 &\le f \le n+2, & 0 &\le g \le n-2  
\end{aligned}  
}  
\end{equation} 

Recall that ${ W_{n} =   T_{n} \sqcup \mu}$. By Lemma \ref{lem:ATtwistMinimality}, both components of ${ W_{n} }$ are ribbon concordance minimal, it follows ${ L \leq W_{n} }$ contains a component ${ L' }$ which is isotopic to ${ T_{n} }$ and corresponding to the ${ a_{1} }$-Alexander grading in Equation (\ref{eqn:L_concordantWnodd}).
Using this, we obtain further constraints on ${ \widehat{\mathrm{HFL}}(L,\vec{a}) }$ by means of the ``component removal"
spectral sequence described in Proposition 7.1 of \cite{OS08a}. 
The $E_2$-page of this spectral sequence can be obtained from ${ \widehat{\mathrm{HFL}}(L) }$ by collapsing the ${ a_{2} }$-grading.
From Equation \ref{eqn:L_concordantWnodd}, we obtain
$$ E_2 \cong \begin{cases}  \mathbb{F}_{(0)}^{a} \oplus \mathbb{F}_{(1)}^{d}  \oplus \mathbb{F}_{(2)}^{c+e}   & a_{1}  = 1 \\ \mathbb{F}_{(-1)}^{b} \oplus \mathbb{F}_{(0)}^{f} \oplus  \mathbb{F}_{(1)}^{b+g}    & a_{1}  = 0 \\ \mathbb{F}_{(-2)}^{c}   \oplus \mathbb{F}_{(-1)}^{d} \oplus \mathbb{F}_{(0)}^{a+e}    & a_{1}  = -1    \end{cases} $$

Using the computation of ${ \widehat{\mathrm{HFK}}(T_{n}) }$ for ${ n }$ odd, the $E_{\infty}$-page will be

\begin{equation}\label{eqn:E_inftyPage}
E_{\infty} \cong \widehat{\mathrm{HFK}}(T_{n} ) \otimes (\mathbb{F}_{(-1)} \oplus \mathbb{F}_{(0)}  )  \cong  \begin{cases} \mathbb{F}_{(1)}^{\frac{n+1}{2}} \oplus \mathbb{F}_{(2)}^{\frac{n+1}{2}}   & a_{1}  = 1  \\
 \mathbb{F}_{(0)}^{n} \oplus \mathbb{F}_{(1)}^{n}   & a_{1}  = 0  \\
 \mathbb{F}_{(-1)}^{\frac{n+1}{2}} \oplus  \mathbb{F}_{(0)}^{\frac{n+1}{2}}    & a_{1}  = -1    \end{cases} 
 \end{equation}
Examining ranks in the relevant gradings of $E_2$ and $E_{\infty}$, it is clear that in order for $E_{\infty}$ have the form seen in (\ref{eqn:E_inftyPage}), one must have ${ \widehat{\mathrm{HFL}}(L,\vec{a}) \cong \widehat{\mathrm{HFL}}(W_{n},\vec{a}) }$ and therefore ${ L = W_{n} }$ by \cite{BD24}, Theorem 6.1. 
\end{proof}
\end{theoremrep}

\begin{theoremrep}{thm:componentsOfLConcordantToW4}
Suppose ${ L \leq W_{4} }$. Letting ${ L' }$ be the component of ${ L }$ which corresponds to ${ T_{4} \subset W_{4} }$, ${ L' }$ is either the unknot or ${ T_{4} }$. 
Moreover, if ${ L' }$ is ${ T_{4} }$, then ${ L = W_{4} }$.
\begin{proof}
Specializing Table \ref{fig:W_nEven} to ${ n = 4 }$, the assumption ${ L \leq W_{4} }$ yields an inclusion of subspaces

\begin{equation}\label{eqn:W_4subspace}
\widehat{\mathrm{HFL}}(L, \vec{a}) \hookrightarrow \widehat{\mathrm{HFL}}(W_{4} ) \cong \begin{cases} {\includegraphics[height=2.5cm]{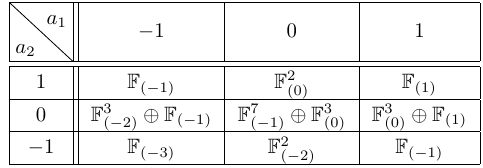}} \end{cases}
\end{equation} 
Note there are subspaces of ${ \widehat{\mathrm{HFL}}(W_{4}) }$ of graded Euler characteristic zero. In the proof of Theorem \ref{thm:WhiteheadLinks} we used minimality of the twist knot component to show such subspaces will never be the injective image of $\widehat{\mathrm{HFL}}(L)$; here we must work slightly harder to show $\Delta_L\neq 0$ by first showing this would force $L$ to be split. The Sato-Levine invariant $\beta$, see \cite{Cha06} for the definition, vanishes on links which are strongly concordant to split links. One can easily show that $\beta(W_4)=1$ and therefore is not strongly concordant to any split link. 

Supposing ${ \Delta_{L} = 0 }$, an examination of (\ref{eqn:W_4subspace}) forces ${ \widehat{\mathrm{HFL}}(L) }$ to be supported exclusively in Alexander gradings $(a_1,0)$ where ${ a_{1} \in \left\{-1,0,1\right\} }$.
Letting ${ Y = S^{3} \setminus \nu L }$, the meridians of ${ L }$ give a basis ${ \{ [\mu_{1}], [\mu_{2}] \} }$ for ${ H_{1}(Y;\mathbb{Z}) }$ and so there is a dual basis ${ \{ h^{(1)} , h^{(2)} \} }$ for ${ H_{2}(Y,\partial Y ; \mathbb{Z}) \cong H^{1}(Y; \mathbb{Z}) }$. 

In \cite{OS08b}, a function on ${ H_{2}(Y,\partial Y) }$ is described using the Alexander gradings of ${ \widehat{\mathrm{HFL}}(L) }$:
$$ y_{L} (h)  = \max \left\{|\langle PD(h) , s \rangle|  \;  \middle|\; s \in \mathbb{H}_L \cong H_{1}(S^{3} \setminus \nu L), \; \widehat{\mathrm{HFL}}(L,s ) \neq 0  \right\}  $$
We write $\chi(h)$ for the maximal Euler characteristic across all embedded surfaces $\Sigma$ such that \newline ${[\Sigma] = h \in H_2(Y, \partial Y)}$.
In \cite{Ni09}, a formula is given relating ${ y_{L}(h) }$ to $\chi(h)$ for any class $h \in H_2(Y,\partial Y)$:
$$ -\chi(h) + \sum\limits_{i=1}^{2} | \langle PD(h) , [\mu_{i} ] \rangle | = 2y_{L} (h)  $$
Note that ${ y_{L}(h^{(2)}) = 0  }$ and by the formula above, we conclude ${ \chi(h^{(2)}) = 1 }$. As pointed out in Remark 1.3 of \cite{Ni09}, this shows that one of the components of ${ L }$ contributes a boundary torus of ${ S^{3} \setminus \nu L}$ which is compressible. Performing disk surgery on this boundary torus, one obtains a splitting sphere separating the two components of ${ L }$, so  ${ \Delta_{L} =0 }$ forces ${ W_{4} }$ to be concordant to a split link but, as mentioned earlier, this is obstructed by the Sato-Levine invariant.
Combined with Gilmer's theorem we conclude $\Delta_{L} = \Delta_{L}^{tor} \in \{1, \Delta_{W_4}\}$. The linking number obstructs $W_4$ from being topologically concordant to the Hopf link, so Davis' theorem implies ${ \Delta_{L} = \Delta_{W_{4}} }$. 

Since ${ \widehat{\mathrm{HFL}}(L) }$ and ${ \widehat{\mathrm{HFL}} (W_{4})}$ have the same graded Euler characteristic, one is forced to have
$$ \forall \vec{a} \notin \left\{(\pm 1, 0), (0,0)\right\} : \qquad  \widehat{\mathrm{HFL}}(L, \vec{a}) = \widehat{\mathrm{HFL}}(W_{4} , \vec{a}) $$
Accounting for the symmetries required of link Floer homologies and the fact that ${ \Delta_{L} = \Delta_{W_{4}} }$, we conclude that there are \textit{a priori} independent constants ${ a \in \left\{2, 3\right\} }$ and ${ b \in \left\{4,5,6,7\right\} }$ such that 

\begin{equation}\label{eqn:W_4subspaceReduced}
\widehat{\mathrm{HFL}}(L, \vec{a})  = \begin{cases} \includegraphics[height=3cm]{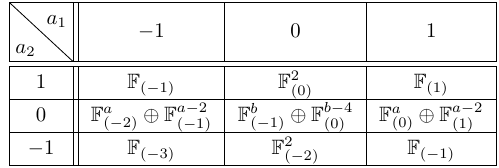} \end{cases}
\end{equation}  

Note that the non-unit factors of  ${ \Delta_{T_{4}} = -(2t-1)(t-2) }$ cannot be the Alexander polynomial of a knot, so Gilmer's theorem tells us that if ${ K \leq T_{4} }$, then ${ \Delta_{K} \in \left\{1 , \Delta_{T_{n}}\right\} }$. The knot Floer homology of ${ T_{4} }$ is 
$$ \widehat{\mathrm{HFK}}(T_{4} , a) \cong \begin{cases} \mathbb{F}_{(1)}^{2} & a = 1  \\
\mathbb{F}_{(0)}^{5}  & a = 0 \\
\mathbb{F}_{(-1)}^{2}  &  a = -1  \end{cases} $$
The only subspace of ${ \widehat{\mathrm{HFK}}(T_{4}) }$ of Euler characteristic one is the knot Floer complex of the unknot, likewise, the only subspace of Euler characteristic ${ \Delta_{T_{4}} }$ is the entire space. 
The knot Floer homology detects the unknot and Baldwin and Sivek \cite{BS25a} show that it likewise detects ${ T_{4} = 6_{1} = P(-3,3,1) }$.  It follows then that any ${ K }$ satisfying ${ U \leq K \leq T_{4} }$ is equal to either ${ U }$ or ${ T_{4} }$. This tells us that the requisite strong ribbon concordance from ${ L }$ to ${ W_{4} }$ connects the component ${ T_{4} \subset W_{4} }$ to a component ${ K \subset L }$ which, as a knot, is isotopic to either the unknot or to another copy of ${ T_{4} }$. Supposing we have ${ K = T_{4} }$, one may use the same argument as that used in the proof of Theorem \ref{thm:WhiteheadLinks} to show that $L$ has the same link Floer homology as $W_4$ and therefore by \cite{BD24} one has $L = W_4$.
\end{proof}
\end{theoremrep}

\section{Questions and conjectures}

By \cite{Ba16} $L$-space knots are ribbon concordance minimal, and alternative proof can also be obtained by combining \cite{BS22} with \cite{BG24}. 
${ L }$-space links behave quite differently from their knot counterparts; they are not necessarily fibered nor strongly quasipositive (see \cite{CL23} for details). 

\begin{question}
Are all ${ L }$-space links ribbon concordance minimal?
\end{question}

We proved earlier that two twisted Whitehead doubles of a ribbon minimal knot are themselves ribbon minimal. It is tempting to conjecture that Whitehead operators preserve minimality. More generally, we ask

\begin{question}
Are there any satellite operators that preserve ribbon minimality? 
\end{question}

Freedman's Theorem states that $\Delta_K=1$ implies $K$ is topologically slice; 
one can also show $\Delta_K=1$ implies $K$ is neither $\mathbb{Q}$-anisotropic nor transfinitely nilpotent,\footnote{Letting ${ G = \pi_{1}(S^{3 } \setminus \nu K) }$ and ${ \Delta_{K} = 1 }$ and ${ K\neq U }$, then this means commutator subgroup of $G$ is perfect and therefore its lower central series is constant. See \cite{Joh23} Proposition 1.1.} so Gordon's criterion cannot address these knots. 
On the other hand, it is reasonable to believe there are $K$ as above which are ribbon minimal. The Conway knot $C = 11n_{34}$ is a particularly conspicuous candidate; it has Alexander polynomial 1 and is the only knot with $\leq 12$ crossings which is topologically slice but not smoothly slice. \cite{Pic20} \cite{DG25} 
\begin{conjecture}\label{conj:ConwayMinimality}
The Conway knot is ribbon minimal.
\end{conjecture}
Computer experimentation reported in \cite{DG25} provides evidence for ${ C }$ being ribbon minimal.
Moreover, if there were some knot $J\leq C$ and $J\neq C$, then the crossing number of $J$ would be bigger than that of $C$. This would furnish a counter-example to a folk conjecture: that $J\leq K$ then the crossing number of $J$ is less than or equal to that of $K$.

By exhausting all currently existing $\widehat{\mathrm{HFK}}$ knot detection results, we obtained three ribbon minimal knots. It is reasonable to wonder if there are underlying geometric phenomena that could lead to an alternative proofs of their minimality.
\begin{question}
Can one prove that ${ Wh^{\pm}(T_{2,3},2) }$ and ${ 15n_{43522} }$ are ribbon minimal without using Heegaard Floer detection? In particular, is there some property, ideally shared with an infinite family, that forces their minimality?
\end{question}

We believe but do not have a proof of either of the following conjectures.
\begin{conjecture}\label{conj:minimalKnotWithMeridian}
Let ${ K }$ be a knot $S^3$ and let ${ \mu_{K} }$ be a meridian of ${ K }$. Then ${L_K = K \sqcup \mu_{K} }$ is a ribbon concordance minimal link if and only if $K$ is ribbon minimal.
\end{conjecture}

If $L \leq L_K$ and $L \neq L_K$, then the hypotheses imply $L$ contains the same knots as components, but the way in which they are linked has been modified and, in particular, the unknot component of $L$ will not be a meridian of the other component. 
We expect a general result resolving 
Conjecture \ref{conj:minimalKnotWithMeridian} to find application in studying minimality of satellite knots.

\begin{conjecture}\label{conj:nonminimalComponent}
There exists a knot $K$ which is not ribbon minimal but is contained in a ribbon minimal link as a component.
\end{conjecture}

In Remark \ref{rem:W4minimality} we advocated for the Stevedore knot as a potential example; its reasonableness as a candidate is proportional to the reasonableness that it bounds finitely many ribbon disks up to isotopy rel. boundary and modulo local knotting. On the other hand, it is shown in Theorem 1.2 of \cite{MM25} that the ``generalized square knots" bound infinitely many ribbon disks up to isotopy rel. boundary and modulo local knotting. We ask the following.

\begin{question}
Are there any knots ${ K }$ such that any link ${ L }$ containing ${ K }$ is not ribbon concordance minimal?
\end{question}

\bibliographystyle{alpha}
\bibliography{bibliography}

@article{Gor81,
  title={Ribbon concordance of knots in the 3-sphere},
  author={Gordon, C McA},
  journal={Mathematische Annalen},
  volume={257},
  number={2},
  pages={157--170},
  year={1981},
  publisher={Springer}
}

@misc{SnapPy,
  author = {Culler, Marc and Dunfield, Nathan M. and Goerner, Matthias and Weeks, Jeffrey R.},
  title  = {SnapPy, a computer program for studying the geometry and topology of 3-manifolds},
  howpublished = {\url{http://snappy.computop.org}},
  note = {Accessed: 2026-06-02}
}

@inproceedings{Dav06,
  title={A two component link with {A}lexander polynomial one is concordant to the Hopf link},
  author={Davis, James F},
  booktitle={Mathematical Proceedings of the Cambridge Philosophical Society},
  volume={140},
  number={2},
  pages={265--268},
  year={2006},
  organization={Cambridge University Press}
}

@article{BS25b,
  title={Ribbon concordance and fibered predecessors},
  author={Baldwin, John A and Sivek, Steven},
  journal={arXiv preprint arXiv:2510.02214},
  year={2025}
}

@article{DLVW22,
  title={Ribbon homology cobordisms},
  author={Daemi, Aliakbar and Lidman, Tye and Vela-Vick, David Shea and Wong, C-M Michael},
  journal={Advances in Mathematics},
  volume={408},
  pages={108580},
  year={2022},
  publisher={Elsevier}
}

@article{DG25,
  title={Ribbon concordances and slice obstructions: experiments and examples},
  author={Dunfield, Nathan M and Gong, Sherry},
  journal={arXiv preprint arXiv:2512.21825},
  year={2025}
}

@article{BHS26,
  title={Ribbon concordance and fibered predecessors, II: the general case},
  author={Baldwin, John A and Hanselman, Jonathan and Sivek, Steven},
  journal={arXiv preprint arXiv:2602.21109},
  year={2026}
}

@article{Joh23,
  title={Residual torsion-free nilpotence, biorderability and pretzel knots},
  author={Johnson, Jonathan},
  journal={Algebraic \& Geometric Topology},
  volume={23},
  number={4},
  pages={1787--1830},
  year={2023},
  publisher={Mathematical Sciences Publishers}
}

@article{How2000,
  title={A short proof of a theorem of {B}rodski{\u\i}},
  author={Howie, James},
  journal={Publicacions Matem{\`a}tiques},
  pages={641--647},
  year={2000},
  publisher={JSTOR}
}

@article{Bro80,
  author  = {Brodski{\u{\i}}, S. D.},
  title   = {Equations over groups and groups with a single defining relation},
  journal = {Russian Mathematical Surveys},
  volume  = {35},
  number  = {4},
  year    = {1980},
  pages   = {165},
  note    = {English translation of Uspekhi Mat. Nauk 35, no. 4 (1980), 183}
}

@article{GL89,
  title={Knots are determined by their complements},
  author={McA, Cameron and Luecke, John},
  journal={Journal of the American Mathematical Society},
  pages={371--415},
  year={1989},
  publisher={JSTOR}
}

@article{Tag23,
  title={Remarks on the minimalities of two-bridge knots in the ribbon concordance poset},
  author={Tagami, Keiji},
  journal={Bull. Belg. Math. Soc. Simon Stevin},
  volume={30},
  pages={317--327},
  year={2023}
}

@article{How81,
  author  = {Howie, James},
  title   = {On pairs of 2-complexes and systems of equations over groups},
  journal = {Journal f{\"u}r die reine und angewandte Mathematik},
  volume  = {324},
  year    = {1981},
  pages   = {165--174},
  doi     = {10.1515/crll.1981.324.165}
}

@article{Lac16,
  title={Elementary knot theory},
  author={Lackenby, Marc},
  journal={arXiv preprint arXiv:1604.03778},
  year={2016}
}

@article{CL23,
  title={Fibered and strongly quasi-positive {L}-space links},
  author={Cavallo, Alberto and Liu, Beibei},
  journal={Michigan Mathematical Journal},
  volume={73},
  number={1},
  pages={209--224},
  year={2023},
  publisher={University of Michigan, Department of Mathematics}
}

@article{OS04a,
  title={Holomorphic disks and genus bounds},
  author={Ozsv{\'a}th, Peter and Szab{\'o}, Zolt{\'a}n},
  journal={Geometry \& Topology},
  volume={8},
  number={1},
  pages={311--334},
  year={2004},
  publisher={Mathematical Sciences Publishers}
}

@article{OS08a,
  title={Holomorphic disks, link invariants and the multi-variable {A}lexander polynomial},
  author={Ozsv{\'a}th, Peter and Szab{\'o}, Zolt{\'a}n},
  journal={Algebraic \& Geometric Topology},
  volume={8},
  number={2},
  pages={615--692},
  year={2008},
  publisher={Mathematical Sciences Publishers}
}

@article{Gab86,
  title={Detecting fibred links in S 3},
  author={Gabai, David},
  journal={Commentarii Mathematici Helvetici},
  volume={61},
  number={1},
  pages={519--555},
  year={1986},
  publisher={Springer}
}

@incollection{Mil68,
  author    = {Milnor, John W.},
  title     = {Infinite cyclic coverings},
  booktitle = {Conference on the Topology of Manifolds},
  editor    = {Hocking, J. G.},
  pages     = {115--133},
  publisher = {Prindle, Weber \& Schmidt},
  address   = {Boston, MA},
  year      = {1968},
  note      = {Conference held at Michigan State University, East Lansing, Michigan, 1967}
}

@article{Ni06,
  title={A note on knot {F}loer homology of links},
  author={Ni, Yi},
  journal={Geometry \& Topology},
  volume={10},
  number={2},
  pages={695--713},
  year={2006},
  publisher={Mathematical Sciences Publishers}
}

@article{AT24,
  title={A generalization of the slice-ribbon conjecture for two-bridge knots and ${t_n}$-move},
  author={Abe, Tetsuya and Tagami, Keiji},
  journal={Tohoku Mathematical Journal},
  volume={76},
  number={4},
  year={2024},
  publisher={Mathematical Institute, Tohoku University}
}

@article{Lobb26,
  title={Khovanov concordance minima and the (4, 5) torus knot},
  author={Lobb, Andrew},
  journal={arXiv preprint arXiv:2602.12692},
  year={2026}
}

@article{Sun26,
  title={Twisted {A}lexander Polynomials, Ribbon Homology Cobordisms, and the {T}hurston Norm},
  author={Sun, Brian},
  journal={arXiv preprint arXiv:2604.20785},
  year={2026}
}

@article{Ba16,
  title={A note on the concordance of fibered knots},
  author={Baker, Kenneth L},
  journal={Journal of Topology},
  volume={9},
  number={1},
  pages={1--4},
  year={2016},
  publisher={Wiley Online Library}
}

@article{HP25,
  title={Ribbon knots and iterated cables of fibered knots},
  author={Hom, Jennifer and Park, JungHwan},
  journal={arXiv preprint arXiv:2507.20455},
  year={2025}
}

@article{Ni09,
  title={Link {F}loer homology detects the {T}hurston norm},
  author={Ni, Yi},
  journal={Geometry \& Topology},
  volume={13},
  number={5},
  pages={2991--3019},
  year={2009},
  publisher={Mathematical Sciences Publishers}
}

@article{Sta65,
  title={Homology and central series of groups},
  author={Stallings, John},
  journal={Journal of Algebra},
  volume={2},
  number={2},
  pages={170--181},
  year={1965},
  publisher={Elsevier}
}

@article{Juh16,
  title={Cobordisms of sutured manifolds and the functoriality of link Floer homology},
  author={Juh{\'a}sz, Andr{\'a}s},
  journal={Advances in Mathematics},
  volume={299},
  pages={940--1038},
  year={2016},
  publisher={Elsevier}
}

@book{JTZ21,
  title={Naturality and mapping class groups in Heegaard Floer homology},
  author={Juh{\'a}sz, Andr{\'a}s and Thurston, Dylan and Zemke, Ian},
  volume={273},
  number={1338},
  year={2021},
  publisher={American Mathematical Society}
}

@book{Ras03,
  title={Floer homology and knot complements},
  author={Rasmussen, Jacob Andrew},
  year={2003},
  publisher={Harvard University}
}

@article{OS04b,
  title={Holomorphic disks and knot invariants},
  author={Ozsv{\'a}th, Peter and Szab{\'o}, Zolt{\'a}n},
  journal={Advances in Mathematics},
  volume={186},
  number={1},
  pages={58--116},
  year={2004},
  publisher={Elsevier}
}

@article{OS08b,
  title={Link {F}loer homology and the {T}hurston norm},
  author={Ozsv{\'a}th, Peter and Szab{\'o}, Zolt{\'a}n},
  journal={Journal of the American Mathematical Society},
  volume={21},
  number={3},
  pages={671--709},
  year={2008}
}

@article{Sil92,
  title={On knot-like groups and ribbon concordance},
  author={Silver, DS},
  journal={Journal of pure and applied algebra},
  volume={82},
  number={1},
  pages={99--105},
  year={1992},
  publisher={Elsevier}
}

@article{Miy18,
  title={A note on genera of band sums that are fibered},
  author={Miyazaki, Katura},
  journal={Journal of Knot Theory and Its Ramifications},
  volume={27},
  number={12},
  pages={1871002},
  year={2018},
  publisher={World Scientific}
}

@article{BM24,
  title={Knot {F}loer homology, link {F}loer homology and link detection},
  author={Binns, Fraser and Martin, Gage},
  journal={Algebraic \& Geometric Topology},
  volume={24},
  number={1},
  pages={159--181},
  year={2024},
  publisher={Mathematical Sciences Publishers}
}

@article{Lis07,
  title={Sums of lens spaces bounding rational balls},
  author={Lisca, Paolo},
  journal={Algebraic \& Geometric Topology},
  volume={7},
  number={4},
  pages={2141--2164},
  year={2007},
  publisher={Mathematical Sciences Publishers}
}

@article{BD24,
  title={{F}loer homology, clasp-braids and detection results},
  author={Binns, Fraser and Dey, Subhankar},
  journal={arXiv preprint arXiv:2405.11224},
  year={2024}
}

@article{MM25,
  title={Slice disks modulo local knotting},
  author={Meier, Jeffrey and Miller, Allison N},
  journal={arXiv preprint arXiv:2503.09870},
  year={2025}
}

@article{Binns25,
  title={Closures of 3-braids and detection},
  author={Binns, Fraser},
  journal={Pacific Journal of Mathematics},
  volume={340},
  number={1},
  pages={1--36},
  year={2025},
  publisher={Mathematical Sciences Publishers}
}

@article{Hem87,
  title={Residual finiteness for 3-manifolds},
  author={Hempel, John},
  journal={Combinatorial group theory and topology (Alta, Utah, 1984)},
  volume={111},
  pages={379--396},
  year={1987}
}

@article{Pic20,
  title={The {C}onway knot is not slice},
  author={Piccirillo, Lisa},
  journal={Annals of Mathematics},
  volume={191},
  number={2},
  pages={581--591},
  year={2020},
  publisher={Department of Mathematics, Princeton University Princeton, New Jersey, USA}
}

@article{GL22,
  title={Khovanov homology and cobordisms between split links},
  author={Gujral, Onkar Singh and Levine, Adam Simon},
  journal={Journal of Topology},
  volume={15},
  number={3},
  pages={973--1016},
  year={2022},
  publisher={Wiley Online Library}
}

@article{Tor53,
  author  = {Torres, Guillermo},
  title   = {On the {A}lexander Polynomial},
  journal = {Annals of Mathematics},
  series  = {Second Series},
  volume  = {57},
  number  = {1},
  pages   = {57--89},
  year    = {1953},
  doi     = {10.2307/1969662}
}

@article{Wal68,
  title={On irreducible 3-manifolds which are sufficiently large},
  author={Waldhausen, Friedhelm},
  journal={Annals of Mathematics},
  volume={87},
  number={1},
  pages={56--88},
  year={1968},
  publisher={JSTOR}
}

@article{DLS26,
  title={Local equivalence and refinements of {R}asmussen's s-invariant},
  author={Dunfield, Nathan M and Lipshitz, Robert and Sch{\"u}tz, Dirk},
  journal={Journal of Topology},
  volume={19},
  number={1},
  pages={e70056},
  year={2026},
  publisher={Wiley Online Library}
}

@article{Bon25,
  title={Positive knots and ribbon concordance},
  author={Boninger, Joe},
  journal={Pacific Journal of Mathematics},
  volume={335},
  number={1},
  pages={81--95},
  year={2025},
  publisher={Mathematical Sciences Publishers}
}

@article{Ghi08,
  title={Knot {F}loer homology detects genus-one fibred knots},
  author={Ghiggini, Paolo},
  journal={American journal of mathematics},
  volume={130},
  number={5},
  pages={1151--1169},
  year={2008},
  publisher={Johns Hopkins University Press}
}

@article{FBW24,
  title={Fixed-point-free pseudo-{A}nosov homeomorphisms, knot {F}loer homology and the cinquefoil},
  author={Farber, Ethan and Reinoso, Braeden and Wang, Luya},
  journal={Geometry \& Topology},
  volume={28},
  number={9},
  pages={4337--4381},
  year={2024},
  publisher={Mathematical Sciences Publishers}
}

@article{Rud93,
  title={Quasipositivity as an obstruction to sliceness},
  author={Rudolph, Lee},
  journal={Bulletin of the American Mathematical Society},
  volume={29},
  number={1},
  pages={51--59},
  year={1993}
}

@article{Gil84,
  title={Ribbon concordance and a partial order on S-equivalence classes},
  author={Gilmer, Patrick M},
  journal={Topology and its Applications},
  volume={18},
  number={2-3},
  pages={313--324},
  year={1984},
  publisher={North-Holland}
}

@misc{KnotJob,
  author       = {Dirk Sch{\"u}tz},
  title        = {KnotJob},
  howpublished = {Computer program},
  note         = {\url{https://www.maths.dur.ac.uk/users/dirk.schuetz/knotjob.html}},
  year         = {2025}
}

@article{BD22,
  title={Cable links, annuli and sutured {F}loer homology},
  author={Binns, Fraser and Dey, Subhankar},
  journal={arXiv preprint arXiv:2207.08035},
  year={2022}
}

@article{HO08,
  title={The {O}zsv{\'a}th-{S}zab{\'o} and {R}asmussen concordance invariants are not equal},
  author={Hedden, Matthew and Ording, Philip},
  journal={American journal of mathematics},
  volume={130},
  number={2},
  pages={441--453},
  year={2008},
  publisher={Johns Hopkins University Press}
}

@inproceedings{BS25a,
  title={{F}loer homology and non-fibered knot detection},
  author={Baldwin, John A and Sivek, Steven},
  booktitle={Forum of Mathematics, Pi},
  volume={13},
  pages={e1},
  year={2025},
  organization={Cambridge University Press}
}

@article{Agol22,
  title={Ribbon concordance of knots is a partial ordering},
  author={Agol, Ian},
  journal={Communications of the American Mathematical Society},
  volume={2},
  number={09},
  pages={374--379},
  year={2022}
}

@article{LZ19,
  title={Khovanov homology and ribbon concordances},
  author={Levine, Adam Simon and Zemke, Ian},
  journal={Bulletin of the London Mathematical Society},
  volume={51},
  number={6},
  pages={1099--1103},
  year={2019},
  publisher={Wiley Online Library}
}

@article{Cha06,
  title={The effect of mutation on link concordance, 3-manifolds, and the Milnor invariants},
  author={Cha, Jae Choon},
  journal={Journal of Knot Theory and Its Ramifications},
  volume={15},
  number={02},
  pages={239--257},
  year={2006},
  publisher={World Scientific}
}

@article{Zem19a,
  title={Knot {F}loer homology obstructs ribbon concordance},
  author={Zemke, Ian},
  journal={Annals of Mathematics},
  volume={190},
  number={3},
  pages={931--947},
  year={2019},
  publisher={JSTOR}
}

@article{KSS82,
  author  = {Kawauchi, Akio and Shibuya, Tetsuo and Suzuki, Shin'ichi},
  title   = {Descriptions on Surfaces in Four-Space. {I}: Normal Forms},
  journal = {Mathematical Seminar Notes, Kobe University},
  volume  = {10},
  number  = {1},
  pages   = {75--125},
  year    = {1982}
}

@article{Mil54,
  author  = {Milnor, John},
  title   = {Link Groups},
  journal = {Annals of Mathematics},
  volume  = {59},
  number  = {2},
  pages   = {177--195},
  year    = {1954},
  doi     = {10.2307/1969685}
}

@article{HKM20,
  title={Isotopies of surfaces in 4--manifolds via banded unlink diagrams},
  author={Hughes, Mark C and Kim, Seungwon and Miller, Maggie},
  journal={Geometry \& Topology},
  volume={24},
  number={3},
  pages={1519--1569},
  year={2020},
  publisher={Mathematical Sciences Publishers}
}

@article{CS93,
  title={Reidemeister moves for surface isotopies and their interpretation as moves to movies},
  author={Carter, J Scott and Saito, Masahico},
  journal={Journal of Knot Theory and its Ramifications},
  volume={2},
  number={03},
  pages={251--284},
  year={1993},
  publisher={World Scientific}
}

@article{Zem19b,
  title={Link cobordisms and absolute gradings on link {F}loer homology},
  author={Zemke, Ian},
  journal={Quantum Topology},
  volume={10},
  number={2},
  pages={207--323},
  year={2019}
}

@article{BG24,
  title={Special Alternating Knots Are Band Prime},
  author={Boninger, Joe and Evan Greene, Joshua},
  journal={International Mathematics Research Notices},
  volume={2024},
  number={10},
  pages={8758--8763},
  year={2024},
  publisher={Oxford University Press}
}

@article{BS22,
  title={{L}-space knots are fibered and strongly quasipositive},
  author={Baldwin, John A and Sivek, Steven},
  journal={Open Book Series},
  volume={5},
  number={1},
  pages={81--94},
  year={2022},
  publisher={Mathematical Sciences Publishers}
}

\end{document}